\numberwithin{equation}{section}
\def\Xint#1{\mathchoice
	{\XXint\displaystyle\textstyle{#1}}%
	{\XXint\textstyle\scriptstyle{#1}}%
	{\XXint\scriptstyle\scriptscriptstyle{#1}}%
	{\XXint\scriptscriptstyle\scriptscriptstyle{#1}}%
	\!\int}
\def\XXint#1#2#3{{\setbox0=\hbox{$#1{#2#3}{\int}$}
		\vcenter{\hbox{$#2#3$}}\kern-.5\wd0}}
\newcommand{\R}{{\mathbb R}}
\newcommand{\Z}{{\mathbb Z}}
\newcommand{\T}{{\mathbb T}}
\newcommand{\e}{{\epsilon}}
\newcommand{\loc}{{\mathrm{loc}}}
\newcommand{\I}{{\mathrm{id}}}
\DeclareMathOperator{\Glim}{\Gamma-\lim}
\DeclareMathOperator{\Glimsup}{\Gamma-\limsup}
\DeclareMathOperator*{\esssup}{ess\,sup}
\DeclareMathOperator*{\esslim}{ess\,lim}
\DeclareMathOperator{\sign}{sign}
\DeclareMathOperator{\dist}{dist}
\DeclareMathOperator{\supp}{supp}
\DeclareMathOperator{\dv}{div}
\DeclareMathOperator{\tr}{tr}
\DeclareMathOperator{\curl}{curl}
\DeclareMathOperator{\re}{{\mathfrak Re}}
\newtheorem{theorem}{Theorem}[section]
\newtheorem{corollary}[theorem]{Corollary}
\newtheorem{lemma}[theorem]{Lemma}
\newtheorem{proposition}[theorem]{Proposition}
\theoremstyle{definition}
\newtheorem{remark}[theorem]{Remark}
\title{Compensation effects for anisotropic energies of 
two-dimensional unit vector fields}
\date{
\today
}
\author{Lia Bronsard\footnote{Department
of Mathematics and Statistics, McMaster University,
 \ttfamily{bronsard@mcmaster.ca}}
 \and 
 Dmitry Golovaty\footnote{Department of Mathematics, The University of Akron, Akron OH 44325,
\ttfamily{dmitry@uakron.edu}}
  \and 
  Xavier Lamy \footnote{Institut de Mathématiques de Toulouse
  UMR5219, Université de Toulouse, CNRS, 
  F-31062 Toulouse Cedex 9, 
  \ttfamily{xavier.lamy@math.univ-toulouse.fr}  }
  \footnote{Institut Universitaire de France (IUF)}
  \and 
  Peter Sternberg\footnote{Department of Mathematics, Indiana University, Bloomington, IN 47405, \ttfamily{sternber@iu.edu}}
  }
\begin{document}

\maketitle

\begin{abstract}
We study the
highly anisotropic energy of 
two-dimensional unit vector fields given by
\begin{align*}
E_\epsilon(u)=
\int_{\Omega}
(\mathrm{div}\,u)^2 +
\epsilon(\mathrm{curl}\,u)^2\, dx\,,
\quad u\colon\Omega\subset\R^2\to\mathbb S^1\,
\end{align*}
 in the limit $\e\to 0$.
This energy clearly loses control on the full gradient of $u$ as $\e\to 0$,
but,
adapting tools from hyperbolic conservations laws, we show that it still controls derivatives of order 1/2.
In particular,
any bounded energy sequence $E_\e(u_\e)\leq C$
is compact in $W^{s,3}_{\loc}(\Omega)$ for $s<1/2$.
Moreover, this order 1/2 of differentiability  is optimal, in the sense that
any map $u\in W^{1/2,4}(\Omega;\mathbb S^1)$ 
is a limit of a bounded energy sequence.
We also establish compactness of
 boundary traces in $L^1(\partial\Omega)$,
and characterize the $\Gamma$-limit 
 in the simpler case of maps of a single variable and in the case of a thin-film model.
\end{abstract}

\section{Introduction}

We consider a smooth bounded open set $\Omega\subset\R^2$,
and, for $0<\e\ll 1$, 
the highly anisotropic energy functional
\begin{align}\label{eq:Eeps}
E_\e(u)=\int_\Omega (\dv u)^2+\e(\curl u)^2\, dx,\qquad
\text{for } u\in H^1(\Omega;\mathbb S^1)\,.
\end{align}
This is a two-dimensional version of the Oseen-Frank energy for nematic liquid crystals, 
in a regime of very disparate elastic constants \cite{Virga94}.
From the point of view of modelling, 
one of the most relevant questions is to determine the behavior of minimizers (subject to some boundary constraint)
as $\e\to 0$, or, more generally equicoercivity properties of $E_\e$ and its $\Gamma$-limit as $\e\to 0$, 
see \cite[Definition~1.5]{braides}.
Perhaps surprisingly, this seems to be a quite subtle task, 
and we only provide partial answers.

The mathematical study of liquid crystal models 
often relies on the one-constant approximation,
where the elastic part of the energy is of the form $\int |\nabla u|^2\, dx$, 
even though actual liquid crystals have anisotropic elastic properties \cite{degennes}.
It is well-known that this is not just a cosmetic simplification: anisotropic energies present significant mathematical challenges 
with respect to the isotropic case
(see e.g. \cite{HKL86,HKL88} for anisotropic harmonic maps and \cite{CKP13,GMS24,BCS25,BCSvB} for anisotropic Ginzburg-Landau energies).
The effect of very disparate elastic constants has been analyzed in \cite{GSV19,GNSV20,GNS21} 
for models of Ginzburg-Landau type which relax the $\mathbb S^1$-valued constraint,
see also \cite{BMMPS}.

Here we focus on the interplay between the degenerate ellipticity $\e\to 0$ 
and the pointwise constraint $u(x)\in\mathbb S^1$,
and demonstrate that this interplay gives rise to interesting compensation phenomena.
Similar compensations have been observed 
in \cite{IO08}, 
where a bound on the divergence of smooth $\mathbb S^1$-valued vector fields with tangential boundary conditions is enough to provide strong compactness,
as well as in \cite{RS03} where the divergence vanishes in the limit, 
and in the Aviles-Giga problem \cite{ADM99,DKMO01} where the divergence is zero but the $\mathbb S^1$ constraint is relaxed.
This is also reminiscent of the compensated integrability established in \cite{GRS24}
where degenerate ellipticity is coupled with a cone-valued constraint.

Naively, if $u_\e\colon\Omega\to\mathbb S^1$ minimizes the energy $E_\e$ given by \eqref{eq:Eeps} with respect to some boundary conditions,
as $\e\to 0$
 one could expect  a limit map $u_*\colon\Omega\to\mathbb S^1$ to minimize
\begin{align}\label{eq:E0}
E_0(u)=\int_\Omega (\dv u)^2\, dx\,,
\quad
\text{for } u\colon\Omega\to\mathbb S^1\,
\text{ s.t. } \dv u\in L^2(\Omega)\,.
\end{align}
Here $\dv u$ is the distributional divergence of a vector field $u$.
This already raises simple questions which do not have obvious answers:
Is there enough compactness to obtain a limit $u_*$ with values into $\mathbb S^1$ ?
Does $u_*$ still satisfy the same boundary data?
How singular can $u_*$ be ?

With respect to this last question,
it is instructive to consider
maps $u\colon \Omega\to\mathbb S^1$ which are divergence-free : they satisfy $E_0(u)=0$,
hence they are minimizers of \eqref{eq:E0}.
Such maps can be extremely wild: any weak solution of the eikonal equation $|\nabla \varphi|= 1$ a.e. in $\Omega$ provides a divergence-free map $u\colon\Omega\to \mathbb S^1$ given by $u=\nabla^\perp\varphi$,
and general weak solutions of the eikonal equation are very flexible, for instance they can be uniformly close to any $\psi\colon\Omega\to\R$ with $|\nabla\psi|\leq 1$ \cite{DM97}.
Instructed by the compactness results of \cite{ADM99,DKMO01,RS03,IO08}, we may expect that such pathological behavior should be ruled out for limits of bounded energy sequences.
But it is still natural to wonder about two  explicit
 examples of elementary divergence-free singularities.
The first example is that of a
 divergence-free jump
\begin{align}\label{eq:u*jump}
u^{\mathrm{jump}}(x)=u^- \mathbf 1_{x_1<0}
+
u^+ \mathbf 1_{x_1>0},
\qquad 
&
u^+\neq u^-\in\mathbb S^1, 
\\
\quad 
&
(u^+-u^-)\cdot \mathbf e_1 =0,
\nonumber
\end{align}
where $x=(x_1,x_2)$. The second example is that of a divergence-free vortex
\begin{align}\label{eq:u*vortex}
u^{\mathrm{vort}}(x)=\frac{ix}{|x|}=\frac{(-x_2,x_1)}{|x|}.
\end{align}
 Can these be obtained as  limits of minimizers $u_\e$ of $E_\e$ ?
 Do singularities carry an energy cost ?

All these questions are related to 
 determining the $\Gamma$-limit of $E_\e$
 (say, with respect to convergence in the sense of distributions),
 possibly under some restriction on boundary values.
In that respect, a first step is to understand the class of maps with finite $\Glimsup$, that is,
 which maps can be obtained as limits of bounded energy sequences:
\begin{align}\label{eq:Glimsupfinite}
\mathcal S
&
:=
\left\lbrace \Glimsup_{\e\to 0} E_\e <\infty \right\rbrace
\\
&
=
\Big\lbrace u\in L^\infty(\Omega;\R^2)
\colon 
\exists \e_k\searrow 0,
\,
u_{\e_k}\in H^1(\Omega;\mathbb S^1)
\nonumber
\\
&
\hspace{7.3em}
\text{s.t. }
\limsup_{k\to \infty} E_{\e_k}(u_{\e_k}) <\infty
\text{ and }
u_{\e_k}\to u\text{ in }\mathcal D'(\Omega)
\Big\rbrace
\,.
\nonumber
\end{align}
Our main results, to be presented in more detail below, imply
that the $\mathbb S^1$ constraint is preserved in the limit,
as well as boundary data. 
Concerning singularities, we find that
\begin{align*}
 u^{\mathrm{jump}},\;u^{\mathrm{vort}}\notin \mathcal S\,.
\end{align*}
In other words, these singularities cannot be approximated with finite $E_\e$ energy.
Further,
 we establish that
although the functional $E_\e$ in \eqref{eq:Eeps} gives up any control on $\curl u$ as $\e\to 0$,
the $L^2$ control on $\dv u$ coupled with the $\mathbb S^1$ constraint is enough 
to control roughly ``half a derivative'', in the sense of fractional Sobolev $W^{1/2,p}$ regularity:
\begin{align*}
\mathcal S \subset W^{\frac 12 -, 3}_{\loc}(\Omega;\mathbb S^1)\,.
\end{align*}
This half order of differentiation turns out to be optimal, 
in the sense that any map with half a gradient (but higher integrability)
can be approximated by bounded energy sequences:
\begin{align*}
W^{\frac 12 ,4}(\Omega;\mathbb S^1)\subset 
\mathcal S\,.
\end{align*}
Finally, in the simpler cases of a one-dimensional domain and a thin domain, we are able to completely determine the $\Gamma$-limit.
Next we proceed to describe our results more precisely.

\subsection{Compensated regularity: interior estimate}\label{ss:intro_comp}

The first compensation phenomenon we observe is that
energy boundedness provides
$\e$-independent regularity estimates
with a fractional order of derivatives. We begin with a brief review of the relevant function spaces.

\subsubsection{Fractional Sobolev and Besov spaces}

In a smooth bounded open set $\Omega\subset\R^d$,
for a differentiability order $0<s<1$ 
and integrability exponent $p\geq 1$,
fractional differentiability of a function $u\in L^p(\Omega)$ can be measured in terms of the difference quotient
\begin{align*}
\frac{\|D^h u\|_{L^p(\Omega\cap(\Omega-h))}}{|h|^s}\,,\qquad h\in\R^d\setminus \lbrace 0\rbrace\,, 
\end{align*}
where $D^h$ denotes the finite difference operator
\begin{align}\label{eq:Dh}
D^h u(x)=u(x+h)-u(x)\,.
\end{align}
One can consider, for instance,
 the
 fractional Sobolev space $W^{s,p}(\Omega)$,
 which consists of functions $u\in L^p(\Omega)$ such that
\begin{align}
\label{eq:Wsp}
|u|_{W^{s,p}(\Omega)}^p
&
=\int_{\R^d} 
\bigg(
\frac{\|D^h u\|_{L^p(\Omega\cap (\Omega -h))}}{|h|^{s}}
\bigg)^p\, \frac{dh}{|h|^d} <\infty\,.
\end{align}
Note that this coincides with the commonly used Gagliardo seminorm \cite{DNPV12}
\begin{align*}
|u|_{W^{s,p}(\Omega)}^p
=\int_{\Omega\times\Omega} \frac{|u(x)-u(y)|^p}{|x-y|^{sp}}\, \frac{dxdy}{|x-y|^d}\,.
\end{align*}
We will also consider the Besov space $B^{s}_{p,\infty}(\Omega)$
 of functions $u\in L^p(\Omega)$ such that
\begin{align}
 |u|_{B^s_{p,\infty}(\Omega)}
 &
 =\esssup_{h\in\R^d} 
 \frac{\|D^h u\|_{L^p(\Omega\cap (\Omega -h))}}{|h|^{s}} <\infty \,.
 \label{eq:Bsp}
 \end{align}
Note that we have the inclusion
\begin{align*}
B^{s}_{p,\infty}(\Omega)\subset W^{s',p}(\Omega)\qquad\text{for }0 < s' < s < 1,\; p\geq 1\,,
\end{align*}
since
\begin{align*}
|u|_{W^{s',p}(\Omega)}^p
&
=\int_{\R^2} 
\bigg(
\frac{\|D^h u\|_{L^p(\Omega\cap (\Omega -h))}}{|h|^{s'}}
\bigg)^p\, 
\frac{dh}{|h|^{d}}
\\
&
\leq 
|u|_{B^s_{p,\infty}}^p \int_{|h|\leq 1}\frac{dh}{|h|^{d-(s-s')p}}
+2^p\|u\|_{L^p}^p\int_{|h|\geq 1}\frac{dh}{|h|^{d+s'p}}\,.
\end{align*}
Moreover, this embedding is compact, as follows e.g. from \cite[Remark~1, p.233]{triebel83}.

\subsubsection{Interior estimate and compactness}

We prove that the energy controls derivatives of order $1/2$ in $L^3$,
in the sense of the Besov space $B^{1/2}_{3,\infty}$
(and therefore also in $W^{s,3}$ for all $s<1/2$).

\begin{theorem}\label{t:reg_estim}
For any bounded open sets $\Omega'\subset\subset\Omega\subset\R^2$, there exists $C=C(\Omega',\Omega)>0$ such that
\begin{align}\label{eq:reg_estim}
\|D^h u\|_{L^3(\Omega'\cap(\Omega'-h))}
&
\leq C \left(1+\|\dv u\|_{L^2(\Omega)}\right)^{\frac 12} |h|^{\frac 12}\qquad\forall h\in\R^2
\,,
\end{align}
for all $u\in H^1(\Omega;\mathbb S^1)$,
and more generally for all $u$ in the finite $\Glimsup$ set $\mathcal S$ defined in \eqref{eq:Glimsupfinite}.
\end{theorem}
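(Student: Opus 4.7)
My plan rests on a compensation identity enabled by the pointwise $\mathbb S^1$ constraint. Locally, write $u=(\cos\varphi,\sin\varphi)$ so that $\nabla u = u^\perp\otimes\nabla\varphi$, and hence $\dv u = u^\perp\cdot\nabla\varphi$ while $\curl u = u\cdot\nabla\varphi$. A direct calculation then yields, for any smooth $2\pi$-periodic $F$, the identity
\[
\dv\bigl(F(\varphi)\,u + F'(\varphi)\,u^\perp\bigr) = (F+F'')(\varphi)\,\dv u,
\]
in which the unbounded $\curl u$ contributions from $\dv(Fu)$ and $\dv(F'u^\perp)$ cancel. This produces a full family of vector fields $\Phi_F(u)$ whose divergence is uniformly controlled in $L^2(\Omega)$ by $\|F+F''\|_\infty\|\dv u\|_{L^2}$, independent of $\e$.

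To convert this family of divergence bounds into fractional regularity on $u$ itself, I package the identities into a kinetic reformulation. Setting $\chi(x,\theta) := \mathbf{1}_{\{\varphi(x)>\theta\}}$, the entropy identities are equivalent to the transport equation
\[
e^\perp(\theta)\cdot\nabla_x\chi(x,\theta) + \partial_\theta\bigl(\chi(x,\theta)\,\dv u(x)\bigr) = 0
\qquad\text{in }\mathcal D'(\Omega\times\mathbb S^1),
\]
where $e^\perp(\theta) = (-\sin\theta,\cos\theta)$, while the representation $u(x) = e(0) + \int_0^{2\pi}e^\perp(\theta)\chi(x,\theta)\,d\theta$ recovers $u$ as a velocity average of $\chi$. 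The theorem then reduces to a (sharp) velocity-averaging statement of the type pioneered by Lions--Perthame--Tadmor and refined by Jabin--Perthame.

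To execute the averaging, fix $h\in\R^2$ with $|h|=r$ and decompose $\mathbb S^1 = A_\lambda\cup A_\lambda^c$, where $A_\lambda := \{\theta : |e^\perp(\theta)\cdot h|\geq\lambda|h|\}$ is the set of directions transverse to $h$ and $\lambda\in(0,1)$ is a parameter to be optimised. On $A_\lambda$, where the transport direction $e^\perp(\theta)$ has a definite non-zero projection onto $h$, integrate the kinetic equation along $e^\perp(\theta)$-characteristics and apply integration by parts in $\theta$ together with Cauchy--Schwarz in $x$; this yields an $L^2_x$-bound for the $A_\lambda$-contribution to $D^h u$ which is linear in $\|\dv u\|_{L^2}$ and polynomial in $|h|$ and $\lambda$. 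On the resonant set $A_\lambda^c$, of $\theta$-measure $O(\lambda)$, use only the trivial bound $|\chi|\leq 1$, producing a pointwise $L^\infty$-bound proportional to $\lambda$ for the corresponding part of $D^h u$. Reconstructing $D^h u$ as the $e^\perp(\theta)$-weighted $\theta$-integral of $D^h\chi$, and combining the two bounds via the interpolation $\|f\|_{L^3}\leq\|f\|_{L^2}^{2/3}\|f\|_{L^\infty}^{1/3}$ with an optimal choice $\lambda=\lambda(r)$, yields the target estimate \eqref{eq:reg_estim}. A standard cutoff in a finite cover of $\Omega'\subset\subset\Omega$ by simply connected patches handles both the interior localization and the (merely local) existence of the lift $\varphi$.

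Finally, for $u\in\mathcal S$, apply the estimate uniformly to any bounded-energy sequence $u_{\e_k}\to u$ in $\mathcal D'(\Omega)$ and pass to the limit using the lower semicontinuity of the finite-difference seminorm $\|D^h\cdot\|_{L^3(\Omega')}$ under weak-$*$ $L^\infty$ convergence (itself justified by $|u_{\e_k}|=1$). The main analytic obstacle is the averaging step: delicately balancing the $L^2$-gain on $A_\lambda$ against the $L^\infty$-cost on $A_\lambda^c$ so as to produce exactly the optimal $1/2$-exponent in $L^3$ together with the square-root dependence on $\|\dv u\|_{L^2}$ crucially exploits \emph{both} the $L^2$-integrability of $\dv u$ (rather than merely Radon-measure regularity, which would give the weaker exponent $1/3$) and the pointwise $L^\infty$-bound on $u$.
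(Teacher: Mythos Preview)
Your entropy identity and kinetic reformulation are correct and essentially equivalent to the paper's (the paper uses the intrinsic indicator $\chi(s,x)=\mathbf 1_{e^{is}\cdot u(x)>0}$ rather than a lift, which avoids the patching, but this is cosmetic). The substantive divergence is in the averaging step. The paper does \emph{not} use a transverse/resonant splitting of the velocity variable; instead it introduces the bilinear quantity
\[
\Delta^{\varphi,\chi}(x,h)=\tfrac12\int_{\T^2}\varphi(t-s)\,D^h\chi(t,x)\,D^h\chi(s,x)\,\sin(t-s)\,dt\,ds,
\]
proves a ``compensation identity'' $\partial_\tau\Delta^{\varphi,\chi}(x,\tau e_1)=I^\tau+\dv_x A^\tau$ in which $I^\tau,A^\tau$ are controlled by $\dv u$ and $D^h u$ (Lemmas~\ref{l:comp_id}--\ref{l:estim_I}), and then invokes the pointwise coercivity $\Delta^{\varphi_0,\chi}\gtrsim |D^h u|^3$ for a specific $\varphi_0$. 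The $L^3$ exponent and the square-root dependence on $\|\dv u\|_{L^2}$ emerge directly from this nonlinear (quadratic-in-$D^h\chi$) structure.

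Your linear splitting has a real gap at the step ``integrate the kinetic equation along $e^\perp(\theta)$-characteristics'' on $A_\lambda$. The equation gives you $e^\perp(\theta)\cdot\nabla_x\chi$, which controls $D^{h_\parallel}\chi$ with $h_\parallel$ the component of $h$ along $e^\perp(\theta)$; it says nothing about the residual increment $D^{h_\perp}\chi$ in the $e(\theta)$-direction, and on $A_\lambda$ that residual is of size $|h_\perp|\leq |h|$, not small. Nothing in your outline explains how the $\theta$-integration against $e^\perp(\theta)$ annihilates this uncontrolled piece. Classical averaging lemmas that do handle this (via Fourier or via interaction identities) with a right-hand side of the form $\partial_\theta m$, $m\in L^2_x L^\infty_\theta$, are exactly what yields the sharp $B^{1/2}_{3,\infty}$ here, and the paper's compensation identity is precisely such a mechanism; your sketch does not supply an alternative one. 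The interpolation you quote, $\|f\|_{L^3}\le\|f\|_{L^2}^{2/3}\|f\|_{L^\infty}^{1/3}$, also applies to a single function, not to a sum $I_1+I_2$; one can repair this using $|I_1|\le |D^hu|+|I_2|\lesssim 1$, but that is not what you wrote.

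A smaller point: for $u\in\mathcal S$, passing to the limit in the estimate gives the right-hand side with $\liminf_k\|\dv u_{\e_k}\|_{L^2}$, which can exceed $\|\dv u\|_{L^2}$; the paper instead shows that the limit $u$ inherits the entropy chain rule \eqref{eq:chainENT} and hence the kinetic equation (Lemma~\ref{l:kinlim}), so Theorem~\ref{t:reg_kin} applies directly to $u$ with its own divergence.
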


The proof of Theorem~\ref{t:reg_estim} relies on 
notions of entropy productions and a kinetic formulation
adapted from the theory of conservation laws, see \textsection\ref{s:comp_reg}.

By compactness of the embedding $B^{1/2}_{3,\infty}\subset W^{s,3}$ for all $s\in (0,1/2)$,
the compensated regularity estimate
\eqref{eq:reg_estim} 
directly
implies the following strong compactness properties 
for sequences with bounded energy.

\begin{corollary}\label{c:Ws3}
If $(u_\e)\subset H^1(\Omega;\mathbb S^1)$
satisfies
$\limsup_{\e\to 0}E_\e(u_\e)<\infty$,
there exists
\begin{align*}
u\in 
B^{1/2}_{3,\infty,\loc}(\Omega;\mathbb S^1)
\text{ with }
\dv u\in L^2(\Omega)\,,
\end{align*}
and
 a sequence $\e_k\to 0$ such that
\begin{align*}
u_{\e_k} \longrightarrow u\qquad\text{strongly in }W^{s,3}(\Omega';\R^2)\,,
\end{align*}
for all $0\leq s < 1/2$ and $\Omega'\subset\subset\Omega$.
\end{corollary}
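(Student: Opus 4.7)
The plan is to deduce the corollary directly from the uniform estimate in Theorem~\ref{t:reg_estim} together with the compact embedding $B^{1/2}_{3,\infty}\subset W^{s,3}$ for $s<1/2$ recalled in the excerpt, plus a diagonal extraction along a compact exhaustion of $\Omega$.

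First, since $\|\dv u_\e\|_{L^2(\Omega)}^2\leq E_\e(u_\e)\leq C$ and $|u_\e|=1$ a.e., Theorem~\ref{t:reg_estim} applied on any fixed $\Omega'\subset\subset\Omega$ yields a uniform bound
\[
|u_\e|_{B^{1/2}_{3,\infty}(\Omega')} + \|u_\e\|_{L^\infty(\Omega')} \leq C(\Omega',\Omega).
\]
The compact embedding $B^{1/2}_{3,\infty}(\Omega')\subset W^{s,3}(\Omega')$ for $s<1/2$ then produces a subsequence converging strongly in $W^{s,3}(\Omega';\R^2)$. Exhausting $\Omega$ by an increasing sequence of smooth open sets $\Omega_n\subset\subset\Omega_{n+1}\subset\subset\Omega$ with $\bigcup_n\Omega_n=\Omega$ and diagonalizing, I obtain a single subsequence $\e_k\to 0$ and a limit $u$ with $u_{\e_k}\to u$ strongly in $W^{s,3}(\Omega';\R^2)$ for every $\Omega'\subset\subset\Omega$ and every $s\in[0,1/2)$.

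To identify the limit, strong convergence in $W^{s,3}_\loc$ in particular gives strong $L^3_\loc$ convergence, hence pointwise a.e.\ convergence along a further subsequence; this forces $|u|=1$ a.e.\ in $\Omega$, so $u$ is $\mathbb S^1$-valued. To see that $\dv u\in L^2(\Omega)$, I extract a further subsequence such that $\dv u_{\e_k}\rightharpoonup g$ weakly in $L^2(\Omega)$, which is possible by the uniform $L^2$ bound. Since $u_{\e_k}\to u$ in $\mathcal D'(\Omega)$, the distributional identity $\dv u_{\e_k}\to \dv u$ shows $g=\dv u$, so $\dv u\in L^2(\Omega)$. Finally, the Besov regularity of $u$ on each $\Omega'\subset\subset\Omega$ follows from Fatou's lemma along the a.e.\ converging subsequence: for every $h\in\R^2$,
\[
\|D^h u\|_{L^3(\Omega'\cap(\Omega'-h))} \leq \liminf_{k\to\infty}\|D^h u_{\e_k}\|_{L^3(\Omega'\cap(\Omega'-h))}\leq C(\Omega',\Omega)\,|h|^{\frac12},
\]
so $u\in B^{1/2}_{3,\infty}(\Omega';\mathbb S^1)$, and varying $\Omega'$ gives $u\in B^{1/2}_{3,\infty,\loc}(\Omega;\mathbb S^1)$.

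There is essentially no substantive obstacle in this corollary: the entire analytic content has been packaged into Theorem~\ref{t:reg_estim}. The remaining work is the standard bookkeeping of ``uniform bound plus compact embedding plus diagonal extraction'', together with the elementary identifications of the limit (via a.e.\ convergence for the $\mathbb S^1$ constraint, via weak $L^2$ convergence for $\dv u$, and via Fatou for the Besov norm).
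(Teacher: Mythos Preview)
Your proposal is correct and follows exactly the approach the paper indicates: the paper simply states that Corollary~\ref{c:Ws3} ``directly'' follows from Theorem~\ref{t:reg_estim} via the compact embedding $B^{1/2}_{3,\infty}\subset W^{s,3}$, without spelling out the details. You have filled in precisely the standard bookkeeping (diagonal extraction, a.e.\ convergence for the $\mathbb S^1$ constraint, weak $L^2$ compactness for the divergence, Fatou for the Besov bound) that the paper leaves implicit.
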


Let us emphasize that the estimate \eqref{eq:reg_estim} 
is false if we only assume that $u\in H_{\dv}(\Omega;\mathbb S^1)$, that is, $u\in L^2(\Omega;\mathbb S^1)$ with square-integrable distributional divergence $\dv u\in L^2(\Omega)$.
For instance, the pure jump $u^{\mathrm{jump}}$ given in \eqref{eq:u*jump} belongs to 
$H_{\dv}(B_1;\mathbb S^1)$ since $\dv u^{\mathrm{jump}}=0$, 
but not to $B^{1/2}_{3,\infty,\loc}(B_1)$ since $\|D^h u^{\mathrm{jump}}\|_{L^3(B_{1/2})}\sim c |u^+-u^-||h_1|^{1/3}$ as $|h|=|(h_1,h_2)|\to 0$, for some $c>0$.
Hence Theorem~\ref{t:reg_estim} shows in particular that $u^{\mathrm{jump}}$ does not belong to the finite $\Glimsup$ set $\mathcal S$ for $\Omega=B_1$.

\begin{remark}
If $\Omega\subset\R^2$ is simply connected, maps $u\in H^1(\Omega;\mathbb S^1)$ can be lifted to $\varphi\in H^1(\Omega;\R)$ such that $u=e^{i\varphi}$, see e.g. \cite[\textsection 1]{BM21}. In terms of this phase $\varphi$, we have
\begin{align*}
E_\e(u) 
&
=\int_\Omega
\langle A_\e(\varphi)\nabla\varphi , \nabla\varphi \rangle \, dx\,,
\quad 
A_\e (\varphi)
=ie^{i\varphi}\otimes ie^{i\varphi} +\e\, e^{i\varphi}\otimes e^{i\varphi}\in\R^{2\times 2}\,.
\end{align*}
The degenerate ellipticity is seen in the fact that the nonnegative symmetric matrix $A_\e(\varphi)$ has an eigenvalue $\e\to 0$,
in the degenerate eigendirection $e^{i\varphi}$.
If instead we had an energy of the form
\begin{align*}
\widetilde E_\e(\varphi) =\int_\Omega
\langle \tilde A_\e\nabla\varphi , \nabla\varphi \rangle \, dx\,,
\quad 
\tilde A_\e =iv\otimes iv +\e\, v\otimes v\in\R^{2\times 2}\,,
\end{align*}
for some constant vector $v\in\mathbb S^1$, then any integrable function $x\mapsto f(\langle v, x\rangle)$ could arise as a bounded energy limit, simply by taking a smooth approximation $f_j\to f$ and adjusting $\e_j\to 0$.
In particular there would be no regularity estimate for bounded energy limits: from that perspective, the compensation effect shown by Theorem~\ref{t:reg_estim} seems related to the specific  nonlinear dependence of the degenerate direction $e^{i\varphi}$ on $\varphi$.
\end{remark}

\subsubsection{Consequence on the distributional Jacobian}

Above we have  seen that Theorem~\ref{t:reg_estim} implies that the pure jump $u^{\mathrm{jump}}$ given by \eqref{eq:u*jump} is not a bounded energy limit in $\Omega=B_1$, 
because it does not belong to $B^{1/2}_{3,\infty,\loc}(B_1)$.
We cannot invoke the same reasoning to rule out the vortex $u^{\mathrm{vort}}$ given in \eqref{eq:u*vortex} as a possible  bounded energy limit,
because $u^{\mathrm{vort}}$
belongs to 
 $B^{2/3}_{3,\infty}(B_1)\subset B^{1/2}_{3,\infty}(B_1)$ as a consequence of the inequalities  
\begin{align*}
\|D^h u^{\mathrm{vort}}\|_{L^3(B_1)}^3 
&
\leq \int_{2|h|<|x| < 1} |D^h u^{\mathrm{vort}}|^3\, dx  +\int_{|x|<2|h|}|D^h u^{\mathrm{vort}}|^3\, dx 
\\
&
\leq \int_{2|h|<|x|<1}\frac{|h|^3}{(|x|/2)^3}\, dx  +\int_{|x|<2|h|}8\, dx
 \lesssim |h|^2\,.
\end{align*}
However, from Theorem~\ref{t:reg_estim} it does nonetheless follow that $u^{\mathrm{vort}}$ cannot be a bounded energy limit  in $B_1$,
but for a subtler reason: 
the existence of a continuous notion of Jacobian on $W^{1/3,3}(B_1;\mathbb S^1)$, see \cite[Corollary~8.1]{BM21},
which allows one to pass to the limit in the identity $Ju_\e=\det(\nabla u_\e)=0$ satisfied by $u_\e\in H^1(\Omega;\mathbb S^1)$.

Recall that a map $u\in W^{1,1} \cap L^\infty (\Omega;\R^2)$, has a well-defined distributional Jacobian
\begin{align*}
Ju=\frac 12 \curl (u\wedge \nabla u)\,,
\end{align*}
see \cite[\textsection 1.2]{BM21},
 which coincides with $Ju=\det(\nabla u)$ if $u\in W^{1,2}(\Omega;\R^2)$.
In particular we have $Ju=0$ for $u\in W^{1,2}(\Omega;\mathbb S^1)$ because $\nabla u$ has rank one.
And the vortex satisfies
 $Ju^{\mathrm{vort}}=\pi\delta_0$ \cite[Remark~1.9]{BM21}.

For $\mathbb S^1$-valued maps, 
the distributional Jacobian can be extended to the space
$W^{1/p,p}(\Omega;\mathbb S^1)$ for any $p> 1$: 
there exists a continuous map $J\colon W^{1/p,p}(\Omega;\mathbb S^1)\to\mathcal D'(\Omega)$ 
which coincides with the usual Jacobian on $W^{1/p,p}\cap W^{1,1}(\Omega;\mathbb S^1)$, see  \cite[\textsection 8.1]{BM21}.
In particular, since $\mathcal S\subset W^{1/3,3}_{\loc}(\Omega;\mathbb S^1)$ thanks to Theorem~\ref{t:reg_estim},
any map $u\in\mathcal S$ has a well-defined Jacobian $Ju$.

Recall also that maps $v\in H^1(\Omega;\mathbb S^1)$ have a well-defined winding number $\deg(v;\Gamma)\in \Z$ on any smooth curve $\Gamma\subset\Omega$ homeomorphic to a circle, 
which depends only on the homotopy class of $\Gamma$,
see e.g. \cite[\textsection 12]{BM21}. This leads us to the following Corollary whose proof is detailed in \textsection\ref{ss:csq_reg_kin}.

\begin{corollary}\label{c:jac}
If $u$ belongs to the finite $\Glimsup$ set $\mathcal S$ defined in \eqref{eq:Glimsupfinite},
then
 we have that $Ju=0$.
Moreover, $u$ has a well-defined 
winding number $\deg(u;\Gamma)\in\mathbb Z$ along any smooth curve $\Gamma\subset\Omega$ homeomorphic to a circle,
such that 
$\deg(u;\Gamma)=\lim \deg(u_\e;\Gamma)$ for any sequence $u_\e\to u$ in $\mathcal D'(\Omega)$ with $E_\e(u_\e)\leq C$.
\end{corollary}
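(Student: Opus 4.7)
The plan is to pass to the limit in the Jacobian along an appropriately chosen subsequence, and to transfer the convergence of winding numbers by a slicing argument.

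Given a sequence $u_\e\to u$ in $\mathcal D'(\Omega)$ with $E_\e(u_\e)\le C$, Corollary~\ref{c:Ws3} furnishes a subsequence $u_{\e_k}\to u$ strongly in $W^{s,3}_{\loc}(\Omega;\R^2)$ for every $s<1/2$. Each $u_{\e_k}\in H^1(\Omega;\mathbb S^1)$ has $\nabla u_{\e_k}$ valued in tangent lines to $\mathbb S^1$, hence of pointwise rank at most one, so $Ju_{\e_k}=\det(\nabla u_{\e_k})=0$. Taking $s=1/3$ and invoking the continuity of the distributional Jacobian
\begin{align*}
J\colon W^{1/3,3}(\Omega;\mathbb S^1)\longrightarrow\mathcal D'(\Omega)
\end{align*}
from \cite[Corollary~8.1]{BM21}, I pass to the limit in $Ju_{\e_k}=0$ to conclude $Ju=0$.

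For the winding number, fix a smooth Jordan curve $\Gamma\subset\Omega$ and choose a tubular neighborhood $N\subset\subset\Omega$ foliated by smooth parallel curves $\Gamma_r$, $|r|<\delta$, with $\Gamma=\Gamma_0$. Since $Ju_{\e_k}=0$ throughout $N$, Stokes applied to the annulus between $\Gamma_{r_1}$ and $\Gamma_{r_2}$ gives
\begin{align*}
\deg(u_{\e_k};\Gamma)=\deg(u_{\e_k};\Gamma_r)\qquad\text{for every }|r|<\delta.
\end{align*}
Picking $s\in(1/3,1/2)$, the strong convergence $u_{\e_k}\to u$ in $W^{s,3}(N)$ together with the Fubini-type slicing property of fractional Sobolev spaces yields, for a.e. $r\in(-\delta,\delta)$ and along a further subsequence, $u_{\e_k}|_{\Gamma_r}\to u|_{\Gamma_r}$ in $W^{\sigma,3}(\Gamma_r;\R^2)$ for some $\sigma>1/3$. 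Since $3\sigma>1$, $W^{\sigma,3}(\Gamma_r;\mathbb S^1)\hookrightarrow\mathrm{VMO}(\Gamma_r;\mathbb S^1)$, and the VMO continuity of the degree (Brezis--Nirenberg) gives
\begin{align*}
\deg(u_{\e_k};\Gamma)=\deg(u_{\e_k};\Gamma_r)\longrightarrow \deg(u|_{\Gamma_r})\in\mathbb Z.
\end{align*}
As an integer sequence, $\deg(u_{\e_k};\Gamma)$ is eventually constant, and I define $\deg(u;\Gamma)$ to be this value; it depends neither on $r$ nor on the chosen subsequence since $u|_{\Gamma_r}$ is determined by $u$. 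Because every subsequence of the integers $\deg(u_\e;\Gamma)$ admits, by the same extraction, a further subsequence converging to $\deg(u;\Gamma)$, the whole sequence converges.

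The main obstacle is the slicing step at the borderline exponent: the standard trace theorem $W^{s,3}\to W^{s-1/3,3}$ is too lossy to reach the VMO range $3\sigma\ge 1$, so one must invoke a genuine Fubini/slicing statement for fractional Sobolev spaces, or equivalently the continuity of the Brezis--Mironescu degree on Jordan curves for maps in $W^{1/p,p}(\Omega;\mathbb S^1)$ with $p>1$; once this is granted, the remaining steps are routine.
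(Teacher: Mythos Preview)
Your argument is correct and, for $Ju=0$, identical to the paper's: both invoke Corollary~\ref{c:Ws3} and the continuity of $J$ on $W^{1/3,3}(\Omega;\mathbb S^1)$ from \cite[Corollary~8.1]{BM21}. For the winding number the two proofs are also close in spirit---both take a tubular neighborhood, use a Fubini-type slicing of $W^{s,3}$ (not the lossy trace theorem), and pass to the limit on generic slices via the embedding $W^{s,3}(\Gamma_r)\hookrightarrow C^0(\Gamma_r)$ for $s>1/3$. The one genuine difference is how $\deg(u;\Gamma)$ is \emph{defined}. You set it to be the common VMO degree of $u|_{\Gamma_r}$ on good slices, and deduce its independence of $r$ a posteriori from the fact that it arises as $\lim\deg(u_{\e_k};\Gamma)$. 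The paper instead invokes the factorization theorem \cite[Corollary~14.3]{BM21}: since $u\in W^{3/7,3}_{\loc}(\Omega;\mathbb S^1)$ with $Ju=0$, one can write $u=e^{i\varphi}v$ with $v\in C^\infty(\Omega;\mathbb S^1)$ and $\varphi\in W^{2/5,3}_{\loc}(\Omega;\R)$, so that $\deg(u;\Gamma):=\deg(v;\Gamma)$ is intrinsically defined by a smooth map and its independence of $r$ and of the approximating sequence is immediate. Your route avoids the factorization lemma at the cost of a slightly more delicate well-definedness check; the paper's route is cleaner but imports an extra structural result from \cite{BM21}.
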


In particular, the vortex $u^{\mathrm{vort}}$ defined in \eqref{eq:u*vortex}, which satisfies $Ju^{\mathrm{vort}}=\delta_0\neq 0$, does not belong to $\mathcal S$ in $\Omega=B_1$.

\subsection{Convergence of traces}

The interior estimate of Theorem~\ref{t:reg_estim} does not provide any control on boundary data.
For vector fields $u\in H^1(\Omega;\mathbb S^1)$ which are tangent at the boundary ($u\cdot\nu=0$ on $\partial\Omega$, where $\nu$ is a unit normal on $\partial\Omega$) and sufficiently smooth domains $\Omega$,
an estimate up to the boundary,
\begin{align*}
|u|^2_{B^{1/2}_{3,\infty}(\Omega)}\lesssim 1+\|\dv u\|_{L^2(\Omega)}\,,
\end{align*}
can actually be inferred from Theorem~\ref{t:reg_estim} via a reflection argument coupled with the use of conformal coordinates (to flatten the boundary while keeping control on the divergence).
See \cite{BMMPS} for related work.
But for general maps $u\in H^1(\Omega;\mathbb S^1)$ 
 it is not clear to us whether such a global estimate is valid.
Nevertheless, we are able to show compactness of traces for bounded energy sequences.

\begin{theorem}\label{t:trace}
Let $\Omega\subset\R^2$ be a bounded $C^2$ open set.
For any family  of maps $u_\e\colon\Omega\to\R^2$ such that 
$\limsup_{\e\to 0}E_\e(u_\e)<\infty$,
the limit $u_*$ provided by Corollary~\ref{c:Ws3}
has a strong $L^1$ trace on $\partial\Omega$,
and $\tr(u_{\e_k})\to\tr(u_*)$ strongly in $L^p(\partial\Omega)$ for all $1\leq p<\infty$.
\end{theorem}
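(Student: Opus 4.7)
The strategy is to combine the interior compactness of Corollary~\ref{c:Ws3} with a tangential equicontinuity estimate valid up to the boundary, and then conclude via the Kolmogorov--Riesz criterion. Using a $C^2$ partition of unity and local flattening of $\partial\Omega$, it suffices to prove strong $L^p$ compactness of $\tr(u_\e)$ on the arc $\{|x_1|<1,\,x_2=0\}$ when $\Omega$ locally coincides with the half-disk $Q = B_1 \cap \{x_2>0\}$; the $C^2$ change of coordinates preserves the $L^2$ bound on the divergence and the $\mathbb S^1$ constraint up to bounded factors.

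The key new ingredient is a boundary version of Theorem~\ref{t:reg_estim} allowing tangential translations up to the flat boundary,
\begin{equation*}
\int_Q |u_\e(x+h_1 e_1) - u_\e(x)|^3\, dx \lesssim |h_1|^{3/2}, \qquad |h_1|\ll 1,
\end{equation*}
uniformly in $\e$. I would establish this by choosing the cutoff functions in the kinetic/entropy proof of Theorem~\ref{t:reg_estim} to depend only on the normal variable $x_2$, thereby preserving translation invariance in the tangential direction $x_1$. Averaging $u_\e$ in the normal direction via $V^\delta_\e(x_1) := \delta^{-1}\int_0^\delta u_\e(x_1, x_2)\, dx_2$ and applying Minkowski's inequality then yields
\[
\|V^\delta_\e(\cdot+h_1) - V^\delta_\e(\cdot)\|_{L^3(|x_1|<1)} \lesssim |h_1|^{1/2},
\]
uniformly in $\e$ and $\delta>0$. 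For each fixed $\e$, $V^\delta_\e \to \tr(u_\e)$ in $L^2(|x_1|<1)$ as $\delta\to 0$ by standard $H^1$ trace theory, and extracting a weakly $L^3$-convergent subsequence of the difference quotients transfers the estimate to the traces:
\[
\|\tr(u_\e)(\cdot+h_1) - \tr(u_\e)(\cdot)\|_{L^3(|x_1|<1)} \lesssim |h_1|^{1/2},
\]
uniformly in $\e$. Combined with the uniform $L^\infty$ bound $|u_\e|=1$, Kolmogorov--Riesz gives precompactness of $\{\tr(u_{\e_k})\}$ in $L^p(\partial\Omega)$ for every $1\leq p<\infty$.

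To identify the resulting limit $v$ with a trace of $u_*$, I would apply Gauss--Green to fix the normal component: for any $\phi \in C^1(\bar\Omega)$,
\[
\int_{\partial\Omega} (\tr(u_{\e_k})\cdot\nu)\,\phi\,ds = \int_\Omega \phi\,\dv u_{\e_k} + u_{\e_k}\cdot\nabla\phi\,dx,
\]
and passing $k\to\infty$ (using $u_{\e_k}\to u_*$ in $L^3_\loc(Q)$ and $\dv u_{\e_k}\rightharpoonup \dv u_*$ weakly in $L^2$) identifies the normal component of $v$ with the distributional normal trace of $u_*$. The tangential component is then determined up to sign by $|v|=1$, and the sign is pinned down by strong interior convergence on slices $\{x_2=r\}$ via a diagonal argument in $(k,\delta,r)$. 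The principal obstacle is the adaptation of the kinetic/entropy proof of Theorem~\ref{t:reg_estim} so that the cutoffs can be taken purely as functions of the normal variable, enabling the interior estimate to propagate up to the flat boundary; the remaining compactness and identification steps are then more routine.
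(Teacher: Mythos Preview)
Your proposal hinges on a ``boundary version of Theorem~\ref{t:reg_estim}'' giving
\[
\int_Q |D^{h_1 e_1}u_\e|^3\,dx \lesssim |h_1|^{3/2}
\]
uniformly up to the flat boundary $\{x_2=0\}$, obtained by letting the cutoff $\eta$ in the proof of Theorem~\ref{t:reg_kin} depend only on the normal variable $x_2$. This step does not go through. In the compensation identity \eqref{eq:comp_id} one integrates $\dv_x A^\tau$ against $\eta^2$; if $\eta$ has compact support in $\{x_2>0\}$ then it must vanish near $x_2=0$, and your estimate is again purely interior with a constant scaling like $\|\nabla\eta\|_\infty\sim 1/\dist(\supp\eta,\partial\Omega)$. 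If instead you take $\eta(0)>0$ so that the estimate reaches the boundary, integrating $\partial_2 A_2^\tau\,\eta^2$ produces a boundary term $\int A_2^\tau(x_1,0)\,\eta^2\,dx_1$, and by Lemma~\ref{l:estim_A} this is of size $\int |D^{\tau e_1}u_\e|(x_1,0)\,dx_1$ --- precisely a trace quantity you have no a~priori control on. The paper itself remarks, just after the statement of Theorem~\ref{t:trace}, that a global estimate of this kind is available (via reflection) only under the tangential condition $u\cdot\nu=0$, and that ``for general maps $u\in H^1(\Omega;\mathbb S^1)$ it is not clear to us whether such a global estimate is valid.'' Your proposed modification does not resolve this.

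A secondary gap is the identification of the tangential component of the limiting trace. Fixing $v\cdot\nu$ via Gauss--Green and then invoking $|v|=1$ determines $v\cdot\tau$ only up to sign, and ``pinning down the sign by strong interior convergence on slices $\{x_2=r\}$ via a diagonal argument'' presupposes that $u_*(\cdot,r)\to v$ in $L^1$ as $r\to 0$, i.e.\ that $u_*$ has a strong trace --- which is exactly what needs to be proved.

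The paper takes a completely different route that avoids any up-to-boundary regularity. It first shows (Proposition~\ref{p:tracelim}, adapting Vasseur's blow-up argument for kinetic equations) that the limit $u_*$ itself has a strong $L^1$ trace on $\partial\Omega$. Then, rather than controlling tangential oscillations of $\tr(u_\e)$ directly, it applies Gauss--Green not just to $u_\e$ but to the whole family $\Phi(u_\e)$, $\Phi\in\mathrm{ENT}$: since $\dv\Phi(u_\e)=\lambda_\Phi(u_\e)\dv u_\e\rightharpoonup\lambda_\Phi(u_*)\dv u_*$ weakly, one obtains $\langle\nu,\Phi(\tr(u_\e))\rangle\to\langle\nu,\Phi(\tr(u_*))\rangle$ in $\mathcal D'(\partial\Omega)$ for every entropy $\Phi$. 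Testing this against the entropies $\Phi_g$ of \eqref{eq:Phi_g} shows that any Young measure generated by $\tr(u_\e)$ on $\partial\Omega$ is the Dirac mass at $\tr(u_*)(x)$, which forces strong $L^1$ (hence $L^p$) convergence. The richness of the entropy family is what replaces the missing boundary estimate.
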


Note that vector fields with $L^2$ divergence
always have a weak normal trace which ensures the validity of the Gauss-Green formula, see e.g. \cite{CF99}, 
but this weak normal trace may not be attained in any pointwise sense \cite[Example~2.7]{LS22},
and tangential components may  not even have a weak trace \cite[Remark~2.4]{CF99}. 
Here we really take advantage of the fact that $u_*\in\mathcal S$ to define its strong trace, see Proposition~\ref{p:tracelim}.
As for Theorem~\ref{t:reg_estim}, the proof of Theorem~\ref{t:trace}
relies on ideas from the theory of conservation laws,
 in particular from \cite{CF99} and \cite{vasseur01}.

\subsection{Recovery sequences}\label{ss:intro_recov}

Thanks to Theorem~\ref{t:reg_estim}, if  a map $u\colon\Omega\to\mathbb S^1$ is a limit of maps $u_\e$ with bounded energy, 
then it must have
a certain fractional regularity of order 1/2, namely
 $B^{1/2}_{3,\infty,\loc}(\Omega)$.
Conversely,
it is natural to ask which sufficient conditions on $u$ ensure that it can be approximated  with bounded energy.
In other words, we would like to identify large subsets of the finite $\Glimsup$ set $\mathcal S$ defined in 
\eqref{eq:Glimsupfinite}.
We are only able to obtain an incomplete converse to
 Theorem~\ref{t:reg_estim},
which does however confirm the optimality of the order $1/2$ of differentiability: 
any map in $W^{1/2,4}(\Omega)$, see  \eqref{eq:Wsp}, satisfies 
the condition \eqref{eq:suffcondrecov} and, 
according to our next main result, 
belongs therefore to $\mathcal S$.

\begin{theorem}\label{t:recovery2D}
Let $\Omega\subset\R^2$ be a $C^3$
open set,
and $u\colon\Omega\to\mathbb S^1$ possess square-integrable distributional divergence $\dv u\in L^2(\Omega)$, and such that 
\begin{align}\label{eq:suffcondrecov}
 \Xint{-}_{B_\delta} \bigg(\frac{\|D^h u\|_{L^4(\Omega\cap(\Omega-h))}}{\delta^{1/2}}\bigg)^4 \, dh \longrightarrow 0
\qquad\text{ as }\delta\to 0\,,
\end{align}
where $D^h$ is the finite difference operator defined in \eqref{eq:Dh}
and $\Xint{-}$ denotes integral average. 
Then there exists $(u_\e)\subset H^1(\Omega;\mathbb S^1)$ such that 
\begin{align*}
u_\e\to u\text{ in }L^2(\Omega;\R^2)
\quad\text{and }
E_\e(u_\e)\to\int_\Omega (\dv u)^2\, dx,
\end{align*}
as $\e\to 0$.
\end{theorem}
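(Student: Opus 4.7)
The plan is to construct $u_\e$ by mollifying $u$ at scale $\delta$, renormalizing onto $\mathbb S^1$, and then setting $u_\e:=u_{\delta(\e)}$ for a rate $\delta(\e)\to 0$ slow with respect to $\e$. Using the $C^3$ regularity of $\partial\Omega$, extend $u$ first to a neighborhood $\tilde\Omega\supset\overline\Omega$ (for instance by reflection in Fermi coordinates), preserving the $\mathbb S^1$-valuedness, the integrability $\dv u\in L^2(\tilde\Omega)$, and condition \eqref{eq:suffcondrecov}. For a standard radial smooth mollifier $\rho$, set $\tilde u_\delta:=u*\rho_\delta$. Then $\dv\tilde u_\delta=(\dv u)*\rho_\delta\to\dv u$ strongly in $L^2(\Omega)$, and writing $\nabla\tilde u_\delta(x)=\int\nabla\rho_\delta(z)\bigl(u(x-z)-u(x)\bigr)\,dz$, Minkowski in $L^4(\Omega)$ together with H\"older in $z\in B_\delta$ and \eqref{eq:suffcondrecov} yield
\[
\|\nabla\tilde u_\delta\|_{L^4(\Omega)}^4\lesssim \delta^{-2}\Xint{-}_{B_\delta}\bigl(\|D^z u\|_{L^4}/\delta^{1/2}\bigr)^4\,dz=o(\delta^{-2}),
\]
hence $\|\nabla\tilde u_\delta\|_{L^2(\Omega)}^2=o(\delta^{-1})$.

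The decisive compensation comes from the fact that the constraint $|u|\equiv 1$ turns $1-|\tilde u_\delta|^2$ into a variance,
\[
1-|\tilde u_\delta(x)|^2=\int\rho_\delta(x-y)|u(y)-\tilde u_\delta(x)|^2\,dy,
\]
and differentiating, while using the vanishing of the first moment $\int\rho_\delta(x-y)(u(y)-\tilde u_\delta(x))\,dy=0$, produces
\[
\tilde u_\delta(x)\cdot\partial_i\tilde u_\delta(x)=-\tfrac12\int(\partial_i\rho_\delta)(x-y)|u(y)-\tilde u_\delta(x)|^2\,dy.
\]
The same Minkowski--H\"older scheme as above then gives
\[
\|\tilde u_\delta\cdot\nabla\tilde u_\delta\|_{L^2(\Omega)}^2\lesssim \Xint{-}_{B_\delta}\bigl(\|D^z u\|_{L^4}/\delta^{1/2}\bigr)^4\,dz=o(1),
\]
and analogously $\|1-|\tilde u_\delta|\|_{L^r(\Omega)}=o(\delta)$ for $r\in\{1,2\}$---a dramatic improvement over the bounds on $\nabla\tilde u_\delta$ and on $\tilde u_\delta-u$ alone.

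On the good set $G_\delta:=\{|\tilde u_\delta|\geq 1/2\}$, whose complement has measure $o(\delta^2)$ by Chebyshev, define $u_\delta:=\tilde u_\delta/|\tilde u_\delta|$; on the bad set $\Omega\setminus G_\delta$ we patch in a local $\mathbb S^1$-valued extension with gradient of size $O(\delta^{-1})$, contributing $O(1)$ to $\|\nabla u_\delta\|_{L^2}^2$ and $o(1)$ to $\|\dv u_\delta\|_{L^2}^2$ (using the area $o(\delta^2)$). From the product rule
\[
\dv u_\delta=\frac{\dv\tilde u_\delta}{|\tilde u_\delta|}-\frac{u_\delta\cdot\nabla|\tilde u_\delta|}{|\tilde u_\delta|}
\]
on $G_\delta$, the first term tends to $\dv u$ in $L^2$ via $\|\dv\tilde u_\delta\|_{L^\infty}\lesssim\delta^{-1}\|\dv u\|_{L^2}$ paired with $\|1-|\tilde u_\delta|\|_{L^2}=o(\delta)$, and the second vanishes via $\|\nabla|\tilde u_\delta|\|_{L^2}\leq 2\|\tilde u_\delta\cdot\nabla\tilde u_\delta\|_{L^2}=o(1)$. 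Since $\|\nabla u_\delta\|_{L^2}^2\lesssim\|\nabla\tilde u_\delta\|_{L^2}^2=o(\delta^{-1})$, choosing $\delta(\e)\to 0$ with $\e/\delta(\e)\to 0$ makes $\e\|\nabla u_\delta\|_{L^2}^2\to 0$, so that $u_\e:=u_{\delta(\e)}$ is the required recovery sequence (and also converges to $u$ in $L^2$ by the same ingredients).

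The main obstacle is the compensation estimate $\|\tilde u_\delta\cdot\nabla\tilde u_\delta\|_{L^2}=o(1)$, only possible through the $\mathbb S^1$-constraint via the variance identity, and contrasting sharply with $\|\nabla\tilde u_\delta\|_{L^2}=o(\delta^{-1/2})$: without it the renormalization would introduce uncontrollable errors in $\dv u_\delta$. A secondary, technically delicate point is the $\mathbb S^1$-valued extension on the bad set $\Omega\setminus G_\delta$, requiring topological triviality of the boundary traces on $\partial G_\delta$ in order to admit an $H^1$ extension into $\mathbb S^1$ with controlled energy---this is expected under \eqref{eq:suffcondrecov} since this level of fractional regularity rules out vortex-type singularities.
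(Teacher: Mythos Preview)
Your central compensation ingredient---the variance identity
$1-|\tilde u_\delta|^2=\int\rho_\delta(x-y)|u(y)-\tilde u_\delta(x)|^2\,dy$
and the consequent estimate $\|\tilde u_\delta\cdot\nabla\tilde u_\delta\|_{L^2}=o(1)$---is exactly right and matches the paper's Lemma~\ref{l:udelta}. The gradient bound $\|\nabla\tilde u_\delta\|_{L^2}^2=o(\delta^{-1})$ and the choice $\e/\delta(\e)\to 0$ are also correct.

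There are, however, two genuine gaps.

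\medskip
\textbf{The reflection extension does not preserve $\dv u\in L^2$.} In a half-plane, even reflection of $u$ keeps $u$ continuous across the boundary (so no singular part in $\dv u$), but on the reflected side the pointwise divergence becomes $\partial_1 u_1-\partial_2 u_2$, which is not controlled by the hypothesis. Reflecting the normal component oddly keeps the pointwise divergence in $L^2$ on each side, but produces a jump $[u\cdot\nu]=2\,u\cdot\nu$ on $\partial\Omega$, hence a singular measure in the distributional divergence. Either way, $\dv\tilde u_\delta=(\dv u)*\rho_\delta$ fails to converge to $\dv u$ in $L^2(\Omega)$ near the boundary. The paper avoids extension altogether: it mollifies only on $\Omega_\delta=\{\dist(\cdot,\partial\Omega)>\delta\}$, and then pulls the projected map $v_\delta=u_\delta/|u_\delta|$ back to $\Omega$ by a $C^2$ diffeomorphism $\Psi_\delta\colon\Omega\to\Omega_\delta$ with $|D\Psi_\delta-I|\lesssim\delta$ (a dilation of the boundary collar). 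The divergence error this introduces is bounded by $\delta\int_{\Omega_\delta}|\nabla v_\delta|^2\,dx=o(1)$, precisely the estimate you also obtained.

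\medskip
\textbf{The bad-set patching is unnecessary, and unjustified as written.} A slightly sharper use of Jensen in the variance identity yields the \emph{pointwise} bound
\[
\bigl|1-|\tilde u_\delta(x)|^2\bigr|^2\le\iint|u(y)-u(z)|^4\rho_\delta(x-y)\rho_\delta(x-z)\,dy\,dz\le\frac{1}{\delta^4}\int_{B_{2\delta}}\|D^h u\|_{L^4}^4\,dh=o(1),
\]
uniformly in $x$. Hence $|\tilde u_\delta|\ge 1/2$ everywhere for small $\delta$, and $G_\delta$ is the whole domain. This also simplifies your treatment of $(\dv\tilde u_\delta)/|\tilde u_\delta|-\dv\tilde u_\delta$: you can simply use $\|\dv u\|_{L^2}\sup\bigl|1-|\tilde u_\delta|\bigr|\to 0$ instead of the $L^\infty\times L^2$ pairing. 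Your proposed workaround---an $H^1(\Omega\setminus G_\delta;\mathbb S^1)$ extension with gradient $O(\delta^{-1})$---would require matching the trace of $u_\delta$ on $\partial G_\delta$, and your appeal to ``topological triviality under \eqref{eq:suffcondrecov}'' is not a proof; fortunately it is not needed.
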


The recovery sequence $u_\e$ 
will be constructed by convolving $u$ with a smoothing kernel
and projecting it onto $\mathbb S^1$.
Convolution commutes with taking the divergence, 
but projection onto $\mathbb S^1$ does not:
the main step in the proof  of Theorem~\ref{t:recovery2D} is to estimate the extra error thus introduced,
 by adapting commutator estimates from \cite{CET94,DLI15}.

From Theorems~\ref{t:reg_estim} and \ref{t:recovery2D},
we infer that the finite $\Glimsup$ set $\mathcal{S}$ given by
\eqref{eq:Glimsupfinite}  satisfies the inclusions
\begin{align*}
W^{\frac 12,4}(\Omega;\mathbb S^1) \subset 
\mathcal S
\subset
B^{\frac 12}_{3,\infty,\loc}(\Omega;\mathbb S^1)\subset W_{\loc}^{\frac 12 -,3}(\Omega;\mathbb S^1)\,.
\end{align*}
In fact, 
for
$u\in W^{\frac 12,4}(\Omega;\mathbb S^1)$
or more generally $u$ satisfying \eqref{eq:suffcondrecov}, the $\Gamma$-limit of $E_\e$ at $u$ 
(see e.g. \cite[Definition~1.5]{braides})
 is completely determined, 
\begin{align*}
\Big(\Glim_{\e\to 0} E_\e \Big) (u) =\int_\Omega (\dv u)^2\, dx\,.
\end{align*}
Indeed,  the upper bound is covered by Theorem~\ref{t:recovery2D},
and the lower bound follows from 
the lower semicontinuity of 
$\int (\dv u_\e)^2\, dx$ 
with respect to weak $L^2$ convergence of $\dv u_\e$.
This leaves the determination of the $\Gamma$-limit open 
in the set of 
 maps $u\in B_{3,\infty,\loc}^{1/2}(\Omega;\mathbb S^1)$ which, recalling Corollary~\ref{c:jac}, satisfy $Ju=0$, but not \eqref{eq:suffcondrecov}.

\subsection{The one-dimensional case and a thin-film dimension reduction limit}

For 
$\mathbb S^1$-valued maps $u=(u_1,u_2)$ depending only on one variable, say $x_1\in I$ for a finite interval $I\subset\R$, 
we have
$\dv u =\frac{du_1}{dx_1}.$
Since $|u_2|=\sqrt{1-u_1^2}$,
and $\sqrt\cdot$ is $C^{1/2}$,
it is not so surprising that $L^2$ control on $\dv u$ 
implies fractional regularity of order $1/2$ for $u$,
as in Theorem~\ref{t:reg_estim}.
In fact, using this simple idea, 
one can completely determine the
$\Gamma$-limit 
  \cite[Definition~1.5]{braides}
 in this one-dimensional case.

\begin{theorem}\label{t:1D}
The energy functionals
$E_\e\colon \mathcal D'(I;\R^2)\to \R$ given for $\e>0$ by
\begin{align*}
E_\e^{\mathrm{1D}}(u)
&
=
\begin{cases}
\int_I \big[ (u_1')^2 +\e (u_2')^2\big]\, dx
&
\text{ if }
u\in H^1(I;\mathbb S^1)\,,
\\
+\infty & \text{ otherwise}\,,
\end{cases}
\end{align*}
$\Gamma$-converge, as $\e\to 0$,
 to the limit functional $E_0\colon\mathcal D'(I;\R^2)\to\R$ given by
\begin{align*}
E_0^{\mathrm{1D}}(u)
&
=
\begin{cases}
\int_I (u_1')^2\, dx
&
\text{ if }u\in B^{1/2}_{4,\infty}(I;\mathbb S^1)
\text{ and }u_1\in H^1(I)\,,
\\
+\infty
&
\text{ otherwise}\,.
\end{cases}
\end{align*}
Moreover, any  sequence $(u_\e)\subset H^1(I;\mathbb S^1)$ with $\sup_\e E^{\mathrm{1D}}_\e(u_\e) <\infty$ for some sequence $\e\to 0$ is bounded in 
$B^{1/2}_{4,\infty}(I;\mathbb S^1)$ and therefore compact in $W^{s,4}(I;\mathbb S^1)$ for any $s\in (0,1/2)$.
\end{theorem}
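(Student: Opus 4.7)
The plan is to split the argument into three standard pieces: compactness in $B^{1/2}_{4,\infty}(I)$, the $\liminf$ inequality, and the construction of a recovery sequence. For compactness I would first note that $\|u_{1,\e}'\|_{L^2}^2 \leq E_\e^{1D}(u_\e)$ bounds $(u_{1,\e})$ in $H^1(I)$, and then leverage the $\mathbb S^1$-constraint to upgrade this into a uniform bound of $(u_\e)$ in $B^{1/2}_{4,\infty}(I)$; the compact embedding $B^{1/2}_{4,\infty}(I) \hookrightarrow W^{s,4}(I)$ for $s<1/2$ then supplies compactness. The upgrade rests on the pointwise inequality $|\sqrt{1-a^2} - \sqrt{1-b^2}|^2 \leq |a^2-b^2| \leq 2|a-b|$ for $a,b\in[-1,1]$, i.e. the $C^{1/2}$-H\"older continuity of $t\mapsto\sqrt{1-t^2}$. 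Splitting the integration domain in $x$ according to whether $u_{2,\e}(x)$ and $u_{2,\e}(x+h)$ share the same sign, this inequality gives $|D^h u_{2,\e}|^2 \leq 2|D^h u_{1,\e}|$ on the same-sign set. On the opposite-sign set, continuity provides some $y\in[x,x+h]$ with $u_{1,\e}(y)^2=1$, whence $u_{2,\e}(x)^2 \leq 2|u_{1,\e}(y)-u_{1,\e}(x)|$, so that Cauchy--Schwarz yields $u_{2,\e}(x)^4 \leq 4|h|\int_{[x,x+h]}(u_{1,\e}')^2\,dt$, and similarly at $x+h$. Integrating in $x$ and applying Fubini then produces $\int_I |D^h u_{2,\e}|^4\,dx \leq C|h|^2 \|u_{1,\e}'\|_{L^2}^2$, the required Besov bound.

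For the $\liminf$ inequality, starting from a distributional limit $u_\e\to u$ with bounded energy, the previous step lets me extract a subsequence with $u_{\e_k}\to u$ strongly in $W^{s,4}(I)$ and $u_{1,\e_k} \rightharpoonup u_1$ weakly in $H^1(I)$. Strong convergence preserves the $\mathbb S^1$-constraint in the limit, so $u\in B^{1/2}_{4,\infty}(I;\mathbb S^1)$; weak $L^2$-lower semicontinuity of the norm then yields $\int_I(u_1')^2 \leq \liminf \int_I(u_{1,\e_k}')^2 \leq \liminf E_{\e_k}^{1D}(u_{\e_k})$.

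For the recovery sequence, the starting observation is that if $u\in H^1(I;\mathbb S^1)$ then $E_\e^{1D}(u) = \int_I(u_1')^2 + \e\int_I(u_2')^2 \to \int_I(u_1')^2$, so the constant sequence $u_\e\equiv u$ already works. It therefore suffices to approximate a general $u$ in the domain of $E_0^{1D}$ by $u^{(k)}\in H^1(I;\mathbb S^1)$ with $u_1^{(k)}\to u_1$ strongly in $H^1(I)$ and $u^{(k)}\to u$ in $L^2$, and then diagonalize. To construct $u^{(k)}$, I would first mollify $u_1$ to a smooth $u_1^{(k)}\in H^1(I)$ with $|u_1^{(k)}|\leq 1$ and $u_1^{(k)}\to u_1$, perturbed if necessary so that it crosses the levels $\pm 1$ transversally at finitely many points; on each open interval of $\{|u_1^{(k)}|<1\}$ I would then set $u_2^{(k)} = \sigma\sqrt{1-(u_1^{(k)})^2}$ with a constant sign $\sigma\in\{\pm 1\}$ chosen to match the sign of $u_2$ on the corresponding connected component of $\{u_2\neq 0\}$. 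Transversality of $u_1^{(k)}$ at $\pm 1$ guarantees $u^{(k)}\in H^1(I;\mathbb S^1)$, and dominated convergence (using that $u\in C^0(\bar I)$ by Morrey-type embedding of $B^{1/2}_{4,\infty}(I)$ in dimension one) gives $u^{(k)}\to u$ in $L^2$. A diagonal choice $u_\e := u^{(k(\e))}$ with $k(\e)\to\infty$ slowly enough that $\e\int_I ((u_2^{(k(\e))})')^2\to 0$ completes the construction. The main obstacle is this density step: the sign structure of $u_2$ can be intricate since $u$ has only $B^{1/2}_{4,\infty}$ regularity, and it must be aligned with that of the smooth approximation of $u_1$. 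In dimension one this is tractable because on each connected component of $\{|u_1|<1\}$ the function $u_2$ has a constant sign and because $u$ is continuous.
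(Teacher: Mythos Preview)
Your compactness argument is correct and in fact more elementary than the paper's: the paper lifts to a phase $\varphi$ with $u=e^{i\varphi}$, introduces the auxiliary function $F(t)=\int_0^t|\sin s|\,ds$, observes $|u_1'|=|F(\varphi)'|$, and exploits that $F^{-1}$ is $C^{1/2}$. Your direct estimate via the same-sign/opposite-sign splitting avoids the lifting entirely and gives the same $B^{1/2}_{4,\infty}$ bound. The $\liminf$ step is identical to the paper's.

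The recovery sequence, however, has a genuine gap. Mollifying $u_1$ destroys the zero set of $u_2$: if $u_1$ touches $\pm 1$ only on a set of measure zero, then the mollified $u_1^{(k)}$ satisfies $|u_1^{(k)}|<1$ \emph{everywhere}, so $\{|u_1^{(k)}|<1\}$ is the whole interval while $\{u_2\neq 0\}$ may have infinitely many components on which $u_2$ alternates sign. There is then no way to assign a single sign $\sigma$ to $u_2^{(k)}$ and still have $u_2^{(k)}\to u_2$. A concrete obstruction: take $u=e^{i\varphi}$ with $\varphi(x)=x\sin(1/x)$ near $0$; one checks $u_1\in H^1$ and $u\in B^{1/2}_{4,\infty}$, yet $u_2$ changes sign at every $x_k=1/(k\pi)$, while any mollification of $u_1$ stays strictly below $1$ near the origin. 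Your proposed ``perturbation to transversality'' cannot repair this, since forcing $u_1^{(k)}$ to touch $\pm 1$ while keeping $|u_1^{(k)}|\leq 1$ means touching tangentially, not crossing, and you would need infinitely many such touching points.

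The paper's fix is to mollify the continuous lifting $\varphi$ rather than $u_1$, setting $u^{[\delta]}=e^{i\varphi_\delta}$; this automatically respects the sign structure of $u_2=\sin\varphi$. The price is that $(\cos\varphi_\delta)'$ is no longer the mollification of $(\cos\varphi)'$, and showing $(\cos\varphi_\delta)'\to(\cos\varphi)'$ in $L^2$ requires a commutator estimate for $F(\varphi_\delta)'-[F(\varphi)]_\delta'$ together with a decomposition of $I$ into ``elliptic'' intervals (where $|\sin\varphi|$ is bounded below, so $\varphi\in H^1$ locally) and ``non-elliptic'' intervals (where $|\sin\varphi|$ is small). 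This is the substantive content of the upper bound and is missing from your sketch.
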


The main subtlety in the proof of Theorem~\ref{t:1D} lies in the construction of the recovery sequence.
In contrast with the two-dimensional case, where we had to impose \eqref{eq:suffcondrecov},
here we combine the 
$B^{1/2}_{4,\infty}-$regularity with the assumption that $\dv u=du_1/dx_1\in L^2$ 
in order to deduce the validity of \eqref{eq:suffcondrecov}.

We would like to use this one-dimensional observation 
in order to gain more insight into the
 two-dimensional case.
Therefore we consider a somewhat simplified model
which resembles the one-dimensional model,
while introducing some mild two-dimensionality.
 
Specifically, for $\e,\delta>0$, we study 
the energy $E_\e$ given by \eqref{eq:Eeps}
 in a thin film
$\Omega_\delta =(-1,1)\times (-\delta,\delta),$
subject to Dirichlet boundary conditions in the first variable and periodic conditions in the thin variable $x_2$.
After rescaling the $x_2$ variable to unit length,
we are led to the energy functional
\begin{align*}
E_{\e,\delta}(u)
&
=\delta E_\e(u(\cdot,\cdot/\delta);\Omega_\delta)
\nonumber
\\
&
=\frac 12\int_{[-1,1]^2} 
\left(\partial_1u_{1} + \frac 1\delta \partial_2 u_2\right)^2 
+
\e \left( \partial_1 u_2 -\frac 1\delta \partial_2 u_1\right)^2\, dx\,,
\end{align*}
for maps $u\in H^1( (-1,1)^2;\mathbb S^1)$ with boundary conditions
\begin{align}
u(\pm 1,x_2)
&=e_\pm \in\mathbb S^1\qquad\forall x_2\in (-1,1)\,,
\nonumber
\\
u(x_1,+1)
&
=u(x_1,-1)\qquad\forall x_1\in (-1,1).
\label{eq:thin_bdry}
\end{align}
The latter imposes periodicity with respect to the second variable.
We can consider the energy $E_{\e,\delta}\colon \mathcal D'((-1,1)^2;\R^2)\to \R$ by setting it to be $+\infty$ unless 
$u\in H^1( (-1,1)^2;\mathbb S^1)$ with the boundary conditions \eqref{eq:thin_bdry}.


\begin{theorem}\label{t:thin}
The $\Gamma$-limit in $\mathcal D'((-1,1)^2;\R^2)$ as $(\e,\delta)\to (0,0)$ of the energy functionals $E_{\e,\delta}$
is  the functional 
$\widetilde E_0^{\mathrm{1D}}\colon \mathcal D'((-1,1)^2;\R^2)\to\R$ given by
 \begin{align*}
 \widetilde E_0^{\mathrm{1D}}(u)
 =
 \begin{cases}
\int_{-1}^{1} (\partial_1 u_1)^2\, dx_1
&
\text{ if }
\partial_{2}u=0,\;
u\in B^{\frac 12}_{4,\infty}((-1,1);\mathbb S^1),
\\
&\quad\text{ and }u(\pm 1 )= e_\pm\,,
\\
+\infty
&
\text{ otherwise}.
\end{cases}
\end{align*}
Moreover any sequence $(u^{(j)})\subset H^1((-1,1)^2;\mathbb S^1)$ such that $\sup E_{\e_j,\delta_j}(u_j)<\infty$ for some $(\e_j,\delta_j)\to (0,0)$ is bounded in $B^{1/2}_{3,\infty}((-1,1)^2;\mathbb S^1)$, hence compact in 
$W^{s,3}((-1,1)^2;\mathbb S^1)$ for any $s\in (0,1/2)$.
\end{theorem}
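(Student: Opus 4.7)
The plan is to follow the standard $\Gamma$-convergence framework (compactness, liminf, recovery) and to reduce the thin-film problem to the one-dimensional problem of Theorem~\ref{t:1D} via the scaling $\delta\to 0$.

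\textbf{Compactness and identification of the limit.}
I would unfold the $\delta$-scaling: define $w(x_1,x_2):= u(x_1, x_2/\delta)$ on $(-1,1)\times(-\delta,\delta)$, extend by periodicity in $x_2$ to $(-1,1)\times\mathbb R$ and by the constants $e_\pm$ across $x_1=\pm 1$. A change of variables shows $\|\dv w\|_{L^2}^2\lesssim E_{\e,\delta}(u)$ on any fixed bounded rectangle, the constant extension contributing nothing to the divergence. Applying Theorem~\ref{t:reg_estim} on a fixed $\Omega'\subset\subset\Omega$ containing $(-1,1)^2$, and undoing the rescaling $k=(h_1, h_2/\delta)$, yields $\|D^h u\|_{L^3((-1,1)^2)}\leq C|h|^{1/2}$, so $(u^{(j)})$ is bounded in $B^{1/2}_{3,\infty}((-1,1)^2)$ and compact in $W^{s,3}$. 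To show the limit $u$ depends only on $x_1$, the $L^2$ bound on $\partial_1 u_1^{(j)}+\delta_j^{-1}\partial_2 u_2^{(j)}$ forces $\partial_2 u_2^{(j)}\to 0$ in $H^{-1}$, hence $\partial_2 u_2=0$; from $|u|^2=1$ one obtains $\partial_2(u_1^2)=-\partial_2(u_2^2)=0$, so $u_1^2$ is independent of $x_2$, and a regularity argument gives $\partial_2 u_1=0$. The Dirichlet conditions $u(\pm 1)=e_\pm$ pass to the limit by continuity of the trace on the Besov extension.

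\textbf{Liminf and $B^{1/2}_{4,\infty}$-regularity.}
Averaging in $x_2$, set $\bar u_1^{(j)}(x_1):=\frac12\int_{-1}^{1}u_1^{(j)}(x_1,x_2)\, dx_2$. Periodicity makes the $\delta_j^{-1}\partial_2 u_2^{(j)}$ contribution telescope to zero, so $\partial_1\bar u_1^{(j)}=\frac12\int_{-1}^{1}(\partial_1 u_1^{(j)}+\delta_j^{-1}\partial_2 u_2^{(j)})\, dx_2$. Jensen gives $\int(\partial_1\bar u_1^{(j)})^2\, dx_1\leq E_{\e_j,\delta_j}(u^{(j)})$, and weak lower semicontinuity with $\bar u_1^{(j)}\to u_1$ in $L^2$ yields $\int (u_1')^2\, dx_1\leq\liminf E_{\e_j,\delta_j}(u^{(j)})$, so in particular $u_1\in H^1$. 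To upgrade from $B^{1/2}_{3,\infty}$ to $B^{1/2}_{4,\infty}$ for the 1D limit, I combine $u_1\in H^1\subset C^{1/2}$ with $|u|^2=1$. Writing $|u_2|=\sqrt{1-u_1^2}$ and using $|\sqrt a-\sqrt b|^2\leq|a-b|$ gives $\int |D^h|u_2||^4\, dx\leq C|h|^2$. Across sign changes of $u_2$, an intermediate value argument produces $\xi\in[x,x+h]$ with $|u_1(\xi)|=1$, so $|u_2(x)|^2\leq 2|h|^{1/2}\|u_1'\|_{L^2(x,\xi)}$, and Fubini delivers the $|h|^2$ bound on $\|D^h u_2\|_{L^4}^4$ over the sign-change set.

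\textbf{Recovery sequence.}
Given an admissible $u=u(x_1)$, Theorem~\ref{t:1D} provides $u_\e\in H^1((-1,1);\mathbb S^1)$ with $u_\e\to u$ and $E_\e^{\mathrm{1D}}(u_\e)\to\int (u_1')^2\, dx_1$. Setting $u_{\e,\delta}(x_1,x_2):= u_\e(x_1)$, independent of $x_2$, a direct computation gives $E_{\e,\delta}(u_{\e,\delta})=E_\e^{\mathrm{1D}}(u_\e)$ for every $\delta>0$, yielding the upper bound along any $(\e_j,\delta_j)\to(0,0)$. Matching the Dirichlet values $u_\e(\pm 1)=e_\pm$ should follow from the explicit convolution--projection structure of the 1D recovery, combined if needed with a short boundary layer whose cost vanishes by the $H^1$-regularity of $u_1$.

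The main obstacle I anticipate lies in the regularity upgrade $B^{1/2}_{3,\infty}\to B^{1/2}_{4,\infty}$ for the limit: the sign-change analysis for $u_2$ intertwines the $\mathbb S^1$ constraint with $u_1\in H^1$ and requires control on the measure of the sign-change set, which must be handled carefully since $u_2$ need not be continuous a priori. A secondary delicacy is verifying that the 1D recovery sequence attains the Dirichlet data, which should reduce to a boundary-layer argument but requires inspecting the construction in Theorem~\ref{t:1D}.
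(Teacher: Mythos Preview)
Your overall architecture matches the paper's: rescale in $x_2$ to apply Theorem~\ref{t:reg_estim}, use Jensen on $x_2$-averages for the lower bound, and import the one-dimensional recovery sequence for the upper bound. There are, however, two places where your argument diverges from the paper's, one of which contains a real gap.

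\textbf{The step $\partial_2 u=0$.} Your route---deducing $\partial_2 u_2=0$ from the divergence bound, then using $|u|^2=1$ to get $\partial_2(u_1^2)=0$, and finally invoking an unspecified ``regularity argument'' to conclude $\partial_2 u_1=0$---is incomplete as stated: knowing $u_1^2$ is independent of $x_2$ only pins down $|u_1|$, and one must still rule out sign changes in $x_2$. This can be repaired via a Fubini-on-Besov argument (for a.e.\ $x_1$ the slice $u_1(x_1,\cdot)$ lies in $B^{1/2}_{3,\infty}((-1,1))\subset C^0$, so continuity forces a constant sign), but you have not said this. The paper avoids the detour entirely: when you undo the rescaling, you should track the $\delta$-dependence rather than discard it. One obtains
\[
\|D^{te_2}u^{(j)}\|_{L^3}^3\lesssim (\delta_j|t|)^{3/2}\bigl(1+E_{\e_j,\delta_j}(u^{(j)})^{3/2}\bigr),
\]
so any limit satisfies $D^{te_2}u=0$ for all $t$, which gives $\partial_2 u=0$ for both components at once. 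Your own rescaling already contains this; you simply wrote the weaker bound $C|h|^{1/2}$ and lost the $\delta$ factor.

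\textbf{The upgrade to $B^{1/2}_{4,\infty}$.} Here your approach is genuinely different from the paper's and is in some ways more elementary. The paper lifts the (continuous, since $B^{1/2}_{3,\infty}\subset C^0$ in one dimension) limit to a phase $\varphi$ with $u=e^{i\varphi}$, then shows $F(\varphi)\in H^1$ where $F(t)=\int_0^t|\sin s|\,ds$, via a case analysis near and away from zeros of $\sin\varphi$; since $F^{-1}$ is $C^{1/2}$ this yields $\varphi\in B^{1/2}_{4,\infty}$. Your direct estimate on $u_2$---splitting into same-sign increments (where $|D^h u_2|^4\lesssim|D^h u_1|^2$) and sign-change increments (where an intermediate zero $\xi$ of $u_2$ gives $|u_2(x)|^2\leq 2|h|^{1/2}\|u_1'\|_{L^2(x,\xi)}$, and Fubini closes the bound)---works cleanly once you note that $u$ is continuous, so the intermediate-value step is legitimate and your stated concern about the sign-change set dissolves. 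The paper's lifting approach buys a structure (the function $F(\varphi)$) that is reused in the recovery construction of Theorem~\ref{t:1D}; your approach buys directness.

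Your identification of the Dirichlet-matching issue in the recovery sequence is correct; the paper glosses over it as well, and it is indeed handled by a short boundary-layer correction using the continuity of the one-dimensional limit.
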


Most of Theorem~\ref{t:thin} follows directly from Theorem~\ref{t:reg_estim} and Theorem~\ref{t:1D}, 
except for one slightly subtle part: the fact that bounded energy limits  have the $B^{1/2}_{4,\infty}-$regularity, see Lemma~\ref{l:1Dcont}.

\begin{remark}\label{r:sym}
Theorem~\ref{t:thin} implies, in particular, that minimizers of $E_{\e,\delta}$ converge to a one-dimensional configuration.
But one can actually also check directly that minimizers of $E_{\e,\delta}$ are one-dimensional for any value of $\e,\delta>0$, see  Appendix~\ref{a:sym}.
\end{remark}

\subsection{Plan of the article}

In \textsection\ref{s:comp_reg} we prove the compensated regularity estimate of Theorem~\ref{t:reg_estim}, 
and Corollary~\ref{c:jac} about the Jacobian.
In \textsection\ref{s:trace} we prove Theorem~\ref{t:trace} 
about the compactness of traces.
In \textsection\ref{s:recov} we construct the recovery sequence of Theorem~\ref{t:recovery2D}.
In \textsection\ref{s:1D} we treat the one-dimensional case and prove Theorem~\ref{t:1D}.
In \textsection\ref{s:thin} we prove the Theorem~\ref{t:thin} about the thin-domain limit.
In Appendix~\ref{a:traces} we adapt to our context the proof, due to \cite{vasseur01}, of the fact that limit maps admit strong traces.
In Appendix~\ref{a:sym} we prove the one-dimensional symmetry of minimizers with periodic boundary conditions.
Throughout the article, we will use the symbol $\lesssim$ to denote inequality up to an absolute multiplicative constant.

\subsection*{Acknowledgments}

Part of this work was conducted during an AIM-SQuaREs meeting and AIM support is gratefully acknowledged.
X.L. is supported by the ANR project ANR-22-CE40-0006. L.B. is supported by an NSERC Discovery grant. P.S. is supported by a Simons Collaboration grant 585520 and an NSF grant DMS 2106516. D.G. is supported by an NSF grant DMS-2106551.

\section{Compensated regularity}\label{s:comp_reg}

In this section we prove  Theorem~\ref{t:reg_estim}.
The proof relies on the strategy 
used in \cite{GL20} 
to show regularity of a class of 
divergence-free unit vector fields,
based on ideas from \cite{golse09,GP13}
where the authors introduced quantitative improvements on 
the div-curl lemma from the theory of compensated compactness  \cite{murat78,tartar79}.
Before going into the details, we describe briefly 
the structure and main ingredients of the proof:
\begin{itemize}
\item The assumptions on $u$ imply $\dv\Phi(u)\in L^2(\Omega)$ for a large family of vector fields $\Phi\in C^1(\mathbb S^1;\R^2)$, called \emph{entropies} by analogy with scalar conservations laws,
see \eqref{eq:chainENT} and Lemma~\ref{l:kinlim}.
\item These $L^2$ entropy productions provide a 
\emph{kinetic equation} satisfied by the characteristic function $\chi(s,x)=\mathbf 1_{e^{is}\cdot u(x) >0}$, where $s\in\T=\R/2\pi\Z$ is the kinetic variable, see Lemma~\ref{l:kin}.
\item This kinetic equation implies a \emph{compensation identity} relating the divergnce $\dv u\in L^2$ to a family of quantities $\Delta^{\varphi,\chi}(x,h)$ depending quadratically on the finite difference $D^h_x\chi$,
see \eqref{eq:Delta} and Lemma~\ref{l:comp_id}.
\item For a particular choice of the parameter $\varphi$, 
the quantity $\Delta^{\varphi,\chi}$ controls 
variations of $u$ via a 
coercivity estimate  
$\Delta^{\varphi,\chi}(x,h)\gtrsim |D^h u(x)|^3$, see  \eqref{eq:Delta_varphi0_coerc}.
\item Estimating the terms in the compensation identity provides control on the integral $\int_{B_1} \Delta^{\varphi,\chi}(x,h)\,dx$ in terms of $\|\dv u\|_{L^2(B_2)}$, which, combined with the previous ingredient, proves Theorem~\ref{t:reg_estim}.
\end{itemize}
In the next subsections we detail all these ingredients: entropies and kinetic formulation are presented in \textsection\ref{ss:ent_kin}, 
the compensation identity in \textsection\ref{ss:comp_id},
estimates of the terms involved in that identity in \textsection\ref{ss:RHScomp},
to conclude the proof of the regularity estimate \eqref{eq:reg_estim} in \textsection\ref{ss:pf_reg_kin}.

\subsection{Entropies and kinetic formulation}\label{ss:ent_kin}

We rely on the notion of entropies for the eikonal equation, introduced in \cite{DKMO01} in analogy with scalar conservation laws.
These are the vector fields in the class
\begin{align}
\mathrm{ENT}=\bigg\lbrace
\Phi\in C^1(\mathbb S^1;\R^2)\colon
&
\exists\lambda_\Phi\in C^0(\mathbb S^1;\R),
\nonumber
\\
&
\frac{d}{d\theta}\Phi(e^{i\theta})=\lambda_\Phi(e^{i\theta})ie^{i\theta}
\quad\forall\theta\in\R
\bigg\rbrace\,.
\label{eq:ENT}
\end{align}
Their relevance comes from a direct application of the chain rule, which shows that, for all $u\in H^1(\Omega;\mathbb S^1)$, we have
\begin{align}\label{eq:chainENT}
\dv \Phi(u)=\lambda_\Phi(u)\dv u\qquad \forall\Phi\in \mathrm{ENT}\,.
\end{align}
%
In the context of hyperbolic conservation laws, it
was first observed in \cite{LPT94} that 
information about a large family of entropy productions, as in \eqref{eq:chainENT}, can be succinctly expressed as a kinetic formulation, see also \cite{perthame02}.
In the context of divergence-free unit vector fields, 
a kinetic formulation 
was introduced in \cite{JP01} and reformulated in \cite{GL20},
for the
indicator function
\begin{align}\label{eq:chi}
\chi(s,x)=\mathbf 1_{u(x)\cdot e^{is}>0}\,,\quad s\in\T=\R/2\pi\Z,\; x\in\Omega\,,
\end{align} 
where
$z_1\cdot z_2 =\re(\bar z_1 z_2)$ is the scalar product of $z_1,z_2\in\mathbb C$.
We adapt it here to our setting.

\begin{lemma}\label{l:kin}
For any $u\in H_{\dv}(\Omega;\mathbb S^1)$ satisfying \eqref{eq:chainENT},
the indicator function $\chi\in L^\infty(\T\times\Omega)$ defined in \eqref{eq:chi}
satisfies
\begin{align}
&
e^{is}\cdot\nabla_x\chi=\Theta
\quad\text{in }\mathcal D'(\T\times\Omega)\,,
\label{eq:kin}
\end{align}
for $\Theta\in L^2(\Omega;\mathcal M(\T))$
 given explicitly by
\begin{align*}
&\Theta(s,x)=
\left(\delta_{\theta(x)+\frac\pi 2}(s) + \delta_{\theta(x)-\frac\pi 2}(s)  \right)\dv u (x) \,,
\nonumber
\end{align*}
where $\theta(x)\in\T$ is such that $u(x)=e^{i\theta(x)}$.
\end{lemma}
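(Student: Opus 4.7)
The strategy is a standard velocity‑averaging argument: extract the kinetic equation by probing \eqref{eq:chainENT} against a rich family of entropies obtained from averaging the kinetic variable $s$ against a test function.

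Concretely, for any $\psi\in C^\infty(\mathbb T)$ I will define the candidate entropy
\begin{align*}
\Phi_\psi(z)=\int_{\mathbb T} \mathbf 1_{z\cdot e^{is}>0}\,\psi(s)\,e^{is}\,ds\,,\qquad z\in\mathbb S^1\,.
\end{align*}
Writing $z=e^{i\theta}$, the set $\{s:z\cdot e^{is}>0\}$ is the arc $(\theta-\pi/2,\theta+\pi/2)$, so $\Phi_\psi(e^{i\theta})=\int_{\theta-\pi/2}^{\theta+\pi/2}\psi(s)e^{is}\,ds$. Differentiating under the integral via the fundamental theorem of calculus gives
\begin{align*}
\frac{d}{d\theta}\Phi_\psi(e^{i\theta})
=\psi(\theta+\tfrac\pi 2)e^{i(\theta+\pi/2)}-\psi(\theta-\tfrac\pi 2)e^{i(\theta-\pi/2)}
=ie^{i\theta}\bigl[\psi(\theta+\tfrac\pi 2)+\psi(\theta-\tfrac\pi 2)\bigr]\,,
\end{align*}
so that $\Phi_\psi\in\mathrm{ENT}$ with continuous coefficient $\lambda_{\Phi_\psi}(e^{i\theta})=\psi(\theta+\pi/2)+\psi(\theta-\pi/2)$.

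Now I will test the proposed kinetic equation against a product $\psi(s)\varphi(x)$ with $\varphi\in C_c^\infty(\Omega)$, which suffices since such products are dense in $C_c^\infty(\mathbb T\times\Omega)$. Recognising $\Phi_\psi(u(x))=\int_{\mathbb T}\chi(s,x)\psi(s)e^{is}\,ds$, integration by parts in $x$ yields
\begin{align*}
\langle e^{is}\cdot\nabla_x\chi,\psi\varphi\rangle
=-\int_{\Omega}\Phi_\psi(u(x))\cdot\nabla\varphi(x)\,dx
=\langle \dv \Phi_\psi(u),\varphi\rangle\,.
\end{align*}
By the hypothesis \eqref{eq:chainENT} applied to the entropy $\Phi_\psi$ and using the explicit formula for $\lambda_{\Phi_\psi}$,
\begin{align*}
\langle \dv \Phi_\psi(u),\varphi\rangle
=\int_\Omega\varphi(x)\bigl[\psi(\theta(x)+\tfrac\pi 2)+\psi(\theta(x)-\tfrac\pi 2)\bigr]\dv u(x)\,dx
=\langle\Theta,\psi\varphi\rangle\,,
\end{align*}
which is exactly the pairing of the claimed $\Theta$ with $\psi\varphi$. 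The $L^2(\Omega;\mathcal M(\mathbb T))$‑regularity of $\Theta$ follows since the measure $\delta_{\theta(x)+\pi/2}+\delta_{\theta(x)-\pi/2}$ has total variation $\le 2$ and $\dv u\in L^2(\Omega)$.

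The step I expect to be the most delicate is verifying that $\Phi_\psi$ genuinely lies in the class $\mathrm{ENT}$ defined by \eqref{eq:ENT}, i.e.\ checking that the differentiation of $\Phi_\psi(e^{i\theta})$ can be performed despite the jump discontinuity of the indicator inside the integral; this is handled by the boundary‑term computation above. Everything else is then a bookkeeping exercise in distributional pairings, and the identity passes from products $\psi\varphi$ to all of $\mathcal D'(\mathbb T\times\Omega)$ by density.
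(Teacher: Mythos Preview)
Your proof is correct and follows essentially the same approach as the paper: both introduce the entropy family $\Phi_\psi(e^{i\theta})=\int_{\theta-\pi/2}^{\theta+\pi/2}\psi(s)e^{is}\,ds$, verify via the fundamental theorem of calculus that $\lambda_{\Phi_\psi}(e^{i\theta})=\psi(\theta+\pi/2)+\psi(\theta-\pi/2)$, and then test \eqref{eq:chainENT} against tensor products $\psi(s)\varphi(x)$ to obtain \eqref{eq:kin}. Your explicit remark on the $L^2(\Omega;\mathcal M(\mathbb T))$ bound for $\Theta$ is a nice addition.
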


\begin{proof}[Proof of Lemma~\ref{l:kin}]
Consider 
 the family of entropies given by
\begin{align}\label{eq:Phi_g}
\Phi_g(e^{i\theta})
&
=\int_{\T} g(s)\mathbf 1_{e^{is}\cdot e^{i\theta}>0}\, e^{is}\, ds
\nonumber
\\
&
=
\int_{\theta-\frac\pi 2}^{\theta+\frac\pi 2}
g(s) e^{is}\, ds
\,, \qquad g\in C^0(\T;\R)\,,
\end{align}
which satisfies \eqref{eq:ENT} with $\lambda_{\Phi_g}(e^{i\theta})=g(\theta+\pi/2)+g(\theta-\pi/2)$.
%
The family \eqref{eq:Phi_g} is naturally obtained by taking a $2\pi$-periodic antiderivative of the identity $\partial_\theta\Phi =\lambda_\Phi ie^{i\theta}$
in the case where $\lambda_\Phi$ is $\pi$-periodic,
and it has the advantage of being defined in terms of the singular entropies
\begin{align*}
\Phi^{\lbrace s\rbrace}(z)=\mathbf 1_{z\cdot e^{is}>0}e^{is}\,,
\end{align*}
which play an important role in \cite{DKMO01,JOP02}.
This enables us to obtain the kinetic formulation \eqref{eq:kin}.
Namely, testing \eqref{eq:chainENT} against a test function $\zeta(x)$ for all entropies in the family \eqref{eq:Phi_g}, we have
\begin{align*}
\int_{\Omega} \langle \Phi_g(u(x)),\nabla\zeta(x)\rangle\, dx
&
=
\int_{\Omega}\lambda_{\Phi_g}(u(x))\dv u(x)\, \zeta(x)\, dx\,.
\end{align*}
Using \eqref{eq:Phi_g} and the expression of 
\begin{align*}
\lambda_{\Phi_g}(e^{i\theta})=
g(\theta+\frac\pi 2)+g(\theta-\frac\pi 2)
= \int_{\T} g(s) (\delta_{\theta+\frac\pi 2} +\delta_{\theta-\frac\pi 2})(ds)\,,
\end{align*}
this can be rewritten as
\begin{align*}
\int_{\T\times\Omega}\mathbf 1_{u(x)\cdot e^{is}>0}
\,e^{is}\cdot\nabla_x[ g(s)\zeta(x)]\, dsdx
=\langle \Theta,g(s)\zeta(x) \rangle
\,,
\end{align*}
for all $g\in C^0(\T)$ and $\zeta\in C^1_{c}(\Omega)$,
where $\langle \Theta,g(s)\zeta(x) \rangle$ denotes the value of the distribution $\Theta\in L^2(\Omega;\mathcal M(\T))\subset \mathcal D'(\T\times\Omega)$ 
applied to the function $(s,x)\mapsto g(s)\zeta(x)$.
In other words,
the indicator function $\chi(s,x)=\mathbf 1_{u(x)\cdot e^{is}>0}$ satisfies
the kinetic formulation \eqref{eq:kin}.
\end{proof}

\begin{remark}\label{r:kin}
For another interpretation of the kinetic formulation \eqref{eq:kin},
note that
\begin{align*}
e^{is}\mathbf 1_{e^{is}\cdot z>0} =\partial_s\Psi(s,z) +\frac 1\pi z,
\end{align*}
where $\Psi(s,\cdot)$ is the  Lipschitz entropy given by
\begin{align*}
\Psi(s,z)=\int_{\T} f_0(t-s)\mathbf 1_{e^{it}\cdot z>0}\,e^{it}\, dt,
\qquad f_0(s)=\frac{s}{2\pi}\text{ for }0\leq s<2\pi\,.
\end{align*}
Therefore we have
\begin{align*}
e^{is}\cdot\nabla_x \chi
&=\dv [e^{is}\mathbf 1_{e^{is}\cdot u>0}] 
=\partial_s \dv \Psi(s,u) 
+ \frac 1\pi \dv u (x)\\
&=\left(\frac 1\pi + \partial_s [\lambda_{0}(e^{-is}u)]  \right)\dv u,
\end{align*}
where $\lambda_0(e^{i\theta})=f_0(\theta+\pi/2)+f_0(\theta-\pi/2)$.
Since $f_0'(s)=1/(2\pi)-\delta_0(s)$, this is exactly \eqref{eq:kin}.
\end{remark}

The kinetic formulation \eqref{eq:kin} 
is all we require 
to prove the compensated regularity 
estimate \eqref{eq:reg_estim}.
In other words,
we will obtain Theorem~\ref{t:reg_estim} as a consequence of 
the following slightly more general statement.

\begin{theorem}\label{t:reg_kin}
Let $u\in H_{\dv}(\Omega;\mathbb S^1)$
be such that 
$\chi(s,x)=\mathbf 1_{u(x)\cdot e^{is}>0}$
  satisfies the kinetic formulation \eqref{eq:kin}. 
Then $u$ satisfies the
$B^{1/2}_{3,\infty,\loc}$
 regularity estimate \eqref{eq:reg_estim}.
\end{theorem}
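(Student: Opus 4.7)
The plan is to follow the kinetic compensation strategy pioneered in \cite{golse09,GP13} and adapted to divergence-free unit vector fields in \cite{GL20}. Thanks to Lemma~\ref{l:kin}, the whole problem is recast in terms of the indicator $\chi(s,x)=\mathbf 1_{u(x)\cdot e^{is}>0}$, which solves the transport-type equation $e^{is}\cdot\nabla_x\chi=\Theta$ in $\mathcal D'(\T\times\Omega)$, with $\|\Theta\|_{L^2(\Omega;\mathcal M(\T))}\lesssim\|\dv u\|_{L^2(\Omega)}$. The goal is then to upgrade this transport structure into a quantitative fractional bound on $u$ itself.

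First I would derive a compensation identity. For fixed $h\in\R^2$, apply the finite difference $D^h_x$ to the kinetic equation to get $e^{is}\cdot\nabla_x D^h_x\chi=D^h_x\Theta$. Multiplying this by $D^h_x\chi(s',x)$ against a symmetric kernel $\varphi(s,s')$, integrating over $(s,s')\in\T^2$ and over $x$ in a slightly larger open set $\Omega'\subset\subset\Omega''\subset\subset\Omega$ with a cutoff $\eta\in C^\infty_c(\Omega'')$, and using the antisymmetry of the transport operator $e^{is}\cdot\nabla_x$ under the interchange $(s,s')\leftrightarrow(s',s)$ to cancel the symmetric quadratic gradient term, one obtains after integration by parts a schematic identity
\[
\int_{\Omega'}\Delta^{\varphi,\chi}(x,h)\,dx
=\mathcal R_1(\varphi,\chi,\Theta;h)+\mathcal R_2(\varphi,\chi,\eta;h),
\]
where $\Delta^{\varphi,\chi}(x,h)$ is a quadratic form in $D^h_x\chi$ and the remainder terms involve $\Theta$ (hence $\dv u$) and the cutoff $\eta$.

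The crucial step is the coercivity estimate $\Delta^{\varphi,\chi}(x,h)\gtrsim|D^h u(x)|^3$ for a well-chosen $\varphi$. Here I would exploit that $s\mapsto D^h_x\chi(\cdot,x)$ is essentially the signed indicator of a pair of arcs of total length $\simeq 2|D^h u(x)|$ connecting $\theta(x)\pm\tfrac\pi 2$ to $\theta(x+h)\pm\tfrac\pi 2$, where $u(x)=e^{i\theta(x)}$. A naive quadratic pairing only yields $|D^h u|^2$; to recover the extra factor of $|D^h u|$ needed to hit the cubic lower bound, $\varphi$ must further localize near the diagonal $s=s'$. A natural candidate is $\varphi(s,s')=\psi(s-s')$ for a smooth odd profile $\psi$ vanishing linearly at the origin, which forces the pairing to concentrate on an interval of length $\simeq |D^h u(x)|$ inside the support, producing precisely the additional factor of $|D^h u(x)|$.

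Combining this coercivity with Cauchy--Schwarz estimates on the right-hand side should yield $\mathcal R_i\lesssim|h|^{3/2}(1+\|\dv u\|_{L^2(\Omega)}^2)$; the $|h|^{1/2}$ gain originates from the standard bound $\|D^h_x\chi\|_{L^2(\Omega'';L^1(\T))}^2\lesssim|h|$ together with a Young-type interpolation against $\varphi$. Summing the two contributions gives $\int_{\Omega'}|D^h u|^3\,dx\lesssim(1+\|\dv u\|_{L^2(\Omega)})|h|^{3/2}$, which is exactly \eqref{eq:reg_estim}. Theorem~\ref{t:reg_estim} then follows immediately for $u\in H^1(\Omega;\mathbb S^1)$ via Lemma~\ref{l:kin}, and is extended to $u\in\mathcal S$ by passing to the limit in the kinetic equations of an approximating sequence (using that $\Theta$ is controlled in $L^2$ uniformly). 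The main obstacle I anticipate is precisely the coercivity step: pinning down a single kernel $\varphi$ for which the cubic lower bound on $\Delta^{\varphi,\chi}$ and the $|h|^{1/2}$ gain on the right-hand side hold simultaneously, since these two requirements push in opposite directions — coercivity wants $\varphi$ sharply concentrated on the diagonal, while the remainder estimates prefer $\varphi$ smooth and spread out.
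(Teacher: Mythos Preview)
Your high-level strategy --- kinetic formulation, a compensation identity for a quadratic form $\Delta^{\varphi,\chi}$ in $D^h\chi$, coercivity $\Delta^{\varphi,\chi}\gtrsim|D^hu|^3$, then estimating the remainder --- matches the paper's. But the two places where you are vague are precisely where the argument has content, and in both your proposal contains a genuine gap.

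\textbf{The $|h|^{3/2}$ mechanism.} You claim the right-hand side satisfies $\mathcal R_i\lesssim|h|^{3/2}(\cdots)$ with the $|h|^{1/2}$ coming from a ``standard bound'' $\|D^h_x\chi\|_{L^2_x L^1_s}^2\lesssim|h|$. But $\int_\T|D^h\chi(s,x)|\,ds\simeq|D^hu(x)|$, so this bound is equivalent to $\|D^hu\|_{L^2}^2\lesssim|h|$, which is a $B^{1/2}_{2,\infty}$ estimate on $u$ --- essentially what you are trying to prove. The argument is circular as stated, and you do not explain where the remaining full power of $|h|$ would come from either. The paper resolves this quite differently: the compensation identity is not an algebraic relation at fixed $h$ but a \emph{differential identity in the scalar parameter} $\tau$ with $h=\tau\mathbf e_1$, namely
\[
\frac{d}{d\tau}\Delta^{\varphi,\chi}(x,\tau\mathbf e_1)=I^\tau(x)+\dv_x A^\tau(x).
\]
Integrating this in $\tau$ from $0$ to the target value yields one full factor of $|\tau|$; the integrand is then estimated by $\|\dv u\|_{L^2}\|\eta^2|D^{\tau'\mathbf e_1}u|\|_{L^2}$ plus lower-order terms. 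Crucially, the right-hand side still contains the unknown quantity $\sup_{|\tau'|\le|\tau|}\|\eta\,D^{\tau'\mathbf e_1}u\|_{L^3}=:X$, and one closes by Young's inequality $aX\lesssim\lambda^{-1/2}a^{3/2}+\lambda X^3$, absorbing $X^3$ into the left-hand side. Neither the $\tau$-derivative structure nor this absorption step appears in your sketch, and without them there is no non-circular route to the $|h|^{3/2}$ gain.

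\textbf{The kernel $\varphi$ and the coercivity.} Your proposed $\varphi$ --- smooth, odd, vanishing linearly at the origin --- is the opposite of what works. In the paper the quadratic form already carries a factor $\sin(t-s)$ coming from the wedge-product structure $D^h\Phi^{\{t\}}(u)\wedge D^h\Phi^{\{s\}}(u)$; the role of $\varphi$ is then to be odd and $\pi$-periodic, and the coercive choice is the \emph{discontinuous} sign function $\varphi_0(s)=\mathrm{sign}(\cos s\sin s)$, which does \emph{not} vanish at the origin. A $\varphi$ vanishing linearly would cost an extra factor of $|D^hu|$ and destroy the cubic lower bound. Your concern that coercivity and remainder estimates pull in opposite directions is misplaced: with the $\tau$-derivative structure, the remainder estimates only need $\|\varphi\|_{L^\infty}$ and $\|\varphi\|_{L^1}$, not smoothness or decay near the diagonal.
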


\begin{proof}[Proof of Theorem~\ref{t:reg_estim} from Theorem~\ref{t:reg_kin}]
We have already seen that maps $u\in H^1(\Omega;\mathbb S^1)$ satisfy \eqref{eq:chainENT} and therefore \eqref{eq:kin}, so Theorem~\ref{t:reg_kin} implies Theorem~\ref{t:reg_estim} in the case $u\in H^1(\Omega;\mathbb S^1)$.
In order to treat the case $u\in\mathcal S$ we just need to check that Theorem~\ref{t:reg_kin} also implies that limits of bounded energy sequences satisfy \eqref{eq:chainENT} and therefore \eqref{eq:kin}. 
By definition \eqref{eq:Glimsupfinite} of $\mathcal S$, there exists a sequence $(u_j)\subset H^1(\Omega;\mathbb S^1)$ 
such that 
$u_j\to u$ in $\mathcal D'(\Omega)$ and 
$E_{\e_j}(u_j)\leq C$ for some $\e_j\to 0$.
This energy bound implies $\|\dv u_j\|_{L^2(\Omega)}^2\leq C$,
so, according to the next Lemma, $u$ does indeed satisfy \eqref{eq:chainENT}.
\end{proof}

\begin{lemma}\label{l:kinlim}
If a sequence $(u_j)\subset H^1(\Omega;\mathbb S^1)$ satisfies $\sup_j\|\dv u_j\|_{L^2(\Omega)}<\infty$, there exists $u\in B^{1/2}_{3,\infty,\loc}(\Omega;\mathbb S^1)$ with $\dv u\in L^2(\Omega)$ such that
\begin{align*}
&
u_j\to u\qquad\text{in }L^1(\Omega)\text{ and }a.e.,
\\
&
\dv \Phi(u_j)\rightharpoonup \dv\Phi(u)=\lambda_\Phi(u)\dv u\quad
\text{weakly in }L^q(\Omega)
\text{ for }1 < q<2\,,
\end{align*}
for all  $\Phi\in\mathrm{ENT}$,
along  a non-relabeled subsequence.
\end{lemma}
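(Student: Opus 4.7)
The plan is to obtain the limit $u$ together with its regularity from the uniform Besov estimate produced by Theorem~\ref{t:reg_kin}, and then to pass to the limit in the entropy production identity \eqref{eq:chainENT} by a weak-times-strong product argument.

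For each $j$, the chain rule identity \eqref{eq:chainENT} holds for $u_j\in H^1(\Omega;\mathbb S^1)$, so by Lemma~\ref{l:kin} the indicator $\chi_j(s,x)=\mathbf 1_{u_j(x)\cdot e^{is}>0}$ satisfies the kinetic formulation \eqref{eq:kin}. Theorem~\ref{t:reg_kin} then applies to each $u_j$ and yields, for any $\Omega'\subset\subset\Omega$,
\begin{align*}
\|D^h u_j\|_{L^3(\Omega'\cap(\Omega'-h))}\leq C(\Omega',\Omega)\bigl(1+\|\dv u_j\|_{L^2(\Omega)}\bigr)^{1/2}|h|^{1/2}\leq C'|h|^{1/2},
\end{align*}
uniformly in $j$. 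Together with $|u_j|\equiv 1$ this provides a uniform bound for $u_j$ in $B^{1/2}_{3,\infty}(\Omega')$. By the compact embedding $B^{1/2}_{3,\infty}(\Omega')\subset W^{s,3}(\Omega')$ for $s<1/2$ recalled after \eqref{eq:Bsp}, a diagonal extraction over an exhaustion of $\Omega$ produces a subsequence (not relabelled) and a function $u\in B^{1/2}_{3,\infty,\loc}(\Omega;\R^2)$ with $u_j\to u$ in $W^{s,3}_{\loc}(\Omega)$ and a.e.\ in $\Omega$. Pointwise a.e.\ convergence combined with $|u_j|=1$ forces $|u|=1$ a.e., so $u$ is $\mathbb S^1$-valued, and dominated convergence upgrades the convergence to $u_j\to u$ in $L^1(\Omega)$. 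A further (non-relabelled) extraction based on the uniform $L^2$ bound of $\dv u_j$ provides $v\in L^2(\Omega)$ with $\dv u_j\rightharpoonup v$ weakly in $L^2(\Omega)$; identifying $v=\dv u$ distributionally then gives $\dv u\in L^2(\Omega)$.

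Finally, fix $\Phi\in\mathrm{ENT}$. The chain rule \eqref{eq:chainENT} gives $\dv\Phi(u_j)=\lambda_\Phi(u_j)\dv u_j$, with $\lambda_\Phi,\Phi\in C^0(\mathbb S^1)$ bounded. The a.e.\ convergence $u_j\to u$ together with dominated convergence yields $\lambda_\Phi(u_j)\to\lambda_\Phi(u)$ and $\Phi(u_j)\to\Phi(u)$ strongly in every $L^p(\Omega)$, $p<\infty$. For $q\in(1,2)$ set $p=2q/(2-q)$ and split
\begin{align*}
\lambda_\Phi(u_j)\dv u_j = \lambda_\Phi(u)\dv u_j + \bigl(\lambda_\Phi(u_j)-\lambda_\Phi(u)\bigr)\dv u_j.
\end{align*}
The first summand converges weakly in $L^q$ to $\lambda_\Phi(u)\dv u$ by the weak $L^2$ convergence of $\dv u_j$, and the second converges strongly to zero in $L^q$ by Hölder's inequality, using the strong $L^p$ convergence of $\lambda_\Phi(u_j)$ and the uniform $L^2$ bound on $\dv u_j$. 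Hence $\lambda_\Phi(u_j)\dv u_j\rightharpoonup\lambda_\Phi(u)\dv u$ weakly in $L^q(\Omega)$. At the same time, the strong $L^p$ convergence of $\Phi(u_j)$ gives $\dv\Phi(u_j)\to\dv\Phi(u)$ in $\mathcal D'(\Omega)$, so the two limits coincide and $\dv\Phi(u)=\lambda_\Phi(u)\dv u\in L^q(\Omega)$. The extraction above depends only on the convergences of $u_j$ and $\dv u_j$, and therefore serves for every $\Phi\in\mathrm{ENT}$ simultaneously.

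The main obstacle is Theorem~\ref{t:reg_kin} itself, which supplies the crucial $h$-dependent compactness; once it is available, the remainder is a routine weak-strong product argument. The only genuinely delicate point is that the limit $u$ lies in general only in $B^{1/2}_{3,\infty,\loc}(\Omega;\mathbb S^1)$, so the chain rule \eqref{eq:chainENT} for $u$ must be recovered through the limit passage above rather than by direct application to $u$.
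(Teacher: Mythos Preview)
Your proof is correct and follows essentially the same route as the paper: apply Theorem~\ref{t:reg_kin} to each $u_j$ to obtain a uniform $B^{1/2}_{3,\infty,\loc}$ bound, extract a strongly convergent subsequence via the compact embedding, and pass to the limit in $\dv\Phi(u_j)=\lambda_\Phi(u_j)\dv u_j$ through the weak-strong product argument. Your presentation is in fact slightly more explicit than the paper's (diagonal extraction, the H\"older splitting with $p=2q/(2-q)$), but the ideas and the logical structure coincide.
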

\begin{proof}[Proof of Lemma~\ref{l:kinlim} from Theorem~\ref{t:reg_kin}]
Since $u_j\in H^1(\Omega;\mathbb S^1)$, it satisfies \eqref{eq:chainENT} and therefore the kinetic formulation \eqref{eq:kin} by Lemma~\ref{l:kin}.
We may therefore apply Theorem~\ref{t:reg_kin},
ensuring that
the sequence $(u_j)$ is bounded in
$ B^{1/2}_{3,\infty,\loc}(\Omega;\mathbb S^1)$.
By compactness of the embedding $B^{1/2}_{3,\infty,\loc}\subset L^1_{\loc}$, we can extract a subsequence converging a.e. in $\Omega$, hence in $L^1(\Omega)$ by dominated convergence.
Invoking also the weak compactness of the sequence $(\dv u_j)\subset L^2(\Omega)$,
we can therefore find
$u\in B^{1/2}_{3,\infty,\loc}(\Omega;\mathbb S^1)$
such that $\dv u\in L^2(\Omega)$ 
and
\begin{align*}
&
u_j\to u\qquad\text{in }L^1(\Omega)\text{ and }a.e.,
\\
&
\dv u_j\rightharpoonup \dv u \qquad\text{weakly in }L^2(\Omega)\,,
\end{align*}
along a non-relabeled subsequence.
Applying \eqref{eq:chainENT} to $u_j$, we have
\begin{align*}
\dv\Phi(u_j)=\lambda_\Phi(u_j)\dv u_j \qquad\forall \Phi\in\mathrm{ENT}\,.
\end{align*}
Since $\dv u_j\rightharpoonup \dv u$ weakly in $L^2(\Omega)$ and $\lambda_\Phi(u_j)\to \lambda_\Phi(u)$ strongly in $L^p(\Omega)$ for any $p\in [1,\infty)$, we
have
\begin{align*}
\lambda_\Phi(u_j)\dv u_j\rightharpoonup \lambda_\Phi(u)\dv u\quad\text{weakly in }L^q(\Omega)
\text{ for }1 < q<2\,.
\end{align*}
We also have
$\dv\Phi(u_j)\to\dv \Phi(u)$ in $\mathcal D'(\Omega)$ 
since $\Phi(u_j)\to\Phi(u)$ in $L^1(\Omega)$,
hence 
\begin{align*}
\dv \Phi(u_j)\rightharpoonup \dv\Phi(u)=\lambda_\Phi(u)\dv u\quad
\text{weakly in }L^q(\Omega)
\text{ for }1 < q<2\,,
\end{align*}
for all  $\Phi\in\mathrm{ENT}$.
\end{proof}

In the next subsections 
we provide the proof of 
Theorem~\ref{t:reg_kin},
starting with the compensation identity relating 
finite differences $D^h_x\chi$ with $\dv u$.

\subsection{A compensation identity}\label{ss:comp_id}

We denote by $T^h$ and  $D^h$ the translation and finite difference operators in the $x$ variable:
\begin{align}\label{eq:ThDh}
T^h\! f(x)=f(x+h),\quad D^h\! f =T^h\! f-f\,,
\end{align}
for any function $f(x)$,
and we will frequently use the identity
\begin{align}\label{eq:Dh_prod}
D^h(fg)=f D^h g +T^h g D^h f\,.
\end{align}
As noticed first in \cite{GP13} in a slightly different context, 
the kinetic formulation \eqref{eq:kin}
provides an identity 
for some  bilinear functions of $D^h\chi$,
see \eqref{eq:Delta}. 
This identity can be thought of as a 
generalization
of the integration by parts at the origin of compensated compactness phenomena \cite{murat78,tartar79},
see Remark~\ref{r:comp_id}.
Note also that this compensation identity does not use the specific form of the characteristic function $\chi$ in Lemma~\ref{l:kin},
but is valid for any  function $\chi$ satisfying the kinetic formulation \eqref{eq:kin}.

For any
 $\varphi\in L^1(\T)$ odd and $\pi$-periodic,
 and for the indicator function $\chi(s,x)=\mathbf 1_{u(x)\cdot e^{is}>0}$,  we define the function
\begin{align}\label{eq:Delta}
\Delta^{\varphi,\chi}(x,h)
=\frac 12 \int_{\T^2} \varphi(t-s)D^h\chi(t,x) D^h\chi(s,x)\sin(t-s)\, dtds,
\end{align}
 on $\Omega^*=\lbrace (x,h)\in \Omega\times\R^2\colon x+h\in\Omega\rbrace$.
(Note that this quantity is zero if $\varphi\in L^1(\T)$ is even or anti-$\pi$-periodic.)
The same quantity (without the 1/2 factor) is considered in \cite[\S~3.2]{GL20}, 
for the specific choice of $\varphi$ given by
\begin{align*}
\varphi_0(s)=\mathbf 1_{\cos(s)\sin(s)>0}-\mathbf 1_{\cos(s)\sin(s)<0}\,,
\end{align*}
  as can be seen by taking $(\xi,\eta)=(e^{is},e^{it})$ in the expression of $\Delta$ in  \cite[\S~3.2]{GL20}.
It is shown in \cite[Lemma~3.8]{GL20} that we have
\begin{align}\label{eq:Delta_varphi0_coerc}
\Delta^{\varphi_0,\chi}(x,h) \gtrsim |D^hu|^3(x)\,.
\end{align}
Given this coercivity property, it is clear that the regularity estimate \eqref{eq:reg_estim} will follow 
from an estimate on the integral of $\Delta^{\varphi_0,\chi}$ 
in terms of $\dv u$.
This is made possible by Lemma~\ref{l:kin} below, 
where we establish a compensation identity relating $\Delta^{\varphi,\chi}$ to $\dv u$ via the distribution $\Theta$ in the kinetic formulation \eqref{eq:kin}.

\begin{remark}\label{r:comp_id}
It was realized in \cite{GP13} 
(and adapted to $\mathbb S^1$-valued maps in \cite{GL20})
 that the kinetic formulation \eqref{eq:kin} 
provides a compensation identity satisfied by the
 quantity $\Delta^{\varphi,\chi}$, 
and which allows one to estimate its integral
 in an efficient way.
In the context of Burgers' equation, 
this compensation identity has been interpreted in \cite{GJO15} as a modification of the so-called Kármán-Howarth-Monin formula in fluid dynamics.
Another way to understand why some compensations can be expected in the quantity $\Delta^{\varphi,\chi}$ is to rewrite it as
\begin{align*}
\Delta^{\varphi,\chi} =\int_{\T^2}\varphi(t-s)D^h[\Phi^{\lbrace t\rbrace}(u)]\wedge D^h[\Phi^{\lbrace s\rbrace}(u)]\, dtds\,,
\end{align*}
where $\Phi^{\lbrace s\rbrace}(z)=e^{is}\mathbf 1_{z\cdot e^{is}>0}$ is the singular entropy which already appeared in the proof of Lemma~\ref{l:kin},
and $X\wedge Y=\det(X,Y)$ for any $X,Y\in\R^2$.
Classical div-curl estimates (see. e.g. \cite[\S~3.4]{golse09})
provide improved  control
 on the integral of $A\wedge B$ for vector fields $A,B$ with controlled divergence,
and would therefore provide control on the integral of
 $\Delta^{\varphi,\chi}$ if the entropies $\Phi^{\lbrace s\rbrace}$ were regular, 
as in \cite[\S~4.3]{GL20} or \cite[Proposition~6.1]{GMPS24}.
In that sense, the compensation identity \eqref{eq:comp_id} satisfied by $\Delta^{\varphi,\chi}$
 can also be interpreted as a formula of div-curl type, adapted to these singular entropies.
 \end{remark}

\begin{lemma}\label{l:comp_id}
Let $\Omega\subset\R^2$, $\T=\R/2\pi\Z$,
$\chi\in L^\infty(\T\times \Omega)$
such that $x\mapsto \chi(s,x)$ is $C^1$, and $\Theta\in L^1(\T\times\Omega)$ such that
\begin{align}\label{eq:g_kin}
e^{is}\cdot\nabla_x \chi =\Theta
\quad\text{a.e. in }\T\times\Omega\,.
\end{align}
Then, for any
 $\varphi\in L^1(\T)$ odd and $\pi$-periodic, the function
 $\Delta^{\varphi,\chi}$ defined in \eqref{eq:Delta}
satisfies
\begin{align}\label{eq:comp_id}
\frac{d}{d\tau}\Delta^{\varphi,\chi}(x,\tau\mathbf e_1)
=I^\tau(x) +\dv_x A^\tau (x)\,,
\end{align}
where $I^\tau$ and  $A^\tau=(A_1^\tau,A_2^\tau)$ are given by
\begin{align}
I^\tau(x)
&=
\int_{\T^2} (\Theta+T^{\tau\mathbf e_1} \Theta)(s,x) \varphi(t-s)D^{\tau\mathbf e_1}\chi(t,x)\sin(t) \, ds dt
\nonumber
\\
&\quad
-D^{\tau \mathbf e_1}\bigg[ \int_{\T^2} \Theta(s,x) \varphi(t-s)\chi(t,x)\sin (t) \, ds dt
\bigg]
\label{Itau}
\\
A_1^\tau(x)
&=
\int_{\T^2} \varphi(t-s)\sin(t)  \cos(s) 
T^{\tau\mathbf e_1}\chi(t,x)D^{\tau\mathbf e_1}\chi(s,x) \, ds dt
\label{Aone}
\\
A_2^\tau(x)
&
=\int_{\T^2} \varphi(t-s)\sin(t)\, \sin(s)\,
 \chi(t,x)D^{\tau\mathbf e_1}\chi(s,x)\, dsdt\,.\label{Atwo}
\end{align}
Moreover, 
the identity \eqref{eq:comp_id} is 
valid in the sense of distributions if 
 $\chi\in L^\infty(\T\times\Omega)$ 
 and $\Theta\in L^2(\Omega;\mathcal M(\T))$
satisfy \eqref{eq:g_kin}, 
and $\varphi\in C^0(\T)$ is odd and $\pi$-periodic. 
\end{lemma}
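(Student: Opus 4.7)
I would establish the identity first under the stronger assumption that $\chi$ is $C^1$ in $x$ and $\Theta\in L^1(\T\times\Omega)$, and then extend to the distributional statement by mollification. The first step is to symmetrize $\Delta^{\varphi,\chi}$: expanding $\sin(t-s)=\sin t\cos s-\cos t\sin s$ in \eqref{eq:Delta}, swapping the dummy variables $s\leftrightarrow t$, and using $\varphi(s-t)=-\varphi(t-s)$, the two contributions merge into
\begin{align*}
\Delta^{\varphi,\chi}(x,\tau\mathbf e_1)
=\int_{\T^2}\varphi(t-s)\sin t\cos s\,D^{\tau\mathbf e_1}\chi(t,x)\,D^{\tau\mathbf e_1}\chi(s,x)\,dt\,ds.
\end{align*}

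Next I would differentiate in $\tau$ via $\partial_\tau D^{\tau\mathbf e_1}\chi(s,x)=T^{\tau\mathbf e_1}\partial_1\chi(s,x)$ and substitute the kinetic relation \eqref{eq:g_kin} in the form $\cos s\,\partial_1\chi(s,x)=\Theta(s,x)-\sin s\,\partial_2\chi(s,x)$ (and its translate), but only where the $\cos s$ factor multiplies the $\partial_1\chi(s,\cdot)$ produced by differentiating $D^{\tau\mathbf e_1}\chi(s)$. A further $s\leftrightarrow t$ swap, combined with the oddness of $\varphi$, collapses $\partial_\tau\Delta^{\varphi,\chi}$ into three representative contributions: a ``leftover'' $(a)=\int\varphi(t-s)\sin t\cos s\,T^{\tau\mathbf e_1}\partial_1\chi(t)\,D^{\tau\mathbf e_1}\chi(s)\,dt\,ds$ destined for $\partial_1 A_1^\tau$, a term $(b)=\int\varphi(t-s)\sin t\,T^{\tau\mathbf e_1}\Theta(s)\,D^{\tau\mathbf e_1}\chi(t)\,dt\,ds$ that will feed into $I^\tau$, and a term $(c)=-\int\varphi(t-s)\sin t\sin s\,D^{\tau\mathbf e_1}\chi(t)\,T^{\tau\mathbf e_1}\partial_2\chi(s)\,dt\,ds$ that must be balanced against $\partial_2 A_2^\tau$.

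To identify $(a)+(b)+(c)$ with $I^\tau+\partial_1 A_1^\tau+\partial_2 A_2^\tau$, I would reverse-engineer from the right-hand side. Computing $\partial_1 A_1^\tau$ via the discrete Leibniz rule $\partial_1(T^{\tau\mathbf e_1}f\cdot D^{\tau\mathbf e_1}g)=T^{\tau\mathbf e_1}\partial_1 f\cdot D^{\tau\mathbf e_1}g+T^{\tau\mathbf e_1}f\cdot D^{\tau\mathbf e_1}\partial_1 g$ and applying the kinetic substitution to $\cos s\,D^{\tau\mathbf e_1}\partial_1\chi(s)=D^{\tau\mathbf e_1}\Theta(s)-\sin s\,D^{\tau\mathbf e_1}\partial_2\chi(s)$ yields exactly $(a)$, plus a $D^{\tau\mathbf e_1}\Theta$ piece and a $\sin s\,D^{\tau\mathbf e_1}\partial_2\chi(s)$ piece. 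Computing $\partial_2 A_2^\tau$ directly and adding it, the $D^{\tau\mathbf e_1}\partial_2\chi(s)$ terms consolidate through $-T^{\tau\mathbf e_1}\chi(t)+\chi(t)=-D^{\tau\mathbf e_1}\chi(t)$, leaving a lone $\sin t\sin s\,\partial_2\chi(t)\,D^{\tau\mathbf e_1}\chi(s)$ residue. An $s\leftrightarrow t$ swap on this residue together with the splitting $T^{\tau\mathbf e_1}\partial_2\chi=\partial_2\chi+D^{\tau\mathbf e_1}\partial_2\chi$ inside $(c)$ show the cancellation. The surviving $\Theta$-dependent terms reassemble into $I^\tau$ after \eqref{Itau} is expanded via the product rule $D^{\tau\mathbf e_1}(\Theta(s)\chi(t))=T^{\tau\mathbf e_1}\Theta(s)\,D^{\tau\mathbf e_1}\chi(t)+\chi(t)\,D^{\tau\mathbf e_1}\Theta(s)$.

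For the distributional statement I would mollify in $x$: set $\chi_\delta=\chi*\rho_\delta$ and $\Theta_\delta=\Theta*\rho_\delta$ for a standard mollifier $\rho_\delta$. Since convolution commutes with $\partial_x$, the pair $(\chi_\delta,\Theta_\delta)$ still satisfies \eqref{eq:g_kin} in the smooth sense, so the identity \eqref{eq:comp_id} holds pointwise for it. Each term in \eqref{eq:comp_id} is bilinear in $\chi$ through finite differences and linear in $\Theta$, with $\varphi\in C^0(\T)$ acting as a convolution kernel in $s$; passing $\delta\to 0$ is then routine using strong convergence $\chi_\delta\to\chi$ in $L^p_{\loc}(\T\times\Omega)$ and $\Theta_\delta\to\Theta$ in $L^2(\Omega;\mathcal M(\T))$ after pairing with $g(s)\in C^0(\T)$. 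The main obstacle will be the bookkeeping in the middle step: the several $s\leftrightarrow t$ swaps and discrete product-rule manipulations must be coordinated without a sign slip, and the kinetic substitution has to be applied asymmetrically---exactly once per $\cos(\cdot)\partial_1\chi(\cdot)$ factor and only on one of the two variables per term---otherwise one ends up with a doubled divergence pair rather than the single $(A_1^\tau,A_2^\tau)$ prescribed by \eqref{Aone}--\eqref{Atwo}.
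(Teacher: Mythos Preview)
Your proposal is correct and follows essentially the same approach as the paper. The paper differentiates $\Delta^{\varphi,\chi}$ first, then uses the $s\leftrightarrow t$ swap and oddness of $\varphi$ to collapse the two product-rule terms, expands $\sin(t-s)$, recognizes $\partial_1 A_1^\tau$ in the expression, applies the kinetic substitution $\cos(s)\partial_1\chi(s)=\Theta(s)-\sin(s)\partial_2\chi(s)$, and then identifies $\partial_2 A_2^\tau$ and $I^\tau$; you merely reorder by symmetrizing $\Delta^{\varphi,\chi}$ before differentiating and then match against $\partial_1 A_1^\tau+\partial_2 A_2^\tau$ computed from the definitions, but the algebraic content and the mollification-in-$x$ argument for the distributional case are identical.
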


The compensation identity in Lemma~\ref{l:comp_id} is the same as in \cite[Lemma~3.8]{GL20}, 
but the quantity $I^\tau$ in \eqref{eq:comp_id} 
is rewritten
in a way that makes further compensations appear.
These further compensations were not needed in \cite{GL20}, but were already used in \cite[Lemma~4.8]{LP23}.

\begin{proof}[Proof of Lemma~\ref{l:comp_id}]
The last statement follows from applying the first statement to $\chi$ and $\Theta$ mollified with respect to $x$, which preserves \eqref{eq:g_kin}, and passing to the limit in \eqref{eq:comp_id}.
Thus we assume that $x\mapsto \chi(s,x)$ is $C^1$ and that $\Theta\in L^1(\T\times\Omega)$.

The identity \eqref{eq:comp_id} follows  from the calculations in \cite[Lemma~3.9]{GL20},
which we reproduce here for the readers' convenience.
To simplify notations we simply write $\Delta=\Delta^{\varphi,\chi}(x,\tau \mathbf e_1)$.
First we  have
\begin{align*}
\frac{d}{d\tau}\Delta
&
=
\frac 12 \int_{\T^2} \varphi(t-s)T^{\tau\mathbf e_1}\partial_1\chi(t,x) D^{\tau\mathbf e_1}\chi(s,x)\sin(t-s)\, dtds
\\
&
\quad
+
\frac 12 \int_{\T^2} \varphi(t-s)D^{\tau\mathbf e_1}\chi(t,x) T^{\tau\mathbf e_1}\partial_1\chi(s,x)\sin(t-s)\, dtds\,.
\end{align*}
Changing variable $(s,t)\mapsto (t,s)$ in the second integral and using the fact that $\varphi$ is odd, this becomes
\begin{align*}
\frac{d}{d\tau}\Delta
&
=
 \int_{\T^2} \varphi(t-s)T^{\tau\mathbf e_1}\partial_1\chi(t,x) D^{\tau\mathbf e_1}\chi(s,x)\sin(t-s)\, dtds
 \\
 &
=
 \int_{\T^2} \varphi(t-s)T^{\tau\mathbf e_1}\partial_1\chi(t,x) D^{\tau\mathbf e_1}\chi(s,x)\sin(t)\cos(s)\, dtds
 \\
 &\quad
 -
  \int_{\T^2} \varphi(t-s)T^{\tau\mathbf e_1}\partial_1\chi(t,x) D^{\tau\mathbf e_1}\chi(s,x)\cos(t)\sin(s)\, dtds
  \\
  &
=\partial_1 A_1^\tau
-\int_{\T^2} \varphi(t-s)T^{\tau\mathbf e_1}\chi(t,x) D^{\tau\mathbf e_1}\partial_1\chi(s,x)\sin(t)\cos(s)\, dtds
\\
&
\quad
 -
  \int_{\T^2} \varphi(t-s)T^{\tau\mathbf e_1}\partial_1\chi(t,x) D^{\tau\mathbf e_1}\chi(s,x)\cos(t)\sin(s)\, dtds\,,
\end{align*}
where $A_1^\tau$ the first component of the vector field $A^\tau$ in \eqref{eq:comp_id}.
Using again the oddness of $\varphi$ and the change of variable $(s,t)\mapsto (t,s)$ in the last integral, 
we find
\begin{align*}
\frac{d}{d\tau}\Delta
&
=\partial_1 A_1^\tau
+\int_{\T^2} \varphi(t-s)T^{\tau\mathbf e_1}\chi(t,x)\partial_1\chi(s,x)\sin(t)\cos(s)\, dtds
\\
&
\quad
-  \int_{\T^2} \varphi(t-s)\chi(t,x)
T^{\tau\mathbf e_1}\partial_1\chi(s,x) \cos(s)\sin(t)\, dtds\,.
\end{align*}
Now we use the kinetic equation \eqref{eq:g_kin}, which gives
\begin{align*}
\cos(s)\partial_1\chi(s,x) =-\sin(s)\partial_2\chi(s,x) +\Theta(s,x)\,,
\end{align*}
to rewrite the above as
\begin{align*}
\frac{d}{d\tau}\Delta
&
=\partial_1 A_1^\tau
+\int_{\T^2}\Theta(s,x) \varphi(t-s)T^{\tau\mathbf e_1}\chi(t,x)\sin(t)\, dtds
\\
&
\quad
-  \int_{\T^2}T^{\tau \mathbf e_1}\Theta(s,x) \varphi(t-s)\chi(t,x)
\sin(t)\, dtds
\\
&
\quad
-\int_{\T^2} \varphi(t-s)T^{\tau\mathbf e_1}\chi(t,x)\partial_2\chi(s,x)\sin(t)\sin(s)\, dtds
\\
&
\quad
+  \int_{\T^2} \varphi(t-s)\chi(t,x)
T^{\tau\mathbf e_1}\partial_2\chi(s,x) \sin(s)\sin(t)\, dtds\,.
\end{align*}
Using again the oddness of $\varphi$ and 
the change of variable $(s,t)\mapsto (t,s)$, we see that the last two lines correspond to $\partial_2 A_2^\tau$,
where $A_2^\tau$ is the second component of the vector field $A^\tau$ in \eqref{eq:comp_id}.
Rewriting slightly the first two integrals, we are left with
\begin{align*}
\frac{d}{d\tau}\Delta
&
=\dv  A^\tau
+\int_{\T^2}\Theta(s,x) \varphi(t-s)D^{\tau\mathbf e_1}\chi(t,x)\sin(t)\, dtds
\\
&
\quad
-  \int_{\T^2}D^{\tau \mathbf e_1}\Theta(s,x) \varphi(t-s)\chi(t,x)
\sin(t)\, dtds\,.
\end{align*}
Using the identity \eqref{eq:Dh_prod} to rewrite the last line, we obtain exactly \eqref{eq:comp_id}.
\end{proof}

%

\subsection{Estimating the right-hand side of the compensation identity}\label{ss:RHScomp}

For any odd $\pi$-periodic $\varphi\in C^0(\R)$
we can apply the compensation identity of Lemma~\ref{l:comp_id} 
to the kinetic formulation \eqref{eq:kin} obtained in Lemma~\ref{l:kin}.
In this subsection we estimate the quantities 
 $I^\tau$ and $A^\tau$ appearing in \eqref{eq:comp_id}.

\begin{lemma}\label{l:estim_A}
Let $u\colon\Omega\to\mathbb S^1$ measurable and $\chi(s,x)=\mathbf 1_{e^{is}\cdot u(x) >0}$. 
Then the vector field $A^\tau$ in \eqref{eq:comp_id} satisfies
\begin{align*}
|A^\tau|\lesssim \|\varphi\|_{L^1(\T)} \, |D^{\tau e_1} u|
\qquad\text{a.e. in }\Omega\cap(\Omega-\tau\mathbf e_1)\,.
\end{align*}
\end{lemma}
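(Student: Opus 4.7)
The plan is to bound each component $A_i^\tau$ by the absolute integrand and reduce everything to controlling the $L^1(\T)$-norm of $s\mapsto D^{\tau\mathbf e_1}\chi(s,x)$ by $|D^{\tau\mathbf e_1}u(x)|$, after which Fubini and a trivial convolution bound yield the $\|\varphi\|_{L^1(\T)}$ factor.

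First I would write down the pointwise-in-$s$ interpretation of $\chi$: if $u(x)=e^{i\theta(x)}$, then $\chi(s,x)=\mathbf 1_{\cos(s-\theta(x))>0}$ is the indicator of the half-circle $s\in(\theta(x)-\pi/2,\theta(x)+\pi/2)$ (mod~$2\pi$). Consequently, for two points $x$ and $x+\tau\mathbf e_1$ the function $s\mapsto D^{\tau\mathbf e_1}\chi(s,x)$ is the signed indicator of the symmetric difference of two half-circles whose centres differ by the geodesic angle between $u(x)$ and $u(x+\tau\mathbf e_1)$. This symmetric difference has Lebesgue measure equal to twice that geodesic distance on $\mathbb S^1$, which is comparable to the chordal distance $|D^{\tau\mathbf e_1}u(x)|$. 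This gives the key pointwise bound
\begin{equation*}
\|D^{\tau\mathbf e_1}\chi(\cdot,x)\|_{L^1(\T)}\;\lesssim\;|D^{\tau\mathbf e_1}u(x)|\qquad\text{for a.e.\ }x\in\Omega\cap(\Omega-\tau\mathbf e_1).
\end{equation*}

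Second, I would estimate $A_1^\tau$ by pulling absolute values inside the integral and using that $|\sin t|,|\cos s|,|T^{\tau\mathbf e_1}\chi(t,x)|\leq 1$:
\begin{equation*}
|A_1^\tau(x)|\;\leq\;\int_{\T}\!\int_{\T}|\varphi(t-s)|\,|D^{\tau\mathbf e_1}\chi(s,x)|\,dt\,ds\;=\;\|\varphi\|_{L^1(\T)}\,\|D^{\tau\mathbf e_1}\chi(\cdot,x)\|_{L^1(\T)},
\end{equation*}
where the last equality uses Fubini together with the translation-invariance of the Lebesgue measure on $\T$ to carry out the $t$-integral. Combining with the key pointwise bound yields $|A_1^\tau(x)|\lesssim\|\varphi\|_{L^1(\T)}\,|D^{\tau\mathbf e_1}u(x)|$. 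The same argument applied to $A_2^\tau$, with $|\chi(t,x)|\leq 1$ and $|\sin s|\leq 1$ in place of $|T^{\tau\mathbf e_1}\chi(t,x)|\leq 1$ and $|\cos s|\leq 1$, gives the matching bound for the second component.

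The only substantive point in the argument is the first one, the comparison $\|D^{\tau\mathbf e_1}\chi(\cdot,x)\|_{L^1(\T)}\lesssim|D^{\tau\mathbf e_1}u(x)|$; everything else is a crude pointwise estimate followed by Fubini. I do not expect any real obstacle here, only the need to be careful that the comparison between the geodesic and chordal distances on $\mathbb S^1$ holds uniformly (a harmless absolute constant, since the geodesic distance is bounded by $\pi$).
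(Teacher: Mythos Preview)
Your proof is correct. The key step, $\|D^{\tau\mathbf e_1}\chi(\cdot,x)\|_{L^1(\T)}\lesssim|D^{\tau\mathbf e_1}u(x)|$, is exactly the observation that the symmetric difference of two half-circles on $\T$ has measure equal to twice the geodesic distance between their centres, and the geodesic distance is bounded by $\tfrac{\pi}{2}$ times the chordal distance; the rest is indeed just Fubini.

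The paper takes a closely related but slightly different route: rather than pulling absolute values inside, it first carries out the $t$ and $s$ integrations over the half-circles determined by $\chi$ and $T^{\tau\mathbf e_1}\chi$, thereby rewriting $A_i^\tau(x)=F_i(\,\cdot\,,T^{\tau\mathbf e_1}u(x))-F_i(\,\cdot\,,u(x))$ for explicit functions $F_i\colon\mathbb S^1\times\mathbb S^1\to\R$, and then bounds $|\partial_\alpha F_i(e^{i\theta},e^{i\alpha})|\lesssim\|\varphi\|_{L^1(\T)}$ by differentiating the integration limits. That Lipschitz estimate on $F_i$ is, of course, the infinitesimal form of your symmetric-difference bound. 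Your argument is a bit more direct and avoids introducing the auxiliary functions $F_i$; the paper's version, on the other hand, makes the dependence of $A^\tau$ on $u$ and $T^{\tau\mathbf e_1}u$ explicit, which can be convenient if one later wants to exploit further structure or track constants.
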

\begin{proof}[Proof of Lemma~\ref{l:estim_A}]
Recalling the definitions of $A_1^\tau$ and $A_2^\tau$ from \eqref{Aone} and \eqref{Atwo}, as well as the finite difference operator $D^h$ and the translation operator $T^h$ defined in \eqref{eq:ThDh}, one finds that $A_1^\tau$ and $A_2^\tau$
can be rewritten as
\begin{align*}
A_1^\tau
&
=F_1(T^{\tau\mathbf e_1}u,T^{\tau\mathbf e_1}u)-F_1(T^{\tau\mathbf e_1}u,u)\,,
\\
A_2^\tau
&
=
F_2(u,T^{\tau\mathbf e_1}u)-F_2(u,u)\,,
\end{align*}
where $F_1,F_2\colon \mathbb S^1\times\mathbb S^1\to\R$ are given by
\begin{align*}
F_1(e^{i\theta},e^{i\alpha})
&
=\int_{\alpha-\frac\pi 2}^{\alpha+\frac\pi 2}
\int_{\theta-\frac\pi 2}^{\theta+\frac\pi 2}
\varphi(t-s) \sin (t) \, dt\, \cos (s) \, ds\,,
\\
F_2(e^{i\theta},e^{i\alpha})
&
=\int_{\alpha-\frac\pi 2}^{\alpha+\frac\pi 2}\int_{\theta-\frac\pi 2}^{\theta+\frac\pi 2}\varphi(t-s) \sin (t) \, dt\,\sin (s) \, ds\,.
\end{align*}
These satisfy
\begin{align*}
\left|\frac{d}{d\alpha}F_j(e^{i\theta},e^{i\alpha})
\right| \lesssim  \int_{\T}|\varphi|\, ds,
\end{align*}
and this implies the claimed inequality on $A^\tau$.
\end{proof}

\begin{lemma}\label{l:estim_I}
Let $u\in H_{\dv}(\Omega;\mathbb S^1)$ and $\chi(s,x)=\mathbf 1_{e^{is}\cdot u(x) >0}$. 
Then, for any $\eta\in C_c^1(\Omega;[0,1])$, 
the function $I^\tau$ in \eqref{eq:comp_id} satisfies
\begin{align*}
\int_{\Omega} I^\tau\,\eta^2\, dx
&
 \lesssim  
\|\varphi\|_{L^\infty(\T)}
\|\eta^2
 |D^{\tau\mathbf e_1}u|
 \big\|_{L^2(\Omega)} 
\|\dv u\|_{L^2(\Omega)}
\\
&
\quad
+ 
|\tau | 
\|\varphi\|_{L^1(\T)}
\|\nabla \eta\|_\infty 
\|\dv u\|_{L^1(\Omega)}
\,,
\end{align*}
for $|\tau|\leq \dist(\supp(\eta),\partial\Omega)$.
\end{lemma}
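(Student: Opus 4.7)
I will split $I^\tau$ as written in \eqref{Itau} into two contributions: a ``double entropy'' term
\[
I_1^\tau(x) = \int_{\T^2} (\Theta+T^{\tau\mathbf e_1}\Theta)(s,x)\, \varphi(t-s)\, D^{\tau\mathbf e_1}\chi(t,x)\, \sin(t)\, ds\, dt,
\]
and a ``total finite difference'' term $I_2^\tau(x) = -D^{\tau\mathbf e_1} G(x)$, where
\[
G(x) := \int_{\T^2} \Theta(s,x)\, \varphi(t-s)\, \chi(t,x)\, \sin(t)\, ds\, dt.
\]
In both pieces, I will use the explicit form $\Theta(s,x) = (\delta_{\theta(x)+\pi/2}(s)+\delta_{\theta(x)-\pi/2}(s))\,\dv u(x)$ from Lemma~\ref{l:kin} to evaluate the $s$-integral. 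The $\pi$-periodicity of $\varphi$ makes the two Dirac contributions coincide, so every $\Theta$-pairing collapses to $2\,\dv u(x)\,\int_\T \varphi(t-\theta(x)-\pi/2)\,(\cdots)\,dt$.

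For $I_1^\tau$ this produces, pointwise, a bound
\[
|I_1^\tau(x)| \lesssim \|\varphi\|_{L^\infty(\T)}\, \big(|\dv u(x)|+|\dv u(x+\tau\mathbf e_1)|\big)\int_\T |D^{\tau\mathbf e_1}\chi(t,x)|\,dt.
\]
The geometric core of the argument is the elementary observation that $\{t\in\T:\chi(t,y)\neq\chi(t,z)\}$ is the symmetric difference of two semicircles centered at the arguments of $u(y)$ and $u(z)$, hence has measure comparable to the angular gap, so
\[
\int_\T |D^{\tau\mathbf e_1}\chi(t,x)|\,dt \lesssim |D^{\tau\mathbf e_1} u(x)|.
\]
Multiplying by $\eta^2(x)$, applying Cauchy--Schwarz, and using translation invariance to handle the $T^{\tau\mathbf e_1}\Theta$ half (which costs nothing since $\eta$ is compactly supported and $|\tau|\leq \dist(\supp\eta,\partial\Omega)$), I obtain the first term on the right-hand side of the claimed inequality.

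For $I_2^\tau$ I perform a discrete integration by parts in $x$: after the change of variable $y=x+\tau\mathbf e_1$,
\[
\int_\Omega \eta^2(x)\, D^{\tau\mathbf e_1} G(x)\, dx = \int_\Omega G(y)\, D^{-\tau\mathbf e_1}(\eta^2)(y)\, dy,
\]
with no boundary terms because $\supp\eta$ lies at distance at least $|\tau|$ from $\partial\Omega$. Then $|D^{-\tau\mathbf e_1}(\eta^2)|\leq 2|\tau|\,\|\nabla\eta\|_\infty$ (using $0\leq\eta\leq 1$). It remains to bound $G$ in $L^1$: evaluating the $s$-delta gives $|G(x)|\lesssim \|\varphi\|_{L^1(\T)}|\dv u(x)|$, since $|\chi\sin|\leq 1$ and the remaining $t$-integral is bounded by $\|\varphi\|_{L^1(\T)}$. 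Combining yields the second term of the stated bound. The only subtle ingredient is the symmetric-difference estimate on $\int_\T|D^h\chi(t,\cdot)|\,dt$, which is precisely what converts a coarse $L^\infty(\T)$ bound on $\varphi$ into the weighted $L^2$ quantity $\|\eta^2|D^{\tau\mathbf e_1}u|\|_{L^2}$ that will later mesh with the coercivity $\Delta^{\varphi_0,\chi}\gtrsim |D^h u|^3$ in \textsection\ref{ss:pf_reg_kin}.
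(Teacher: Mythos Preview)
Your proof is correct and follows essentially the same approach as the paper. The paper packages the key estimate slightly differently: it introduces $\Lambda(s,x)=\int_\T\varphi(t-s)\chi(t,x)\sin t\,dt=F(e^{is},u(x))$ and uses the Lipschitz bound $|\partial_\theta F|\le 2\|\varphi\|_{L^\infty}$ to get $\sup_s|D^{\tau\mathbf e_1}\Lambda|\lesssim\|\varphi\|_{L^\infty}|D^{\tau\mathbf e_1}u|$, whereas you bound $\varphi$ and $\sin$ pointwise first and then invoke the symmetric-difference estimate $\int_\T|D^h\chi(t,\cdot)|\,dt\lesssim|D^h u|$. These are two phrasings of the same geometric fact (the $\theta$-derivative of $F$ picks up exactly the endpoints of the semicircle), and the remainder of the argument---discrete integration by parts on the $D^{\tau\mathbf e_1}G$ term, the bound $|G|\lesssim\|\varphi\|_{L^1}|\dv u|$, and Cauchy--Schwarz---is identical.
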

\begin{proof}
Integrating the expression \eqref{Itau} for $I^\tau$
 against $\eta^2$ and performing a discrete integration by parts using \eqref{eq:Dh_prod},
we obtain
\begin{align*}
\int_\Omega I^\tau\,\eta^2\, dx
&
=
\int_\Omega\int_{\T} (\Theta+T^{\tau\mathbf e_1} \Theta)(s,x) D^{\tau\mathbf e_1}\Lambda(s,x) \, ds \,\eta^2\, dx
\\
&
\quad
+
\int_\Omega\int_{\T} \Theta(s,x) T^{\tau\mathbf e_1}\Lambda(s,x) \, ds \, D^{\tau\mathbf e_1}[\eta^2]\, dx\,,
\\
\text{where }
\Lambda(s,x)&=\int_\T\varphi(t-s)\chi(t,x)\sin(t) \,  dt
\,.
\end{align*}
Recalling that  $\chi(t,x)=\mathbf 1_{e^{it}\cdot u(x)>0}$,
we see that
\begin{align*}
\Lambda(s,x)=F(e^{is},u(x)),\quad
F(e^{is},e^{i\theta})=\int_{\theta-\frac\pi 2}^{\theta +\frac\pi 2}\varphi(t-s)\sin(t)\, dt\,.
\end{align*}
(Note that $F$ is well-defined: this expression does not depend on the choice of $s$ modulo $2\pi$ since $\varphi$ is $2\pi$-periodic, nor on the choice of $\theta$ modulo $2\pi$ since 
$\int_0^{2\pi}\varphi(t-s)\sin(t)\,dt =0$  as a consequence of the $\pi$-periodicity of $\varphi$.)
From the inequalities
\begin{align*}
|F|\leq \|\varphi\|_{L^1(\T)},
\quad 
\bigg|\frac{d}{d\theta}F(e^{is},e^{i\theta})\bigg|
\leq 2\|\varphi\|_{L^\infty(\T)}\,,
\end{align*}
we deduce therefore 
\begin{align*}
\sup_{s\in\T}|\Lambda(s,x)| &\lesssim \|\varphi\|_{L^1(\T)}\,,
\quad
\sup_{s\in\T}|D^{\tau \mathbf e_1} \Lambda(s,x)|
\lesssim \|\varphi\|_{L^\infty(\T)} |D^{\tau\mathbf e_1}u|(x)\,.
\end{align*}
Since the explicit expression 
\begin{align*}
&\Theta(s,x)=
\big(\delta_{\theta(x)+\frac\pi 2}(s)
 + \delta_{\theta(x)-\frac\pi 2}(s)  \big)\dv u (x) \,,
\end{align*}
obtained in Lemma~\ref{l:kin}
implies
\begin{align*}
\int_\T \Theta(s,x)G(s,y)\, ds
&
\leq 2 |\dv u|(x) \sup_{s\in\T} |G(s,y)|\,,
\end{align*}
for any bounded function $G$,
we deduce that
\begin{align*}
\int_\Omega I\,\eta^2\, dx
&
\lesssim
\|\varphi\|_{L^\infty(\T)} \int_\Omega
 \big(
|\dv u|+ T^{\tau\mathbf e_1} |\dv u|
\big) |D^{\tau\mathbf e_1}u| \,\eta^2\, dx
\\
&
\quad
+\|\varphi\|_{L^1(\T)}
\int_\Omega |\dv u|\, | D^{\tau\mathbf e_1}[\eta^2]|\, dx.
\end{align*}
Using Cauchy-Schwarz' inequality and the fact that $|D^{\tau\mathbf e_1}[\eta^2]|\leq 2|\tau| \|\nabla\eta\|_\infty$ since $|\eta|\leq 1$,
we infer
\begin{align*}
\int_\Omega I\,\eta^2\, dx
&
\lesssim
\|\varphi\|_{L^\infty(\T)}
\|\dv u\|_{L^2(\supp(\eta))}
\|\eta^2 |D^{\tau\mathbf e_1}u|\|_{L^2(\Omega)}
\\
&
\quad 
+\|\varphi\|_{L^\infty(\T)}
\|T^{\tau\mathbf e_1}|\dv u| \|_{L^2(\supp(\eta))}
\|\eta^2 |D^{\tau\mathbf e_1}u|\|_{L^2(\Omega)}
\\
&
\quad
+|\tau| \|\varphi\|_{L^1(\T)} \|\nabla\eta\|_\infty \|\dv u\|_{L^1(\Omega)},
\end{align*}
which implies the conclusion since
$\|T^{\tau\mathbf e_1}|\dv u| \|_{L^2(\supp(\eta))}\leq \|\dv u\|_{L^2(\Omega)}$.
\end{proof}

\subsection{Proof of the regularity estimate}
\label{ss:pf_reg_kin}

\begin{proof}[Proof of Theorem~\ref{t:reg_kin}]
Thanks to Lemma~\ref{l:comp_id},
for the functions $\chi$ and $\Theta$ given by Lemma~\ref{l:kin}, and any
odd, $\pi$-periodic $\varphi\in C^0(\T)$, we have the compensation identity \eqref{eq:comp_id}.
Integrating this compensation identity against $\eta^2\,dx$ for some $\eta\in C_c^1(\Omega;[0,1])$
and then with respect to $\tau$, we deduce
\begin{align*}
\int_\Omega \Delta^{\varphi,\chi}(x,\tau \mathbf e_1)\eta^2\, dx
&
=
\int_0^\tau 
\bigg(
\int_{\Omega} I^{\tau'}\eta^2\, dx \, d\tau'
-\int_{\Omega} \langle A^{\tau'},\nabla\eta\rangle \eta\, dx
\bigg) \, d\tau'\,.
\end{align*}
Thanks to Lemma~\ref{l:estim_I} we have
\begin{align*}
\int_{\Omega} I^{\tau'}\eta^2\, dx
&
\lesssim
|\tau' | 
\|\varphi\|_{L^1(\T)}
\|\nabla \eta\|_\infty 
\|\dv u\|_{L^1(\Omega)}
\\
&
\quad
+
\|\varphi\|_{L^\infty(\T)}
\|\dv u\|_{L^2(\Omega)}
\|\eta^2
 |D^{\tau'\mathbf e_1}u|
 \big\|_{L^2(\Omega)} 
\,,
\end{align*}
and thanks to Lemma~\ref{l:estim_A} we have
\begin{align*}
-\int_{\Omega} \langle A^{\tau'},\nabla\eta\rangle \eta\, dx
&
\lesssim
\|\varphi\|_{L^1(\T)} \|\nabla\eta\|_\infty \|\eta |D^{\tau'\mathbf e_1}u| \|_{L^1}\,.
\end{align*}
Plugging these  two inequalities into the previous identity
and using also that $\|\varphi\|_{L^1}\lesssim \|\varphi\|_{L^\infty}$, we obtain
\begin{align}
\int_\Omega \Delta^{\varphi,\chi}(x,\tau \mathbf e_1)\eta^2\, dx
&
\lesssim
\tau^2\|\varphi\|_{L^\infty}\|\nabla\eta\|_\infty\|\dv u\|_{L^1} 
\nonumber
\\
&
\quad
+  |\tau|
\|\varphi\|_{L^\infty}
\|\dv u\|_{L^2} \sup_{|\tau'|\leq |\tau|} \|\eta^2|D^{\tau'\mathbf e_1}u| \|_{L^2}
\nonumber
\\
&
\quad
+|\tau| \|\varphi\|_{L^\infty} \|\nabla\eta\|_\infty \sup_{|\tau'|\leq |\tau|} \|\eta |D^{\tau'\mathbf e_1}u| \|_{L^1}\,,
\label{eq:intDelta_first_estim}
\end{align}
for $|\tau|\leq \dist(\supp(\eta),\partial\Omega)$.
We have established this inequality for any continuous, odd and $\pi$-periodic function $\varphi$, 
but claim that it is also valid
 for any 
  odd, $\pi$-periodic function $\varphi\in L^\infty(\T)$.
Consider indeed a sequence of smooth, odd, $\pi$-periodic functions $\varphi_j\to\varphi$ a.e. on $\T$ and such that $\|\varphi_j\|_{L^\infty}\leq \|\varphi\|_{L^\infty}$ and apply \eqref{eq:intDelta_first_estim} to $\varphi_j$.
The right-hand side depends linearly on $\|\varphi_j\|_{L^\infty}\leq \|\varphi\|_{L^\infty}$,
and the left-hand side converges by dominated convergence, so we do conclude that \eqref{eq:intDelta_first_estim} is satisfied by the limit map $\varphi\in L^\infty(\T)$.
 
 Next we make a specific choice of such a function,
given by
\begin{align*}
\varphi(t)=1 \quad \text{for }0<t\leq \frac\pi 2\,,
\end{align*}
and extended as an odd $\pi$-periodic function.
As recalled above \eqref{eq:Delta_varphi0_coerc},
for this choice of function $\varphi$,
the quantity $\Delta^{\varphi,\chi}$ satisfies
\begin{align*}
\Delta^{\varphi,\chi}(x,h) \gtrsim |D^h u(x)|^{3}\,,
\end{align*}
hence \eqref{eq:intDelta_first_estim} implies
\begin{align*}
\int_\Omega |D^{\tau \mathbf e_1} u|^3 \eta^2\, dx
&
\lesssim
\tau^2\|\nabla\eta\|_\infty\|\dv u\|_{L^1} 
\\
&
\quad
+  |\tau|
\|\dv u\|_{L^2} \sup_{|\tau'|\leq |\tau|} \|\eta^2|D^{\tau'\mathbf e_1}u| \|_{L^2}
\\
&
\quad
+|\tau|\|\nabla\eta\|_\infty \sup_{|\tau'|\leq |\tau|} \|\eta |D^{\tau'\mathbf e_1}u| \|_{L^1}\,.
\end{align*}
Setting
\begin{align*}
X :=\sup_{|\tau'|\leq |\tau|}
\|\eta |D^{\tau'\mathbf e_1}u| \|_{L^{3}}\,,
\end{align*}
and using Hölder's inequality
and the fact that $|\eta|\leq 1$,
we deduce
\begin{align*}
X^{3}
&
\lesssim
\tau^2\|\nabla\eta\|_\infty
|\Omega|^{\frac 12}
\|\dv u\|_{L^2} 
+  |\tau|
\|\dv u\|_{L^2} |\Omega|^{\frac{1}{6}} X
+|\tau| \|\nabla\eta\|_\infty
 |\Omega|^{\frac{2}{3}} X
 \,.
\end{align*}
To estimate the last two terms we use Young's inequality
\begin{align*}
ab\leq \frac{2}{3}\lambda^{-\frac 12}a^{\frac{3}{2}} + \frac{\lambda}{3}b^{3}\qquad\forall a,b\geq 0\,,\lambda > 0\,.
\end{align*}
We 
obtain, 
for any $\lambda>0$,
\begin{align*}
X^{3}
&
\lesssim
\tau^2\|\nabla\eta\|_\infty
|\Omega|^{\frac 12}
\|\dv u\|_{L^2} 
+ \lambda^{-\frac 12}
|\tau|^{\frac{3}{2}}
\big(
\|\dv u\|_{L^2}^{\frac{3}{2}} |\Omega|^{\frac{1}{4}} 
+\|\nabla\eta\|_\infty^{\frac{3}{2}}
 |\Omega|
 \big)
 \\
 &
 \quad
+ 2\lambda X^{3}\,.
\end{align*}
Choosing $\lambda$ small enough, 
we infer
\begin{align*}
X^{3}
&
\lesssim
C\,(1+\|\dv u\|_{L^2})^{\frac{3}{2}}
|\tau|^{\frac{3}{2}}
\,,
\end{align*}
for some $C=C(\Omega,\eta)>0$.
Recall $\Omega'\subset\subset\Omega$.
Fixing $\eta\in C_c^1(\Omega;[0,1])$ such that $\eta=1$ on $\Omega'$, 
this gives exactly the estimate \eqref{eq:reg_estim},
that is,
\begin{align*}
\|D^h u\|_{L^3(\Omega'\cap(\Omega'-h))} \leq C (1+\|\dv u\|_{L^2(\Omega)})^{\frac{1}{2}}|h|^{\frac{1}{2}}\,,
\end{align*}
for $h=\tau \mathbf e_1$ such that 
$|h|\leq \delta_0$
for some $\delta_0=\delta_0(\Omega,\Omega')>0$.
That last restriction can be automatically removed, 
using simply for $|h|\geq \delta_0$ that $|D^h u|\leq 2$.
So this is valid for all $h=\tau \mathbf e_1$, $\tau\in\R$.
Finally, the direction $\mathbf e_1$ does not play a particular role, we can apply this same estimate to $\tilde u(x)=R^{-1}u(Rx)$ for any rotation $R$
(noting that this preserves the $L^2$ norm of the divergence)
 and deduce \eqref{eq:reg_estim} for $h=\tau R\mathbf e_1$,
hence for all $h\in\R^2$.
\end{proof}

\subsection{Proof of Corollary~\ref{c:jac}}
\label{ss:csq_reg_kin}

In this section we 
give the proof of Corollary~\ref{c:jac}
about the Jacobian.

\begin{proof}[Proof of Corollary~\ref{c:jac}]
Recall that the Jacobian distribution $Ju$ is well-defined and continuous on $W^{1/3,3}_{\loc}(\Omega;\mathbb S^1)$ thanks to \cite[Corollary~8.1]{BM21},
coincides with $Ju=(1/2)\curl(u\wedge \nabla u)$ for $u\in W^{1,1}_{\loc}\cap L^\infty$,
and with $Ju=\det(\nabla u)=0$ for $u\in H^1_{\loc}(\Omega;\mathbb S^1)$.

Let $u\in \mathcal S$, then there exists $u_\e\to u$ in $\mathcal D'(\Omega)$ such that $\sup E_\e(u_\e)<\infty$ along a sequence $\e\to 0$.
Thanks to Corollary~\ref{c:Ws3} we have $u_\e\to u$ in $W^{1/3,3}(\Omega')$
and therefore $Ju=\lim Ju_\e=0$ in $\mathcal D'(\Omega')$ for every $\Omega'\subset\subset\Omega$, hence $Ju=0$ in $\mathcal D'(\Omega)$.

 We also note that since
$B^{1/2}_{3,\infty,\loc}(\Omega)\subset W^{3/7,3}_{\loc}(\Omega)$,
we have
 $u\in W^{3/7,3}_{\loc}(\Omega;\mathbb S^1)$ thanks to Theorem~\ref{t:reg_estim},
so \cite[Corollary~14.3]{BM21} implies 
 the existence of
$v\in C^\infty(\Omega;\mathbb S^1)$ and 
$\varphi\in W^{2/5,3}_{\loc}(\Omega;\R)$ 
such that $u=e^{i\varphi}v$.
(Here we used that the regularity $W^{3/7,3}+W^{1,9/7}$ 
 given by \cite[Corollary~14.3]{BM21} for the function $\varphi$ implies $W^{2/5,3}$ regularity since $3/7>2/5$.)

Let $\Gamma\subset\Omega$ be a smooth curve homeomorphic to a circle.
Choose any $\delta>0$ such that the $\delta$-neighborhood 
$\omega_\delta =\lbrace x\in\R^2\colon \dist(x,\Gamma)\leq \delta\rbrace$ of $\Gamma$ is contained inside $\Omega$ 
and there is a $C^1$-diffeomorphism 
$\Theta\colon \Gamma\times [-\delta,\delta ]\to\omega_\delta$ such that 
$\Theta(\Gamma\times \lbrace 0\rbrace)=\Gamma$.
Then, letting $\Gamma_t=\Theta(\Gamma\times \lbrace t\rbrace)$,
 by Fubini's theorem we have $\varphi\in W^{2/5,3}(\Gamma_t;\R)\subset C^0(\Gamma_t;\R)$ for a.e. $t\in (-\delta,\delta)$.
 Hence there is a well-defined winding number 
 $\deg(u;\Gamma_t)=\deg(v;\Gamma_t)=\deg(v;\Gamma)$.
 This winding number being independent of $t$ and of the choice of the diffeomorphism $\Theta$, 
 this gives a well-defined meaning to $\deg(u;\Gamma)$.
 Moreover, for any sequence $u_\e\to u$ with $E_\e(u_\e)\leq C$,
 thanks to the $W^{2/5,3}_{\loc}$ convergence provided by Corollary~\ref{c:Ws3} 
 and invoking again  Fubini and the embedding $W^{2/5,3}(\Gamma_t)\subset C^0(\Gamma_t)$, 
 for a.e. $t\in (-\delta,\delta)$ we have uniform convergence of $u_\e$ on $\Gamma_t$ along a subsequence,
 hence $\deg(u_\e;\Gamma)=\deg(u_\e;\Gamma_t)\to \deg(u;\Gamma_t)=\deg(u;\Gamma)$.
\end{proof}

\section{Compactness of traces}\label{s:trace}

In this section we prove Theorem~\ref{t:trace}.

\medskip

Let $(u_j)\subset H^1(\Omega;\mathbb S^1)$ 
be such that $\sup_j\|\dv u_j\|_{L^2(\Omega)}<\infty$.
Then 
 from Lemma~\ref{l:kinlim}, we have, along a non-relabeled subsequence,
\begin{align*}
&
u_j\to u\qquad\text{in }L^1_{\loc}(\Omega)\text{ and }a.e.,
\\
&
\dv \Phi(u_j)\rightharpoonup \dv\Phi(u)=\lambda_\Phi(u)\dv u\quad
\text{weakly in }L^q(\Omega)
\text{ for }1 < q<2\,,
\end{align*}
for all  $\Phi\in\mathrm{ENT}$ \eqref{eq:ENT}.
In particular the limit map $u\in H_{\dv}(\Omega;\mathbb S^1)$ satisfies  the chain rule property \eqref{eq:chainENT} and therefore the kinetic formulation \eqref{eq:kin}.
The arguments in \cite{vasseur01} thus ensure
 that $u$ has a trace on $\partial\Omega$, attained in the strong $L^1$ sense.

\begin{proposition}
\label{p:tracelim}
If $u\in B^{1/2}_{3,\infty,\loc}(\Omega;\mathbb S^1)$ satisfies
$\dv u\in L^2(\Omega)$ and $\dv\Phi(u)=\lambda_\Phi(u)\dv u$ for all $\Phi\in\mathrm{ENT}$, 
then $u$ admits a strong trace on $\partial\Omega$, that is,
there exists a measurable map $\tr(u)\colon\partial\Omega\to\mathbb S^1$ such that
\begin{align}\label{eq:strong_trace}
\esslim_{\delta\to 0}\int_{\partial\Omega} |u(x-\delta\nu(x))-\tr(u)(x)|\, d\mathcal H^1(x)= 0\,,
\end{align}
where $\nu(x)$ denote the exterior unit normal to $x\in\partial\Omega$,
and $\esslim$ denotes the limit as $\delta\to 0$ 
with $\delta$ outside a negligible subset of $(0,\delta_0)$, 
for a small enough $\delta_0>0$.
\end{proposition}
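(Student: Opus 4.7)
The plan is to adapt Vasseur's strong trace argument \cite{vasseur01}, originally developed for $L^\infty$ entropy solutions of scalar conservation laws, to our kinetic formulation for $\mathbb S^1$-valued vector fields. A first reduction is boundary straightening: since $\partial\Omega$ is $C^2$, via a finite partition of unity and $C^2$ bi-Lipschitz charts it suffices to prove the claim in the model case where $\Omega$ is a half-strip $(-\delta_0,0)\times B$ for some bounded interval $B\subset\R$, with $\partial\Omega\cap\overline{\Omega}=\{0\}\times B$ and $\nu=\mathbf e_1$. All of our hypotheses—$\dv u\in L^2$, the chain rule $\dv\Phi(u)=\lambda_\Phi(u)\dv u$ for $\Phi\in\mathrm{ENT}$, and the interior $B^{1/2}_{3,\infty,\loc}$ regularity—transfer to this model case, up to bounded geometric factors that do not affect any subsequent limit argument.

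In the model case I would first extract a \emph{weak} trace in the Young-measure sense. For every $\Phi\in\mathrm{ENT}$ and every $\psi\in C^1_c(B)$, the map
\[
x_1\longmapsto \int_B \Phi(u(x_1,x'))\,\psi(x')\,dx'
\]
has bounded variation on $(-\delta_0,0)$, because its distributional derivative in $x_1$ is controlled by $\dv\Phi(u)=\lambda_\Phi(u)\dv u\in L^2$ (the tangential contribution being absorbed after an integration by parts in $x'$). Hence it possesses a one-sided limit as $x_1\to 0^-$. Applied to a countable dense family of entropies and test functions, this yields, for a.e.\ $x'\in B$, a probability measure $\nu_{x'}\in\mathcal P(\mathbb S^1)$ such that $\Phi(u(-\delta_j,\cdot))\rightharpoonup \langle\nu_{(\cdot)},\Phi\rangle$ weakly in $L^1(B)$ for every $\Phi\in\mathrm{ENT}$ along a suitable sequence $\delta_j\to 0^+$.

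The heart of the argument is then to show that $\nu_{x'}$ is a Dirac mass at a.e.\ $x'$, which automatically upgrades weak Young-measure convergence to strong $L^1$ convergence and identifies $\tr(u)(x')$ as its atom. Following Vasseur, at each Lebesgue point $x_0'$ one performs a blow-up $v_\delta(y_1,y'):=u(-\delta(1-y_1),\,x_0'+\delta y')$ on $(0,1)\times\R$. Since the maps are $\mathbb S^1$-valued, weak-$\ast$ compactness in $L^\infty$ yields (modulo a subsequence) a limit $v$; crucially, by scaling, $\|\dv v_\delta\|_{L^2(\text{bounded})}^2=\delta^2\int_{|x-x_0|\lesssim\delta}|\dv u|^2\,dx\to 0$ at any Lebesgue point, so the kinetic formulation \eqref{eq:kin} passes to the limit with zero source: $e^{is}\cdot\nabla_y\chi_v=0$ for $\chi_v(s,y):=\mathbf 1_{v(y)\cdot e^{is}>0}$. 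The translation invariance of the construction in $y'$ forces $v$, and hence its Young-measure trace at $y_1=1^-$, to be independent of $y'$, while the entropy chain rule links this trace to $\nu_{x_0'}$.

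The main obstacle—and the step most faithful to Vasseur's original scheme—is the boundary rigidity: showing that a map $v\colon(0,1)\times\R\to\mathbb S^1$ satisfying $\dv\Phi(v)=0$ for every $\Phi\in\mathrm{ENT}$ and invariant under translations in the tangential variable necessarily has a Dirac boundary Young measure at $y_1=1^-$. I would establish this by testing against the singular entropies $\Phi^{\{s\}}$ underlying the kinetic formulation and exploiting the compensation identity of Lemma~\ref{l:comp_id}: with $\Theta\equiv 0$, that identity forces the bilinear quantity $\int\Delta^{\varphi_0,\chi_v}$ to vanish, which together with the coercivity \eqref{eq:Delta_varphi0_coerc} implies that $v$ is independent of $y'$ and so $\nu_{x_0'}=\delta_{v(1^-)}$. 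Passing back to the original variables gives the strong $L^1$ trace \eqref{eq:strong_trace}, with $\tr(u)(x'):=v(1^-)$ at each Lebesgue point $x_0'=x'$.
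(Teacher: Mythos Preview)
Your overall architecture—straighten the boundary, extract a weak (Young-measure) trace from the normal entropy fluxes, blow up at a generic boundary point, show the blow-up limit has vanishing kinetic source, and deduce the trace Young measure is Dirac—is exactly Vasseur's scheme and matches the paper's proof. The weak-trace step via BV-in-$x_1$ of entropy fluxes is fine.

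There is, however, a real gap in your blow-up step. You extract only a weak-$*$ limit $v$ of $v_\delta$ in $L^\infty$ and then write the kinetic equation for $\chi_v=\mathbf 1_{v\cdot e^{is}>0}$. But weak-$*$ convergence of $v_\delta$ does not imply convergence of the indicator functions $\mathbf 1_{v_\delta\cdot e^{is}>0}$, nor that $v$ is $\mathbb S^1$-valued; without this, the kinetic equation $e^{is}\cdot\nabla_y\chi_v=0$ is not justified, and your subsequent rigidity argument has no object to act on. The paper closes this gap by invoking \emph{velocity averaging} (as in \cite{PS98}, following \cite[Proposition~3]{vasseur01}): since the rescaled source $\widehat\Theta_\e\to 0$ in $L^1$, averaging the kinetic equation yields strong $L^1_{\loc}$ compactness of $\hat u_\e$, hence an $\mathbb S^1$-valued limit and strong convergence $\hat\chi_\e\to\hat\chi_*$. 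Only then does the free-transport equation $e^{is}\cdot\nabla\hat\chi_*=0$ hold. (Incidentally, your scaling computation is off by a factor: $\|\dv v_\delta\|_{L^2}^2=\int_{|x-x_0|\lesssim\delta}|\dv u|^2\,dx$, without the extra $\delta^2$; the conclusion that it vanishes is still correct.)

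Your final rigidity step is also not the paper's, and is under-justified. The paper does not show the blow-up limit is independent of $y'$; instead it uses that free transport forces $\hat\chi_*(s,\cdot)$ to be constant along $D\psi(x_*)^{-1}e^{is}$, a direction transverse to the boundary for a.e.\ $s$, which immediately gives a strong $\{0,1\}$-valued trace for $\hat\chi_*$. This trace is then linked to the blow-up of the weak trace $\chi_0$ via a BV-type argument on the normal flux, forcing $\chi_0\in\{0,1\}$. Your proposed route—use Lemma~\ref{l:comp_id} with $\Theta\equiv 0$ to get $\int\Delta^{\varphi_0,\chi_v}=0$—ignores the boundary contribution from $\dv A^\tau$ on the half-strip $(0,1)\times\R$, so it does not directly yield $D^h v\equiv 0$. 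And the ``translation invariance'' you invoke is a statement about blow-up \emph{Young measures} at Lebesgue points, not about a pointwise map $v$; it only becomes useful once strong convergence is in hand.
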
 

The proof of Proposition~\ref{p:tracelim} is a direct adaptation of the arguments in \cite{vasseur01}. 
For the readers' convenience, we sketch this adaptation in Appendix~\ref{a:traces}.
Next we establish that traces of bounded energy sequences converge strongly to the trace of their limit.

\begin{lemma}\label{l:limtrace}
If $(u_j)$ and $u$ are as in Lemma~\ref{l:kinlim}, 
then $\tr(u_j)\to\tr(u)$ in $L^1(\partial\Omega)$.
\end{lemma}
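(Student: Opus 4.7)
The plan is to combine two ingredients: \emph{uniform} strong trace estimates for the whole sequence, and the interior $L^1_{\loc}$-convergence provided by Lemma~\ref{l:kinlim}. Each $u_j \in H^1(\Omega;\mathbb S^1)$ has a standard Sobolev trace, which necessarily coincides with the strong $L^1$-trace $\tr(u_j)$ produced by Proposition~\ref{p:tracelim} (since $u_j$ itself satisfies the hypotheses of that proposition). Likewise, the limit $u$ from Lemma~\ref{l:kinlim} satisfies those hypotheses and has a strong trace $\tr(u)$. The goal is then to pass between interior and boundary by sliding normally inward, while controlling the sliding cost uniformly in $j$.

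First I would revisit the proof of Proposition~\ref{p:tracelim} sketched in Appendix~\ref{a:traces} and extract a uniform version: all quantitative ingredients in Vasseur's entropy argument enter only through the $L^2$-size of the entropy productions $\dv\Phi(u_j) = \lambda_\Phi(u_j)\,\dv u_j$, which is dominated by $\|\lambda_\Phi\|_\infty \|\dv u_j\|_{L^2(\Omega)}$. Since $\sup_j \|\dv u_j\|_{L^2} < \infty$, there exist a small $\delta_0 > 0$, a subset $S \subset (0,\delta_0)$ of full measure, and a modulus $\omega$ with $\omega(\delta)\to 0$ as $\delta\to 0$, \emph{independent of $j$}, such that for every $\delta \in S$,
\begin{equation*}
\int_{\partial\Omega} \bigl|u_j(x-\delta\nu(x)) - \tr(u_j)(x)\bigr|\, d\mathcal H^1(x) \le \omega(\delta),
\end{equation*}
and the analogous estimate holds for $u$ in place of $u_j$.

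Second, I would apply the triangle inequality: for every $\delta$ in the common full-measure set,
\begin{equation*}
\|\tr(u_j) - \tr(u)\|_{L^1(\partial\Omega)} \le 2\omega(\delta) + \bigl\|u_j(\cdot - \delta\nu) - u(\cdot - \delta\nu)\bigr\|_{L^1(\partial\Omega)}.
\end{equation*}
Since $L^1_{\loc}$-convergence does not a priori control the maps on a codimension-one surface, I would average in the normal parameter. Using the $C^2$-regularity of $\partial\Omega$ to identify a tubular neighborhood via the diffeomorphism $(x,\delta) \mapsto x - \delta\nu(x)$, for any $r \in (0,\delta_0/2)$,
\begin{equation*}
\int_{r}^{2r} \bigl\|u_j(\cdot - \delta\nu) - u(\cdot - \delta\nu)\bigr\|_{L^1(\partial\Omega)}\, d\delta \;\lesssim\; \|u_j - u\|_{L^1(V_{[r,2r]})},
\end{equation*}
where $V_{[r,2r]} = \{y\in\Omega : r \le \dist(y,\partial\Omega) \le 2r\} \subset\subset \Omega$. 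Averaging the previous triangle inequality over $\delta \in [r,2r]$ yields
\begin{equation*}
\|\tr(u_j) - \tr(u)\|_{L^1(\partial\Omega)} \;\lesssim\; \sup_{[r,2r]} \omega \;+\; \frac{1}{r}\,\|u_j - u\|_{L^1(V_{[r,2r]})}.
\end{equation*}
Sending $j \to \infty$ first (for fixed $r$, using $V_{[r,2r]}\subset\subset \Omega$ and the $L^1_{\loc}$-convergence from Lemma~\ref{l:kinlim}) and then $r \to 0$ gives the claim; the extension from the extracted subsequence to the full sequence is a standard subsequence-of-subsequence argument using the uniqueness of the limit $\tr(u)$.

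The main obstacle is verifying the \emph{uniform} modulus $\omega$ in the first step; everything else is routine once that is in hand. Establishing it is essentially bookkeeping inside the appendix argument, noting that Vasseur's estimates are linear in the entropy defect and therefore bounded by a universal function of $\sup_j \|\dv u_j\|_{L^2}$; this allows the same $\omega$ to be used for all $u_j$ and for $u$ simultaneously.
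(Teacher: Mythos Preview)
Your strategy hinges entirely on the first step, and that step does not follow from the appendix. Vasseur's proof of Proposition~\ref{p:tracelim} is a \emph{soft} argument: one first obtains a weak-$*$ trace $\chi_0$ via Gauss--Green, then shows $\chi_0\in\{0,1\}$ by a blow-up at each boundary point, using velocity averaging for compactness of the rescaled sequence and a Lebesgue-point argument to identify the limit. None of these steps produces a quantitative modulus $\omega(\delta)$; the strong convergence $\chi_\delta\to\chi_0$ is deduced only a posteriori from the fact that a weakly convergent sequence of indicator functions with indicator limit converges strongly. Your assertion that ``Vasseur's estimates are linear in the entropy defect and therefore bounded by a universal function of $\sup_j\|\dv u_j\|_{L^2}$'' mischaracterizes the mechanism: there is no inequality of the form $\|u(\cdot-\delta\nu)-\tr(u)\|_{L^1(\partial\Omega)}\le C(\|\dv u\|_{L^2})\,\omega(\delta)$ anywhere in the argument, and it is not clear such an inequality holds (the paper itself flags, just before Theorem~\ref{t:trace}, that a global $B^{1/2}_{3,\infty}$ estimate up to the boundary is open for general data). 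Without the uniform modulus your averaging-in-$\delta$ trick cannot close.

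The paper circumvents this obstruction by a different route: instead of controlling the normal approach uniformly in $j$, it passes to a Young measure $\mu_x$ generated by the bounded sequence $(\tr(u_j))\subset L^\infty(\partial\Omega)$ and uses the Gauss--Green identity for each entropy $\Phi\in\mathrm{ENT}$, together with the interior convergences $\Phi(u_j)\to\Phi(u)$ in $L^1$ and $\dv\Phi(u_j)\rightharpoonup\dv\Phi(u)$ in $L^{3/2}$, to show that $\langle\nu(x),\int\Phi\,d\mu_x-\Phi(\tr(u)(x))\rangle=0$ for all $\Phi$. Choosing the entropies $\Phi_g$ from \eqref{eq:Phi_g} then forces $\mu_x=\delta_{\tr(u)(x)}$ a.e., which yields strong $L^1$ convergence of traces without ever needing a $j$-uniform rate for the normal limit.
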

 
\begin{proof}[Proof of Lemma~\ref{l:limtrace}]
We consider any Young measure 
$\lbrace\mu_x\rbrace\in \mathcal P(\R^2)^{\partial\Omega}$ generated by the bounded sequence $\lbrace\tr(u_j)\rbrace\subset L^\infty(\partial\Omega)$
and prove that
 $\mu_x$ is, for $\mathcal H^1$-almost every $x\in\partial\Omega$, a Dirac mass at $\tr(u)(x)$, which implies the strong convergence $\tr(u_j)\to\tr(u)$ in $L^2(\partial\Omega)$, see e.g.
 \cite[\S~1.5.3]{evans90}.
Here, recall that $\mu_x$ is characterized
by the convergence,
along a (nonrelabeled) subsequence $j\to\infty$,
\begin{align*}
\int_{\partial\Omega} \zeta\, F(\tr(u_j))\, d\mathcal H^1 \to \int_{\partial\Omega}\zeta(x)\, \int_{\R^2}F(y)\, d\mu_x(y)\, d\mathcal H^1(x)\,,
\end{align*}
for all 
$F\in C^0(\R^2)$ and  $\zeta\in L^1(\partial\Omega)$.
Thanks to Proposition~\ref{p:tracelim}, $u$ has a strong trace $\tr(u)$ on $\partial\Omega$,
see \eqref{eq:strong_trace}, and it satisfies the Gauss-Green identity
\begin{align*}
\int_\Omega \eta \, \dv \Phi(u) \, dx
+\int_\Omega \langle \nabla\eta,\Phi(u) \rangle\, dx 
=\int_{\partial\Omega} \eta\, \langle \nu,\Phi(\tr(u))\rangle\, d\mathcal H^1\,,
\end{align*} 
for all $\eta\in C_c^1(\R^2)$, see  \cite{CF99}.
We can also apply this formula to $u_j\in H^1(\Omega;\mathbb S^1)$.
Using 
that $\dv\Phi(u_j)\rightharpoonup \dv\Phi(u)$ weakly in $L^{3/2}(\Omega)$ and $\Phi(u_j)\to\Phi(u)$ strongly in $L^1(\Omega)$, we deduce
\begin{align*}
\int_{\partial\Omega}\eta \langle \nu,\Phi(\tr(u_j))\rangle\, d\mathcal H^1
\to
\int_{\partial\Omega}\eta \langle \nu,\Phi(\tr(u))\rangle\, d\mathcal H^1\,,
\end{align*}
for all $\eta\in C_c^1(\R^2)$.
By definition of the Young measure $\mu_x$, this implies
\begin{align*}
\int_{\partial\Omega}\eta(x)
 \big\langle \nu(x),
 \int_{\R^2} 
\widetilde \Phi(y) 
d\mu_x(y)
-\Phi(\tr(u)(x))
\big\rangle\,
d\mathcal H^1(x)
=0,
\end{align*}
for all $\eta\in C_c^1(\R^2)$ and $\Phi\in\mathrm{ENT}$,
and any extension $\widetilde\Phi\in C_c^1(\R^2;\R^2)$ such that 
$\widetilde\Phi_{|\mathbb S^1}=\Phi$.
This being valid for all $\eta$, we infer
\begin{align}\label{eq:muxENT}
 \big\langle \nu(x),
 \int_{\R^2} 
\widetilde\Phi(y) 
d\mu_x(y)
-\Phi(\tr(u)(x))
\big\rangle\,
=0\,,
\qquad\text{for a.e. }x\in\partial\Omega\,.
\end{align}
We fix $x\in\partial\Omega$ such that this identity is satisfied.
First note that we may add to the extension 
$\widetilde \Phi$ any map 
$\Psi\in C_c^1(\R^2)$ such that $\Psi_{|\mathbb S^1}=0$. 
This implies that
\begin{align*}
 \big\langle \nu(x),
 \int_{\R^2} 
\Psi(y) 
d\mu_x(y)
\big\rangle\,
=0\,,
\qquad\forall \Psi\in C_c^1(\R^2)\text{ with }\Psi_{|\mathbb S^1}=0\,.
\end{align*}
As a consequence,
 $\mu_x$ is supported in $\mathbb S^1$.
Then, applying \eqref{eq:muxENT}
 to the entropies $\Phi_g$ given in \eqref{eq:Phi_g}, we obtain
\begin{align*}
\int_{\T} g(s)\, \langle e^{is}, \nu(x) \rangle
\bigg( \int_{\mathbb S^1} 
\mathbf 1_{ e^{is}\cdot y >0}
\,
d\mu_x(y) 
-\mathbf 1_{e^{is}\cdot \tr(u)(x) >0}
\bigg)\, ds
=0\,,
\end{align*}
for all $g\in C^0(\T)$, and therefore, 
letting $\xi=\tr(u)(x)\in\mathbb S^1$,
\begin{align*}
 \int_{\mathbb S^1} 
\mathbf 1_{e^{is}\cdot y >0}
\,
d\mu_x(y)
=
\mathbf 1_{e^{is}\cdot \xi > 0}
\qquad\text{for a.e. }s\in\T\,,
\end{align*}
so the semi-circle 
$C_s^+ =
\lbrace y\in\mathbb S^1\colon e^{is}\cdot y >0\rbrace$
satisfies
\begin{align*}
\mu_x(C_s^+)
=0
\qquad\text{for a.e. }s\in\T_\xi^- =\lbrace s\in\T\colon e^{is}\cdot\xi <0\rbrace\,.
\end{align*}
Applying this to a dense sequence 
$\lbrace s_k\rbrace\subset\T_\xi^-$  we deduce that
\begin{align*}
\mu_x(\mathbb S^1\setminus\lbrace\xi\rbrace)
=\mu_x\Big(
\bigcup_{k} C_{s_k}^+
\Big)
=0\,.
\end{align*}
We conclude that the Young measure $\mu_x$ is a Dirac mass at 
$\xi=\tr(u)(x)$ for a.e. $x\in\partial\Omega$, 
and therefore $\tr(u_j)\to\tr(u)$ strongly in $L^1(\partial\Omega)$.
\end{proof}

\begin{proof}[Proof of Theorem~\ref{t:trace}]
Theorem~\ref{t:trace} follows directly from Lemma~\ref{l:kinlim}, Proposition~\ref{p:tracelim} and Lemma~\ref{l:limtrace}.
\end{proof}

\section{Recovery sequences}\label{s:recov}

In this section we prove Theorem~\ref{t:recovery2D}.
It relies on the properties of the convolution
\begin{align}\label{eq:udelta}
u_\delta =u *\rho_\delta\qquad\text{in }\Omega_\delta=\lbrace x\in\Omega\colon\dist(x,\partial\Omega)>\delta\rbrace\,,
\end{align}
where $\rho_\delta(x)=\delta^{-2}\rho(x/\delta)$ and $\rho\in C_c^1(B_1;[0,1])$ satisfies $|\nabla\rho|\leq 2$ and $\int_{B_1}\rho\, dx =1$.
We will obtain the recovery sequence by projecting $u_\delta$ onto $\mathbb S^1$, namely setting $v_\delta=u_\delta/|u_\delta|$,
and then modifying this map near the boundary in order
to extend it to
 the whole $\Omega$ instead of the smaller domain $\Omega_\delta$.
The main step
is to establish
convergence of $v_\delta$ and $\dv v_\delta$
in this smaller domain.
This is done in the following lemma,
 by relying 
on commutator estimates which provide
an improved control on the oscillations of
$|u_\delta|^2$
thanks to the 
commutator structure $1-|u_\delta|^2=|u|^2_\delta -|u_\delta|^2$.

\begin{proposition}\label{p:suffB1/2_4c0}
Assume $u\colon  \Omega\to\mathbb S^1$ is
such that $\dv u\in L^2(\Omega)$,  and  it satisfies \eqref{eq:suffcondrecov}, that is,
\begin{align*}
 \Xint{-}_{B_\delta} 
 \bigg(
 \frac{\|D^h u\|_{L^4(\Omega\cap(\Omega-h))}}
 {\delta^{1/2}}
 \bigg)^4 \, dh 
 \longrightarrow 0
\qquad\text{ as }\delta\to 0\,.
\end{align*}
Then, for small enough $\delta>0$, the convolution $u_\delta$ defined in \eqref{eq:udelta} does not vanish in $\Omega_\delta$, 
and the map $v_\delta=u_\delta/|u_\delta|\in C^1(\Omega_\delta;\mathbb S^1)$ satisfies
 \begin{align*}
 \int_{\Omega_\delta} |v_\delta - u|^2\, dx \to 0
 \quad\text{and}
 \quad
 \int_{\Omega_{\delta}} (\dv v_\delta - \dv u)^2\, dx \to 0\,,
 \end{align*}
 as $\delta\to 0$.
\end{proposition}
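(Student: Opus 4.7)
The plan is to exploit the commutator identity
\begin{align*}
R_\delta(x) := 1 - |u_\delta(x)|^2 = (|u|^2)_\delta(x) - |u_\delta(x)|^2 = \tfrac{1}{2}\iint \rho_\delta(x-y)\rho_\delta(x-z)|u(y)-u(z)|^2\, dy\, dz,
\end{align*}
which is nonnegative precisely because $|u|\equiv 1$ a.e. First I would establish the uniform decay $\|R_\delta\|_{L^\infty(\Omega_\delta)}\to 0$: bounding one kernel by $\rho_\delta\leq C\delta^{-2}$, changing variables $h=y-z$, and applying Hölder's inequality on $B_\delta(x)$ gives the pointwise estimate $R_\delta(x)\lesssim \delta^{-3}\int_{B_{2\delta}}\|D^h u\|_{L^4}^2\,dh$, and Cauchy--Schwarz in $h$ combined with the hypothesis \eqref{eq:suffcondrecov} (which translates to $\int_{B_\delta}\|D^h u\|_{L^4}^4\,dh = o(\delta^4)$) yields $R_\delta(x) = o(1)$ uniformly. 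Consequently $|u_\delta|\geq 1/\sqrt 2$ on $\Omega_\delta$ for $\delta$ small, and $v_\delta = u_\delta/|u_\delta|$ is well-defined and $C^1$. The $L^2$ convergence $v_\delta\to u$ then follows at once from $|v_\delta-u_\delta| = 1-|u_\delta|\leq R_\delta$ and the standard mollification result $u_\delta\to u$ in $L^2_\loc$.

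The core of the proof is the convergence $\dv v_\delta\to \dv u$ in $L^2$. A direct computation yields
\begin{align*}
\dv v_\delta = \frac{(\dv u)_\delta}{|u_\delta|} - \frac{(u_\delta\otimes u_\delta):\nabla u_\delta}{|u_\delta|^3}.
\end{align*}
The first summand converges to $\dv u$ in $L^2$ since $(\dv u)_\delta\to\dv u$ in $L^2$ and $|u_\delta|\to 1$ uniformly. The main obstacle is to show that the numerator
\begin{align*}
M := (u_\delta\otimes u_\delta):\nabla u_\delta = \tfrac{1}{2}u_\delta\cdot\nabla|u_\delta|^2 = -\tfrac{1}{2}u_\delta\cdot\nabla R_\delta
\end{align*}
vanishes in $L^2(\Omega_\delta)$. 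This is a cubic commutator in the spirit of Constantin--E--Titi: since $\|\nabla R_\delta\|_{L^2}$ may be of order $\delta^{-1}$ under the hypothesis, the naive bound $|M|\leq|u_\delta||\nabla R_\delta|$ is too crude, and the cancellation encoded in the contraction $u_\delta\cdot\nabla R_\delta$ must be genuinely exploited.

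To extract this cancellation I would use the representation (obtained from $\int\nabla\rho_\delta(x-y)\,dy = 0$)
\begin{align*}
\nabla R_\delta(x) = \int\nabla\rho_\delta(x-y)\,|u(y)-u_\delta(x)|^2\,dy,
\end{align*}
which, together with $|u_\delta|\leq 1$ and $|\nabla\rho_\delta|\lesssim\delta^{-3}$, gives $|M(x)|\lesssim\delta^{-3}\int_{B_\delta(x)}|u(y)-u_\delta(x)|^2\,dy$. Bounding $|u(y)-u_\delta(x)|^2\leq \int\rho_\delta(x-z)|u(y)-u(z)|^2\,dz$ by Jensen, changing variables $h=y-z$, and recognizing the remaining integral as a convolution, I arrive at the key pointwise estimate
\begin{align*}
|M(x)|\lesssim \delta^{-3}\int_{B_{2\delta}}\bigl(\rho_\delta\ast|D^h u|^2\bigr)(x)\,dh.
\end{align*}
Squaring, applying Cauchy--Schwarz in $h$ (contributing a factor $|B_{2\delta}|\simeq\delta^2$), integrating over $x$, and invoking Young's inequality $\|\rho_\delta\ast g\|_{L^2}\leq\|g\|_{L^2}$ finally produces
\begin{align*}
\|M\|_{L^2(\Omega_\delta)}^2\lesssim \delta^{-4}\int_{B_{2\delta}}\|D^h u\|_{L^4}^4\,dh \longrightarrow 0,
\end{align*}
by \eqref{eq:suffcondrecov}. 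This is precisely where the $L^4$-integrability of the hypothesis matches the cubic structure of $M$, and completes the proof.
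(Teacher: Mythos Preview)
Your proof is correct and follows essentially the same route as the paper: commutator identity for $R_\delta=1-|u_\delta|^2$, uniform decay of $R_\delta$, chain rule for $\dv v_\delta$, and a commutator estimate on the second term. However, you misdiagnose where the cancellation lies. You assert that the naive bound $|M|\leq|u_\delta|\,|\nabla R_\delta|$ is too crude and that the contraction $u_\delta\cdot\nabla R_\delta$ must be exploited, yet your ``refined'' argument starts from the representation $\nabla R_\delta(x)=\int\nabla\rho_\delta(x-y)\,|u(y)-u_\delta(x)|^2\,dy$ and then immediately bounds $|M|\leq|\nabla R_\delta|$ via $|u_\delta|\leq 1$; no contraction-cancellation is ever used. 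The paper makes this explicit: in Lemma~\ref{l:udelta} it proves directly that $\|\nabla[|u_\delta|^2]\|_{L^2(\Omega_\delta)}\to 0$ using exactly that representation together with Jensen's inequality, and then simply estimates $|M|\leq\tfrac12|\nabla[|u_\delta|^2]|$. The genuine cancellation is the commutator structure $R_\delta=(|u|^2)_\delta-|u_\delta|^2$ itself (encoded in $\int\nabla\rho_\delta=0$), which already forces $\|\nabla R_\delta\|_{L^2}=o(1)$ under \eqref{eq:suffcondrecov}, not $O(\delta^{-1})$ as you claim.
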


Proposition~\ref{p:suffB1/2_4c0} relies on the chain rule
 and on the properties of $|u_\delta|$, namely:

\begin{lemma}\label{l:udelta}
Assume $u\colon  \Omega\to\mathbb S^1$ satisfies \eqref{eq:suffcondrecov}, then the convolution $u_\delta$ defined in \eqref{eq:udelta} satisfies
\begin{align*}
\sup_{\Omega_{\delta}} \big|1-|u_\delta|^2\big|
\to 0
\quad\text{and}
\quad
\int_{\Omega_\delta}|\nabla [|u_\delta|^2]|^2\, dx  \to 0\,,
\end{align*}
as $\delta\to 0$.
\end{lemma}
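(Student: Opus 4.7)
The plan is to exploit the commutator structure $1-|u_\delta|^2 = (|u|^2)*\rho_\delta - |u_\delta*1|^2$ (valid since $|u|\equiv 1$ and $\int\rho_\delta = 1$), and a similar commutator identity for the gradient. Applying the polarization identity $2a\cdot b = |a|^2+|b|^2-|a-b|^2$ to $a=u(x-y)$, $b=u(x-z)$ and integrating against $\rho_\delta(y)\rho_\delta(z)\,dy\,dz$ yields
\begin{equation*}
1-|u_\delta(x)|^2 = \tfrac{1}{2}\int\!\!\int |u(x-y)-u(x-z)|^2\,\rho_\delta(y)\rho_\delta(z)\,dy\,dz.
\end{equation*}
Differentiating $|u_\delta|^2$ and using $\int\partial_i\rho_\delta = 0$ to create the commutator structure, one analogously obtains
\begin{equation*}
\partial_i|u_\delta|^2(x) = -\int\!\!\int |u(x-y)-u(x-z)|^2\,\rho_\delta(y)\partial_i\rho_\delta(z)\,dy\,dz.
\end{equation*}

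Next, I would change variables via $h=z-y$ in both identities to put them in convolution form with respect to $x$, obtaining expressions of the shape $\int_{h\in B_{2\delta}}\bigl(|D^h u|^2 \ast K_h\bigr)(x)\,dh$, where the kernel $K_h$ is built from $\rho_\delta$ and its translate (first case) or $\rho_\delta$ and $\partial_i\rho_\delta$ (second case). The key is to choose Young's inequality so that $|D^h u|^2$ always appears in $L^2$, so that its norm is $\|D^h u\|_{L^4}^2$, matching the hypothesis \eqref{eq:suffcondrecov}.

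For the $L^\infty$ bound I would use $\|f\ast g\|_\infty \leq \|f\|_{L^2}\|g\|_{L^2}$, with $\|K_h\|_{L^2} \lesssim \|\rho_\delta\|_\infty\|\rho_\delta\|_{L^2}\lesssim \delta^{-3}$. For the $L^2$ bound on $\nabla|u_\delta|^2$ I would use $\|f\ast g\|_{L^2}\leq \|f\|_{L^2}\|g\|_{L^1}$, with $\|K_h\|_{L^1}\lesssim \|\rho_\delta\|_\infty\|\partial_i\rho_\delta\|_{L^1}\lesssim \delta^{-3}$. In both cases, Minkowski's integral inequality in $h$ followed by Cauchy--Schwarz and the substitution $|B_{2\delta}|\sim\delta^2$ yields
\begin{equation*}
\sup_{\Omega_\delta}\bigl(1-|u_\delta|^2\bigr) + \|\nabla|u_\delta|^2\|_{L^2(\Omega_\delta)} \lesssim \left(\delta^{-4}\int_{B_{2\delta}}\|D^h u\|_{L^4(\Omega\cap(\Omega-h))}^4\,dh\right)^{\!1/2},
\end{equation*}
which tends to $0$ by \eqref{eq:suffcondrecov} (applied with $\delta$ replaced by $2\delta$, using that $|B_{2\delta}|\sim\delta^2$).

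The main technical obstacle is the careful bookkeeping of the convolution form after the change of variables $h=z-y$: one must verify that the expression genuinely splits as $\int_h (|D^h u|^2\ast K_h)\,dh$ with the derivative $\partial_i$ landing on the $\rho_\delta$-factor rather than on $|D^h u|^2$ (otherwise one would lose the commutator gain and be left with an uncontrolled $\|u\ast\partial_i\rho_\delta\|$). Once this is set up, the rest is a matter of selecting the right Young exponents so that $|D^h u|^2$ is measured in $L^2$ and the worst factor of $\delta^{-3}$ from the kernels is exactly absorbed by the factor $\delta$ from $|B_{2\delta}|^{1/2}$ after Cauchy--Schwarz, leaving precisely the quantity controlled by the hypothesis \eqref{eq:suffcondrecov}.
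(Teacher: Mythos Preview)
Your proposal is correct and follows essentially the same commutator approach as the paper: both derive the identity $1-|u_\delta|^2 = \tfrac12\iint|u(x-y)-u(x-z)|^2\rho_\delta(y)\rho_\delta(z)\,dy\,dz$ (the paper writes it in the equivalent form \eqref{eq:commut_id} with $u_\delta(x)$ in place of the second average) and differentiate it to treat $\nabla|u_\delta|^2$. The only cosmetic difference is that you organize the final estimate via the change of variables $h=z-y$ together with Young's convolution inequality and Cauchy--Schwarz in $h$, whereas the paper applies Jensen's inequality pointwise; both routes land on the same quantity $\delta^{-4}\int_{B_{2\delta}}\|D^h u\|_{L^4}^4\,dh$ controlled by \eqref{eq:suffcondrecov}.
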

\begin{proof}[Proof of Lemma~\ref{l:udelta}]
One approach to establish the first property 
is to notice that $u$ is VMO thanks to the assumption \eqref{eq:suffcondrecov},
and this is well-known to imply the uniform convergence of $\dist(u_\delta,\mathbb S^1)$ to zero, see e.g. 
\cite[eqn.(7)]{BN95}).
The second property will follow from commutator estimates (as used e.g. in \cite{CET94,DLI15})
 for
$1-|u_\delta|^2=|u|^2 * \rho_\delta - |u * \rho_\delta|^2$.
We present here a self-contained proof of both properties relying on these commutator estimates.

We start from the identity
\begin{align*}
1-|u_\delta(x)|^2
&
=|u(x- y)|^2 -|u_\delta(x)|^2
\\
&
=|u(x- y)-u_\delta(x)|^2 + 2 \langle u_\delta(x),u(x- y)-u_\delta(x)\rangle\,,
\end{align*}
valid for all $x\in\Omega_\delta$ and $y\in B_\delta$. Integrating with respect to $\rho_\delta(y)\,dy$, 
the last term integrates to zero and we are left with
\begin{align}\label{eq:commut_id}
1-|u_\delta(x)|^2
&
=\int_{B_\delta} |u(x- y)-u_\delta(x)|^2\, \rho_\delta(y)\,dy
\nonumber
\\
&
=\int_{\Omega} \bigg| \int_{\Omega} \big(u(y)-u( z)\big)
\rho_\delta(x-z)\,dz
\bigg|^2\rho_\delta(x-y)\,dy
\end{align}
As a first consequence of \eqref{eq:commut_id},
 we deduce, using Jensen's inequality and the fact that $\rho_\delta\leq \delta^{-2}$,
\begin{align*}
\big|1-|u_\delta(x)|^2\big|^2
&
\leq 
\int_{\Omega} \int_{\Omega} \big|
u(y)-u( z)\big|^4
\rho_\delta(x-y)\,dy
\,
\rho_\delta(x-z)\,dz
\\
&
=
\int_{B_\delta(x)}\! 
\int_{B_\delta(x-z)}
\!\!
 \big|
u(z+h)-u( z)\big|^4
\rho_\delta(x-z-h)\,dh
\,
\rho_\delta(x-z)\,dz
\\
&
\leq
\frac{1}{\delta^4}
\int_{B_\delta(x)}
\int_{B_{2\delta}} 
\mathbf 1_{z+h\in\Omega} 
\big|
D^h u(z)\big|^4
\,dh
\,dz
\\
&
\leq \int_{B_{2\delta}}\frac{1}{\delta^4}\|D^h u\|^4_{L^4(\Omega\cap(\Omega-h))}\,dh
\\
&
=\pi \Xint{-}_{B_{2\delta}} \bigg(\frac{\|D^h u\|_{L^4(\Omega\cap(\Omega-h))} }{\delta^{1/2}} 
\bigg)^4\, dh\,.
\end{align*}
This last quantity is independent of $x\in\Omega_\delta$,
 and tends to zero by assumption \eqref{eq:suffcondrecov}, 
hence the first convergence property.

To establish the convergence of $\nabla [|u_\delta|^2]$ in $L^2$, we differentiate \eqref{eq:commut_id} and obtain
\begin{align*}
-\nabla[|u_\delta|^2](x)
&
=\frac{1}{\delta}\int_{\Omega} \bigg| \int_{\Omega} \big(u(y)-u( z)\big)
\rho_\delta(x-z)\,dz
\bigg|^2(\nabla\rho)_\delta(x-y)\,dy
\\
&
\quad
+\frac 2\delta \int_{\Omega}
\Big\langle \int_{\Omega}\big(u(y)-u( z')\big)
\rho_\delta(x-z')\,dz'
,
\\
&
\hspace{5em}
\int_{\Omega}\big(u(y)-u( z)\big)
(\nabla\rho)_\delta(x-z)\,dz
\Big\rangle
\,\rho_\delta(x-y)\, dy\,.
\end{align*}
Recalling that $|\nabla \rho|\leq 2$ and using again Jensen's inequality this implies
\begin{align*}
\big|\nabla[|u_\delta|^2]\big|^2(x)
&
\lesssim \frac{1}{\delta^2}\Xint{-}_{B_{\delta}(x)}\Xint{-}_{B_\delta(x)}|u(y)-u(z)|^4\, dy\, dz
\\
&
= \frac{1}{\delta^2}\Xint{-}_{B_{\delta}}\Xint{-}_{B_\delta}|u(x+h)-u(x+k)|^4\, dh\, dk
\\
&
\lesssim \frac{1}{\delta^2}
\Xint{-}_{B_\delta}|u(x+h)-u(x)|^4\, dh
=\Xint{-}_{B_{\delta}}\frac{|D^h u(x)|^4}{\delta^{2}} 
\, dh
\end{align*}
Integrating this inequality we infer
\begin{align*}
\int_{\Omega_\delta}\big|\nabla [|u_\delta|^2]\big|^2\, dx \lesssim
\Xint{-}_{B_{\delta}} \bigg(\frac{\|D^h u\|_{L^4(\Omega\cap(\Omega-h))} }{\delta^{1/2}} 
\bigg)^4\, dh\,,
\end{align*}
and this tends to zero by assumption \eqref{eq:suffcondrecov}.
\end{proof}

\begin{proof}[Proof of Proposition~\ref{p:suffB1/2_4c0}]
Thanks to the first convergence property in Lemma~\ref{l:udelta}, 
for small enough $\delta>0$ we have $|u_\delta|\geq 1/2$
in $\Omega_\delta$.
Hence the map $v_\delta=u_\delta/|u_\delta|$ is well defined and $C^1$ in $\Omega_\delta$.
The map $\xi\mapsto \xi/|\xi|$ is Lipschitz on $\lbrace |\xi|\geq 1/2\rbrace$, so we have
\begin{align*}
\int_{\Omega_\delta}|v_\delta - u|^2\, dx  \lesssim \int_{\Omega_\delta}|u_\delta - u|^2\,dx\,,
\end{align*}
and this last integral tends to zero
thanks to classical properties of  convolutions.
Moreover, using the chain rule we compute
\begin{align*}
\dv  v_\delta 
&
=\frac{\dv u_\delta}{|u_\delta|}-\frac{1}{2|u_\delta|^3}
\langle u_\delta ,  \nabla [ |u_\delta|^2] \rangle\,.
\end{align*}
Since $\dv u_\delta =(\dv u)_\delta$,
this implies
\begin{align*}
\dv v_\delta -\dv u
&
=(\dv u)_\delta - \dv u 
+(\dv u)_\delta \frac{|u_\delta|-1}{|u_\delta|}
-\frac{1}{2|u_\delta|^3}
\langle u_\delta ,  \nabla [ |u_\delta|^2] \rangle\,,
\end{align*}
and therefore
\begin{align*}
\|\dv v_\delta -\dv u\|_{L^2(\Omega_\delta)}
&
\leq 
\| (\dv u)_\delta -\dv u\|_{L^2(\Omega_\delta)}
\\
&\quad
+2\|\dv u \|_{L^2(\Omega)} \sup_{\Omega_\delta}|1-|u_\delta||
\\
&
\quad
 +2 \|\nabla [|u_\delta|^2]\|_{L^2(\Omega_\delta)}\,.
\end{align*}
The first term in the right-hand side tends to 0 by classical properties of convolutions, and
the last two terms tend to zero thanks to Lemma~\ref{l:udelta}.
\end{proof}

In order to prove Theorem~\ref{t:recovery2D} from Proposition~\ref{p:suffB1/2_4c0}, 
we will need to modify the map $v_\delta$ defined on the slightly smaller domain $\Omega_\delta$,
into a map $\tilde v_\delta$ defined on the full domain $\Omega$.
This will require some control on $v_\delta$ in $H^1$, which we establish before showing the proof of Theorem~\ref{t:recovery2D}.

\begin{lemma}\label{l:vdeltaH1}
Let $\Omega\subset\R^2$ a bounded open set and
assume $u\colon  \Omega\to\mathbb S^1$ satisfies \eqref{eq:suffcondrecov}.
Then the map $v_\delta\in C^1(\Omega_\delta;\mathbb S^1)$, defined in Proposition~\ref{p:suffB1/2_4c0} for small $\delta >0$,
satisfies
\begin{align*}
\delta\int_{\Omega_\delta}|\nabla v_\delta|^2\, dx \to 0\qquad\text{as }\delta\to 0\,,
\end{align*}
where $\Omega_\delta$ is defined in \eqref{eq:udelta}.
\end{lemma}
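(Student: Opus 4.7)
\medskip

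\noindent\textbf{Proof plan for Lemma~\ref{l:vdeltaH1}.}

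The plan is to reduce the estimate on $\nabla v_\delta$ to one on $\nabla u_\delta$, and then bound $\nabla u_\delta$ using the finite-difference hypothesis \eqref{eq:suffcondrecov}. The first step is pointwise: since $v_\delta = u_\delta/|u_\delta|$ and Lemma~\ref{l:udelta} guarantees $|u_\delta|\geq 1/2$ on $\Omega_\delta$ for small $\delta$, a direct chain-rule computation shows
\begin{align*}
|\partial_i v_\delta|^2 = \frac{|\partial_i u_\delta|^2}{|u_\delta|^2} - \frac{(u_\delta\cdot\partial_i u_\delta)^2}{|u_\delta|^4}\leq \frac{|\partial_i u_\delta|^2}{|u_\delta|^2}\lesssim |\partial_i u_\delta|^2,
\end{align*}
so it suffices to prove $\delta\int_{\Omega_\delta}|\nabla u_\delta|^2\,dx\to 0$.

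The second step exploits $\int \nabla\rho_\delta = 0$ to write
\begin{align*}
\nabla u_\delta(x) = \int_{B_\delta}(u(x-y)-u(x))\,\nabla\rho_\delta(y)\,dy,
\end{align*}
and then I use $\|\nabla\rho_\delta\|_\infty\lesssim \delta^{-3}$ and Jensen's inequality (applied twice, or equivalently Hölder with exponent $4$ against the ball $B_\delta$ of volume $\sim\delta^2$) to obtain the pointwise bound
\begin{align*}
|\nabla u_\delta(x)|^4 \lesssim \delta^{-6}\int_{B_\delta}|D^h u(x)|^4\,dh.
\end{align*}
Integrating over $\Omega_\delta$, swapping the order of integration, and applying the assumption \eqref{eq:suffcondrecov} (which says $\Xint{-}_{B_\delta}\|D^h u\|_{L^4(\Omega\cap(\Omega-h))}^4\,dh = o(\delta^2)$, i.e.\ $\int_{B_\delta}\|D^h u\|_{L^4}^4\,dh = o(\delta^4)$), we get
\begin{align*}
\int_{\Omega_\delta}|\nabla u_\delta|^4\,dx \lesssim \delta^{-6}\cdot o(\delta^4) = o(\delta^{-2}).
\end{align*}

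The final step is Hölder's inequality on the bounded domain $\Omega_\delta$:
\begin{align*}
\int_{\Omega_\delta}|\nabla u_\delta|^2\,dx \leq |\Omega|^{1/2}\Big(\int_{\Omega_\delta}|\nabla u_\delta|^4\,dx\Big)^{1/2}= o(\delta^{-1}),
\end{align*}
which yields $\delta\int_{\Omega_\delta}|\nabla u_\delta|^2\,dx = o(1)$, as required. I do not expect any serious obstacle; the only subtle points are choosing the right exponent (namely $L^4$, which matches the integrability in \eqref{eq:suffcondrecov}) and distributing factors of $\delta$ correctly between $\|\nabla\rho_\delta\|_\infty$, the volume of $B_\delta$, and the $\delta^{1/2}$ normalization in \eqref{eq:suffcondrecov}. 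The gain of one factor of $\delta$ compared to the natural bound $\int|\nabla u_\delta|^2\lesssim \delta^{-1}$ of a generic mollification comes precisely from this $W^{1/2,4}$-type scaling.
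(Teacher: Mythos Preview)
Your proof is correct and follows essentially the same approach as the paper: reduce from $v_\delta$ to $u_\delta$ via the Lipschitzness of $\xi\mapsto\xi/|\xi|$ on $\{|\xi|\geq 1/2\}$, rewrite $\nabla u_\delta$ as an average of finite differences using $\int\nabla\rho_\delta=0$, pass to $L^4$ by Jensen/H\"older, and invoke \eqref{eq:suffcondrecov}. The only cosmetic difference is that the paper squares the quantity $\delta\int_{\Omega_\delta}|\nabla u_\delta|^2\,dx$ first and then applies Cauchy--Schwarz in $x$ and Jensen in the kernel variable, whereas you obtain a pointwise $L^4$ bound and apply H\"older at the end; the bookkeeping of $\delta$-powers is identical.
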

\begin{proof}[Proof of Lemma~\ref{l:vdeltaH1}]
Recall that $v_\delta=u_\delta/|u_\delta|$ and $|u_\delta|\geq 1/2$ for small $\delta >0$.
The map $\xi\mapsto \xi/|\xi|$ is Lipschitz on $\lbrace |\xi|\geq 1/2\rbrace$,
so it suffices to establish the estimate for $u_\delta=u * \rho_\delta$, that is,
\begin{align*}
\delta\int_{\Omega_\delta}|\nabla u_\delta|^2\, dx \to 0\qquad\text{as }\delta\to 0\,,
\end{align*}
For all $x\in\Omega_\delta$ we have
\begin{align*}
\nabla u_\delta(x)
&
=\int_\Omega u(y) \nabla\rho_\delta (x-y)\, dy 
=\frac 1\delta \int_{B_1} u(x-\delta y)\nabla\rho(y)\, dy
\\
&
=\frac 1\delta \int_{B_1} D^{-\delta y}u(x)\, \nabla\rho(y)\, dy\,.
\end{align*}
The last equality is obtained by subtracting $u(x)$ in the integral with respect to the zero-average measure $\nabla\rho(y)\, dy$.
Squaring, integrating, and applying Jensen's inequality, we deduce
\begin{align*}
\bigg(
\delta \int_{\Omega_\delta }
|\nabla u_\delta|^2\, dx
\bigg)^2
&
\lesssim |\Omega_\delta| 
\int_{B_1}\int_{\Omega_\delta }
\frac{|D^{-\delta y} u|^4}{\delta^2}\, dx \, dy
\\
&
\lesssim 
 |\Omega_\delta|
\Xint{-}_{B_\delta} 
\bigg(\frac{\|D^h u\|_{L^4(\Omega\cap(\Omega-h))}}{\delta^{1/2}}\bigg)^4 \, dh\,,
\end{align*}
and this last expression tends to zero by assumption \eqref{eq:suffcondrecov}.
\end{proof}

Finally we use Proposition~\ref{p:suffB1/2_4c0} to prove Theorem~\ref{t:recovery2D}.

\begin{proof}[Proof of Theorem~\ref{t:recovery2D}]
We first need to adjust the maps $v_\delta$ provided by Proposition~\ref{p:suffB1/2_4c0} in the slightly smaller domain $\Omega_\delta$ in order to obain approximating maps in the whole domain $\Omega$.
This is done by performing 
a small dilation in a neighborhood of $\partial\Omega$.

Denote by $\nu\colon \partial\Omega\to\mathbb S^1$ the outer unit normal to $\partial\Omega$, and fix $\delta_0>0$ such that
\begin{align*}
\Xi\colon \partial\Omega\times (0,\delta_0) &
\to \Omega\setminus\overline \Omega_{\delta_0} = \lbrace x\in\Omega\colon \dist(x,\partial\Omega)<\delta_0\rbrace
\\
(x,t) &\mapsto 
x-t\nu(x)\,,
\end{align*}
is a $C^2$ diffeomorphism.
We use it to define, for $0<\delta <\delta_0$, a $C^2$ diffeomorphism
\begin{align*}
\Psi_\delta\colon \Omega\setminus \Omega_{\delta_0}\to\Omega_\delta\setminus \Omega_{\delta_0}\,,
\quad
\Psi_\delta =\Xi\circ\widetilde\Psi_\delta\circ\Xi^{-1}\,,
\end{align*}
where $\widetilde\Psi_\delta =\Xi^{-1}\circ \Psi_\delta \circ \Xi\colon\partial\Omega\times (0,\delta_0)\to \partial\Omega\times (\delta,\delta_0)$
is given by
\begin{align*}
\widetilde\Psi_\delta(x,t)
=\big(x,\delta + (1-\delta/\delta_0) t  \big)
= (x,t) + \frac{\delta}{\delta_0}(0,\delta_0 - t)\,.
\end{align*}
Using that $|\widetilde\Psi_\delta - \I_{\partial\Omega\times\R}|\leq \delta$ and $|D\widetilde\Psi_\delta - \I_{T(\partial\Omega)\times\R} |\leq \delta/\delta_0$ on $\partial\Omega\times (0,\delta_0)$,
we find that
\begin{align*}
|D\Psi_\delta - \I_{\R^2}|\lesssim \delta \bigg( \frac{\|D\Xi\|_\infty}{\delta_0} +   \mathrm{Lip}(D\Xi)
\bigg) 
\|D\Xi^{-1}\|_\infty  
\quad
\text{on }\Omega\setminus\Omega_{\delta_0}\,.
\end{align*}
Then we define $\tilde v_\delta\in H^1(\Omega;\mathbb S^1)$ by setting
\begin{align*}
\tilde v_\delta =\begin{cases}
v_\delta
&\quad\text{in }\Omega_{\delta_0}\,,
\\
v_\delta\circ\Psi_\delta
&
\quad\text{in }\Omega\setminus \Omega_{\delta_0}\,.
\end{cases}
\end{align*}
Using the above estimate on $D\Psi_\delta$ we see that
\begin{align*}
|\dv\tilde v_\delta -(\dv v_\delta ) \circ\Psi_\delta |
&
=\Big| 
\tr\big(Dv_\delta(\Psi_\delta) (D\Psi_\delta -\I_{\R^2}) \big)
\Big|
\\
&
\leq c \,\delta |\nabla v_\delta| \circ\Psi_\delta
\qquad\text{on }\Omega\setminus\Omega_{\delta_0} \,,
\\
\text{and }
|\det(D\Psi_\delta^{-1})-1|
&\leq c\, \delta 
\qquad\text{on }\Omega_\delta\setminus\Omega_{\delta_0}\,,
\end{align*}
for some $c>0$ depending on $\Omega$,
and therefore
\begin{align*}
\bigg|\int_{\Omega\setminus \Omega_{\delta_0}}(\dv \tilde v_\delta)^2\, dx 
-\int_{\Omega_\delta\setminus\Omega_{\delta_0}}(\dv v_\delta)^2\, dx
\bigg|
\lesssim c\delta \int_{\Omega_\delta\setminus \Omega_{\delta_0}}|\nabla v_\delta|^2\, dx
\end{align*}
Thanks to Lemma~\ref{l:vdeltaH1}, this last quantity tends to zero as $\delta\to 0$.
Invoking  Proposition~\ref{p:suffB1/2_4c0}
and using that $|\Omega\setminus\Omega_\delta|\to 0$, we 
are therefore able to conclude that
\begin{align*}
\int_{\Omega}(\dv \tilde v_\delta)^2\, dx 
\to \int_{\Omega}(\dv u)^2\, dx\,.
\end{align*}
Moreover, 
the bi-Lipschitzness of $\Psi_\delta$ and Lemma~\ref{l:vdeltaH1} ensure that
\begin{align*}
\int_{\Omega}|\nabla\tilde v_\delta|^2\, dx \lesssim \int_{\Omega_\delta}|\nabla v_\delta|^2\, dx
=o(1/\delta)\,,
\end{align*}
as $\delta\to 0$,
hence
 we see that 
\begin{align*}
E_\e(\tilde v_\delta) -\int_{\Omega}(\dv \tilde v_\delta)^2\, dx 
=
\e\int_{\Omega}(\curl\tilde v_\delta)^2\, dx
=o(\e/\delta)\,.
\end{align*}
Therefore, choosing $\delta=\delta_\e =\e$ we conclude that
\begin{align*}
E_\e(\tilde v_{\delta}) \to \int_{\Omega}(\dv u)^2\, dx\,.
\end{align*}
Finally we also check that $\tilde v_\delta\to u$ in $L^2(\Omega)$.
This follows from the facts that
 $v_\delta\to u$ in $L^2(\Omega_{\delta_0})$,
  $u\circ\Psi_\delta \to u$ in $L^2(\Omega\setminus\Omega_{\delta_0})$
 by dominated convergence, 
 and
\begin{align*}
\int_{\Omega\setminus \Omega_{\delta_0}} |\tilde v_\delta -u\circ \Psi_\delta|^2\, dx
&
\leq (1+c\,\delta) \int_{\Omega_\delta\setminus\Omega_{\delta_0}}|v_\delta - u|^2 \, dy \to 0\,,
\end{align*}
where we used $|\det(D\psi_\delta^{-1})|\leq 1+c\, \delta$.
\end{proof}

\section{The one-dimensional case}\label{s:1D}

In this section we prove Theorem~\ref{t:1D}. 
%
We start with a sharp version of the compensated regularity estimates, 
which can be obtained rather easily in this one-dimensional case.

\begin{proposition}\label{p:reg1D}
We have
\begin{align}\label{e:reg1D}
|u|_{B^{1/2}_{4,\infty}(I)}^4
\lesssim 
\int_I (u_1')^2\, dx\,,
\end{align}
for all $u\in H^1(I;\mathbb S^1)$.
\end{proposition}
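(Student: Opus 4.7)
The strategy is to split $\|D^h u\|_{L^4}^4 \lesssim \int |D^h u_1|^4 + \int |D^h u_2|^4$ and bound each part by $|h|^2 \int (u_1')^2$. The $u_1$ part is elementary since we have direct $L^2$ control on $u_1'$; the $u_2$ part is the real content, and one exploits the identity $u_2^2 = 1 - u_1^2$ together with the Hölder-$\tfrac12$ character of the square root, carefully handling the sign changes of $u_2$.

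For the first component, I would use the pointwise bound $\|D^h u_1\|_{L^\infty}\le 2$ together with Cauchy--Schwarz on $D^h u_1(x)=\int_x^{x+h}u_1'$, which gives $\|D^h u_1\|_{L^2}^2\le |h|^2\|u_1'\|_{L^2}^2$ and hence
\begin{equation*}
\int_I |D^h u_1|^4\,dx \;\le\; 4\int_I |D^h u_1|^2\,dx \;\lesssim\; |h|^2 \int_I (u_1')^2\,dx.
\end{equation*}

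For the second component, introduce $v=|u_2|=\sqrt{1-u_1^2}$. The Hölder continuity of $\sqrt{\,\cdot\,}$ applied with $a=1-u_1(x+h)^2$ and $b=1-u_1(x)^2$ yields $|D^h v|^2\le |u_1(x)^2-u_1(x+h)^2|\le 2|D^h u_1|$, so $\int|D^h v|^4\lesssim |h|^2\int(u_1')^2$. To transfer this to $u_2=\pm v$, split $I$ into the set $A$ of points $x$ where $u_2$ does not change sign on $[x,x+h]$ (hence $|D^h u_2|=|D^h v|$, giving the desired bound directly) and its complement $B$. On $B$ there exists $y\in[x,x+h]$ with $u_2(y)=0$, hence $u_1(y)=\pm 1$, and therefore
\begin{equation*}
u_2(x)^2 \;=\; u_1(y)^2-u_1(x)^2 \;\le\; 2|u_1(y)-u_1(x)| \;\le\; 2\int_x^{x+h}|u_1'|,
\end{equation*}
and likewise for $u_2(x+h)^2$. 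Combining $|D^h u_2|\le v(x)+v(x+h)$ with $(a+b)^2\le 2(a^2+b^2)$, this produces
\begin{equation*}
|D^h u_2|^4 \;\lesssim\; |h|\int_x^{x+h}(u_1'(t))^2\,dt \qquad\text{on }B,
\end{equation*}
and a Fubini swap yields $\int_B|D^h u_2|^4\lesssim |h|^2\|u_1'\|_{L^2}^2$, matching the bound on $A$.

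Adding the two components gives $\|D^h u\|_{L^4}^4\lesssim |h|^2\int_I(u_1')^2\,dx$ uniformly in $h$, which is exactly \eqref{e:reg1D}. The only nontrivial step is the treatment of the ``bad'' set $B$ where $u_2$ changes sign: the key observation is that a zero of $u_2$ forces $u_1$ to touch $\pm 1$, and the Hölder-$\tfrac12$ inequality for $\sqrt{\,\cdot\,}$ then converts this into a pointwise bound on $u_2$ in terms of a local $L^1$ average of $|u_1'|$, strong enough after a Fubini interchange to match the scaling $|h|^2$.
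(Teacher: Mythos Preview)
Your argument is correct, but it differs from the paper's proof. The paper lifts $u=e^{i\varphi}$ with $\varphi\in H^1(I)$, introduces $F(t)=\int_0^t|\sin s|\,ds$, and observes that $|u_1'|=|F(\varphi)'|$; since $F^{-1}$ is $C^{1/2}$, one gets
\[
\int_{I_h}|D^h\varphi|^4\,dx \;\lesssim\; \int_{I_h}|D^h F(\varphi)|^2\,dx \;\lesssim\; |h|^2\int_I (u_1')^2\,dx,
\]
and concludes by the Lipschitz continuity of $t\mapsto e^{it}$.

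You instead work directly with the components, treating $u_1$ trivially and $u_2$ via $|u_2|=\sqrt{1-u_1^2}$ plus a case analysis on sign changes. The key difference is that the paper's route actually proves the stronger statement $\varphi\in B^{1/2}_{4,\infty}(I)$, which is precisely what is needed later in the recovery-sequence construction (Proposition~\ref{p:recov1D}); your argument only yields the regularity of $u$ itself. On the other hand, your approach is more self-contained in that it avoids the lifting step and operates directly with the pointwise constraint $u_1^2+u_2^2=1$. Both arguments encode the same ``$\sqrt{\cdot}$ is $C^{1/2}$'' mechanism, but the phase-based version packages the sign ambiguity of $u_2$ into the smooth map $F$ rather than handling it by the $A/B$ decomposition you use.
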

\begin{proof}[Proof of Proposition~\ref{p:reg1D}]
Let $u\in H^1(I;\mathbb S^1)$ and $\varphi\in H^1(I;\R)$ such that $u=e^{i\varphi}$,
and note that $|u_1'|=|(\cos\varphi)'|
= |\sin\varphi| \, |\varphi'|=|F(\varphi)'|$, where
 $F\in C^1(\R;\R)$  is  given by 
\begin{align}\label{eq:F}
F(t)=\int_0^t |\sin s|\, ds\qquad\forall t\in\R\,.
\end{align}
Note also that $F$ is bijective and its inverse $F^{-1}$ is $C^{1/2}$. 
Explicitly, $F$ and $F^{-1}$ are characterized by
\begin{align*}
&F(t)=1-\cos t \text{ for }t\in [0,\pi],\\
&F(t +\ell\pi)=2\ell +F(t)\text{ for }t\in\R,\ell\in\Z,\\
&F^{-1}(s)
=\arccos (1-s)\text{ for }s\in [0,2],
\\
&
F^{-1}(s+2\ell)=\ell\pi +F^{-1}(s)\text{ for }s\in\R,\ell\in\Z,
\end{align*}
where $\arccos\colon [-1,1]\to [0,\pi]$ is the left inverse of $\cos$ on $[0,\pi]$.
Since the function $F^{-1}$ is $C^{1/2}$, we have, letting $w=F(\varphi)$ and $I_h=I\cap (I-h)$,
\begin{align*}
\int_{I_h} |\varphi(x+h)-\varphi(x)|^4\, dx
&
\lesssim \int_{I_h} |w(x+h)-w(x)|^2\, dx 
\\
&
\lesssim  |h|^2\int_I (w')^2\, dx =|h|^2 \int_I (u_1')^2\, dx,
\end{align*}
which implies \eqref{e:reg1D} since $u$ is a Lipschitz function of $\varphi$.
\end{proof}

Proposition~\ref{p:reg1D} implies the boundedness and compactness statement in Theorem~\ref{t:1D}.
The lower bound of the $\Gamma$-convergence statement also follows
rather directly:
 any sequence $(u_\e)\subset H^1(I;\mathbb S^1)$ such that
$\sup E_\e^{\mathrm{1D}}(u_\e)<\infty$ and $u_\e\to u$ in $\mathcal D'(\Omega)$ is bounded in $B^{1/2}_{4,\infty}(I;\mathbb S^1)$,
so $u$ must belong to that space, and the simple inequality $E_\e^{\mathrm{1D}}(u_\e)\geq E_0^{\mathrm{1D}}(u_\e)$ implies that $(u_{\e 1})$ is bounded, hence weakly convergent, in $H^1(I)$
and therefore
\begin{align*}
\liminf_{\e\to 0} E_\e^{\mathrm{1D}}(u_\e) \geq E_0^{\mathrm{1D}}(u)\,,
\end{align*}
thanks to the weak lower semicontinuity of $v\mapsto \int_I (v')^2\,dx$ on $H^1(I)$. 
Hence it only remains to prove the upper bound part of the $\Gamma$-convergence.

\begin{proposition}\label{p:recov1D}
Let $u\in B^{1/2}_{4,\infty}(I;\mathbb S^1)$ such that $u_1\in H^1(I)$. Then there exists $(u_\e)\subset H^1(I;\mathbb S^1)$ such that $u_\e\to u$ in $\mathcal D'(I)$ and $E_\e^{\mathrm{1D}}(u_\e)\to E_0(u)$ as $\e\to 0$.
\end{proposition}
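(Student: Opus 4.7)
The plan is to build the recovery sequence from a continuous lifting of $u$, combined with a carefully \emph{regularized} inverse of the function $F$ from Proposition~\ref{p:reg1D}.

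First, since $u\in B^{1/2}_{4,\infty}(I;\mathbb S^1)\hookrightarrow C^0(I;\mathbb S^1)$ by Sobolev embedding in one dimension and $I$ is a connected interval, I would fix a continuous lifting $\varphi\colon I\to\R$ with $u=e^{i\varphi}$ and set $w:=F(\varphi)$, which lies in $H^1(I)$ with $|w'|=|u_1'|$ a.e. The naive candidate $F^{-1}(w\ast\rho_\delta)$ will fail to belong to $H^1$: its derivative blows up like $|x-x_0|^{-1/2}$ at transversal crossings of $2\Z$ by $w\ast\rho_\delta$. To cure this, I would cap $(F^{-1})'$ at $1/\sqrt\delta$ by introducing
\begin{align*}
F_\delta(t):=\int_0^t\max(|\sin s|,\sqrt\delta)\,ds,
\end{align*}
which is $C^1$, strictly increasing with $F_\delta'\in[\sqrt\delta,1]$, and satisfies $\|F_\delta-F\|_{L^\infty(\R)}=O(\delta)$. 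Setting $w_\delta:=\tilde w\ast\rho_\delta$ for a bounded $H^1(\R)$-extension $\tilde w$ of $w$, $\varphi_\delta:=F_\delta^{-1}(w_\delta)$, and $u_\delta:=e^{i\varphi_\delta}$, the bound $|\varphi_\delta'|\leq|w_\delta'|/\sqrt\delta$ places $u_\delta$ in $H^1(I;\mathbb S^1)$; uniform convergence $w_\delta\to w$ together with $F_\delta^{-1}\to F^{-1}$ on compacts then forces $\varphi_\delta\to\varphi$ uniformly and hence $u_\delta\to u$ in $L^2$.

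Next, to identify the energies, the identity $\varphi_\delta'=w_\delta'/F_\delta'(\varphi_\delta)$ gives
\begin{align*}
(u_{\delta,1}')^2=\sin^2\varphi_\delta\,(\varphi_\delta')^2=\min\!\Bigl(1,\tfrac{\sin^2\varphi_\delta}{\delta}\Bigr)(w_\delta')^2,
\end{align*}
so that $\int_I(u_{\delta,1}')^2\,dx=\int_I(w_\delta')^2\,dx-R_\delta$ with $0\leq R_\delta\leq\int_{\Sigma_\delta}(w_\delta')^2\,dx$, where $\Sigma_\delta:=\{|\sin\varphi_\delta|<\sqrt\delta\}$. The main term tends to $\int(w')^2=\int(u_1')^2$, and for the remainder I would combine Jensen's inequality,
\begin{align*}
\int_{\Sigma_\delta}(w_\delta')^2\,dx\leq\int|w'(y)|^2\,\mathbf 1_{\{\dist(y,\Sigma_\delta)\leq\delta\}}(y)\,dy,
\end{align*}
with the fact that uniform convergence $\varphi_\delta\to\varphi$ forces $\Sigma_\delta\subset\{|\sin\varphi|<\sqrt\delta+o(1)\}$, a set whose $\delta$-fattening shrinks to $Z:=\{u_2=0\}$; since $w\in 2\Z$ on $Z$, the chain rule for $H^1$-functions yields $w'=0$ a.e.\ on $Z$, and dominated convergence delivers $R_\delta\to 0$. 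For the curl part, $F_\delta'\geq\sqrt\delta$ yields $\int(u_{\delta,2}')^2\leq\int(\varphi_\delta')^2\leq\|w'\|_{L^2}^2/\delta$, so choosing $\delta=\delta_\e$ with $\delta_\e\to 0$ and $\e/\delta_\e\to 0$ (for instance $\delta_\e=\sqrt\e$) will give $E_\e^{\mathrm{1D}}(u_{\delta_\e})\to E_0^{\mathrm{1D}}(u)$.

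The hard part will be the vanishing of the remainder $R_\delta$, for which both hypotheses on $u$ enter crucially: the $B^{1/2}_{4,\infty}$-regularity provides continuity of $u$ (hence of $\varphi$) and thus concentration of $\Sigma_\delta$ near $\{u_2=0\}$, while $u_1\in H^1$ provides $w'\in L^2$ with $w'=0$ on that same zero set, which combined with dominated convergence makes the remainder negligible against the $O(1/\delta)$ curl cost.
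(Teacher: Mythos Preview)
Your proposal is correct and takes a genuinely different, cleaner route than the paper.

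The paper mollifies the lifting itself: $\varphi_\delta=\varphi*\rho_\delta$, $u^{[\delta]}=e^{i\varphi_\delta}$. The core task then becomes showing $F(\varphi_\delta)'\to F(\varphi)'$ in $L^2$, a commutator problem comparing $F(\varphi*\rho_\delta)'$ with $[F(\varphi)*\rho_\delta]'$. This is handled via an elaborate splitting of $I$ into ``elliptic'' intervals (where $|\sin\varphi|\geq\lambda$, so $\varphi$ is locally $H^1$ and the commutator is small in terms of $\delta/\lambda^2$) and ``non-elliptic'' intervals (where $|\sin\varphi|\leq 2\lambda$ and the $C^{1/2}$ regularity of $F^{-1}$ takes over), followed by sending $\delta\to 0$ then $\lambda\to 0$. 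The $B^{1/2}_{4,\infty}$ hypothesis enters quantitatively, both through the $C^{1/4}$ embedding to set up the interval decomposition and to obtain $\int(\varphi_\delta')^2\lesssim|\varphi|_{B^{1/2}_{4,\infty}}^2/\delta$ for the curl term.

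Your approach mollifies the $H^1$ object $w=F(\varphi)$ instead and inverts through a uniformly nondegenerate $F_\delta$. This sidesteps the commutator entirely: $(u_{\delta,1}')^2$ coincides with $(w_\delta')^2$ except on the shrinking set $\Sigma_\delta$, and the remainder dies by dominated convergence using $w'=0$ a.e.\ on $\{u_2=0\}$. You use $B^{1/2}_{4,\infty}$ only qualitatively (for continuity of $\varphi$), while the curl bound $\int(\varphi_\delta')^2\leq\|w'\|_{L^2}^2/\delta$ comes from $w\in H^1$ alone. Two small remarks: the assertion $w=F(\varphi)\in H^1$ with $|w'|=|u_1'|$ is not immediate since $\varphi$ is a priori only continuous and the chain rule needs justification---this is exactly what the paper proves in Lemma~\ref{l:1Dcont}, and the paper's own proof of the proposition glosses over the same point; and $\|F_\delta-F\|_{L^\infty(\R)}=O(\delta)$ should read $O(\delta)$ on compact sets (the error grows linearly in $|t|$), which is harmless since only the bounded range of $w$ matters.
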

\begin{proof}
Since  $B^{1/2}_{4,\infty}(I)\subset C^0(\overline I)$, 
the continuous $\mathbb S^1$-valued map $u$ admits a lifting $\varphi\in C^0(\overline I;\R)$ such that $u=e^{i\varphi}$.
We extend $\varphi$ to a continuous function $\varphi\colon\R\to\R$ which is smooth with compact support outside $I$ and constant on $[a-1,a]$ and $[b,b+1]$, where $I=(a,b)$. 
This also provides an extension of $u=e^{i\varphi}$  to $\R$.

Locally (that is, on any small interval where the image of $u$ is contained in an arc of length less than $\pi$),
the lifting $\varphi$ is given locally by $\varphi =\Theta(u)$ where $\Theta$ is a smooth left inverse of $t\mapsto e^{it}$ on an open interval of size $\pi$
(that is,  $\Theta(e^{it})=t$ for $t$ in that interval).
This implies that $\varphi$ inherits the regularity of $u$, 
hence we have
 $\varphi\in B^{1/2}_{4,\infty}(\R)$.
Moreover we have
$u_1=\cos\varphi\in H^1_{\loc}(\R)$, 
which implies $F(\varphi)\in H^1_{\loc}(\R)$, where $F$ is defined in \eqref{eq:F}.

In what follows,
we denote 
 convolution with a smooth compactly supported kernel $\rho_\delta(x)=\delta^{-1}\rho(x/\delta)$, $\rho\in C_c^\infty(-1,1)$, $\int\rho=1$ by a subscript $\delta$.
 
According to this notation, we 
consider the mollified phase function  $\varphi_\delta =\varphi * \rho_\delta$,
and define $u^{[\delta]} =e^{i\varphi_\delta}\in C_c^1(\R;\mathbb S^1)$.
Then we have $u^{[\delta]}\to e^{i\varphi}=u$ in $\mathcal D'(\R)$ as $\delta\to 0$,
and we claim that
\begin{align}\label{eq:recov1Dudelta}
\int_I |(u^{[\delta]}_1)'|^2\, dx \to \int_I (u_1')^2\, dx  = E_0(u).
\end{align}
Granted this, 
noting that
\begin{align*}
\varphi_\delta'(x)=\frac{1}{\delta}\int (\varphi(x-\delta y)-\varphi(x))\rho'(y)\, dy,
\end{align*}
implies
\begin{align*}
\int_I (\varphi_\delta')^2\, dx
&
\leq \frac{\|\rho'\|_{L^1}}{\delta^2}\int \|\varphi(\cdot -\delta y)-\varphi\|_{L^2}^2 |\rho'(y)|\, dy 
\\
&\lesssim
\frac{ \|\rho'\|_{L^1}}{\delta^2}\int \|\varphi(\cdot -\delta y)-\varphi\|_{L^4}^2 |\rho'(y)|\, dy,
\end{align*}
and therefore
\begin{align*}
\int_I |(u^{[\delta]}_2)'|^2\, dx  \leq \int_I
(\varphi_\delta')^2\, dx \lesssim \frac{1}{\delta}\|\varphi\|^2_{B^{1/2}_{4,\infty}},
\end{align*}
we see that choosing $\delta=\delta_\e =\sqrt\e$ 
ensures the recovery sequence property
\begin{align*}
E_\e(u^{[\delta]})\to \int_{I} (u_1')^2\, dx,
\end{align*}
which proves Proposition~\ref{p:recov1D}.

Thus it all boils down to showing \eqref{eq:recov1Dudelta}, 
which
 is equivalent to $F(\varphi_\delta)'\to F(\varphi)'$ in $L^2(I)$
 since $|(u^{[\delta]}_1)'|=|F(\varphi_\delta)'|$, and we devote the remainder of the proof to establishing this convergence.
 
 To that end
 we compute
\begin{align*}
&
\frac{d}{dx}
\left[
F(\varphi)_\delta (x)
-
F(\varphi_\delta(x))
\right]
\\
&
=
\int F(\varphi(y))\rho_\delta'(x-y)\, dy
-
F'(\varphi_\delta(x))\varphi_\delta'(x)
\\
&
=\int\Big(F(\varphi(y))- F'(\varphi_\delta(x))\varphi(y) \Big)\rho_\delta'(x-y)\, dy
\\
&
=\int\Big(
F(\varphi(y))-F(\varphi_\delta(x))
- F'(\varphi_\delta(x))(\varphi(y)-\varphi_\delta(x))
\Big)\rho_\delta'(x-y)\, dy
\\
&
=\int R(\delta,x,y) (\varphi(y)-\varphi_\delta(x))^2\rho_\delta'(x-y)\, dy,
\end{align*}
where
\begin{align*}
R(\delta,x,y)
&
=\int_0^1 (1-t)F''(\varphi_\delta(x) +t(\varphi(y)-\varphi_\delta(x)))\,dt.
\end{align*}
Since $|R|\leq \sup |F''| \leq 1$, we deduce
\begin{align*}
&
\left| F(\varphi_\delta)'-[F(\varphi)_\delta]'\right|
\leq
\frac{1}{\delta}
\int (\varphi(x-\delta y)-\varphi(x-\delta z))^2\, \rho(z)\, dz \, |\rho'(y)|\,dy,
\end{align*}
and by Jensen's (or Cauchy-Schwarz') inequality,
\begin{align}
&
\left(F(\varphi_\delta)'-[F(\varphi)_\delta]'\right)^2
\nonumber
\\
&
\leq
\frac{\|\rho'\|_{L^1}}{\delta^2}
\int (\varphi(x-\delta y)-\varphi(x-\delta z))^4\, \rho(z)\, dz \, |\rho'(y)|\,dy.
\label{eq:estimFvarphidelta}
\end{align}
Integrating this on $I$ gives a bounded right-hand side
 thanks to the fact that $\varphi\in B^{1/2}_{4,\infty}(\R)$,
 but we would really like to obtain 
 a bound that tends to $0$ as $\delta\to 0$, in order to deduce that $F(\varphi_\delta)'\to F(\varphi)'$ in $L^2$.

In order to obtain that convergence, 
we observe that the above crude estimate can be refined by using the additional information that $u_1\in H^1$.
Since $|u_1'|=|F(\varphi)'|=|F'(\varphi)| \, |\varphi'|$,
that additional information implies indeed that, 
 away from the points where $|\sin\varphi|=|F'(\varphi)|=0$, we actually have $H^1$ control on $\varphi$. 
 
Observe that $B^{1/2}_{4,\infty}\subset C^{1/4}$ \cite[(2.7.1/12)]{triebel83}, that is  
 $[\varphi]_{1/4}\leq C \|\varphi\|_{B^{1/2}_{4,\infty}}$, where we denote by $[\varphi]_{1/4}$
 the $C^{1/4}$ seminorm of $\varphi$. 
We fix a small parameter $\lambda \geq 2 [\varphi]_{1/4}\delta^{1/4}$.
Then, from the definition of this seminorm, we have that
\begin{align}\label{eq:holder_varphi}
|\varphi(x)-\varphi(y)|\leq \frac{\lambda}{2\delta^{\frac 14}}
|x-y|^{\frac 14}\quad\forall x,y\in\R\,,
\end{align}
and we deduce in particular the property
\begin{equation}
\label{eq:lmb}
|x-y|\geq
16\,\delta
\quad
\text{ whenever }x,y
\text{ satisfy }
 |\sin{\varphi(y)}-\sin{\varphi(x)}|\geq\lambda
 \,.
\end{equation}
This property allows us to write $I$ as a union of mutually disjoint 
and not-too-small `elliptic' intervals
$\lbrace I_j^{e,\lambda}\rbrace$
 and `nonelliptic' intervals
$\lbrace I_k^{ne,\lambda}\rbrace$.
Specifically, we claim
\begin{align}
&
I
= \bigsqcup_j I_j^{e,\lambda} \sqcup \bigsqcup_k I_k^{ne,\lambda},
\qquad |I_j^{e,\lambda}|,|I_k^{ne,\lambda}|\geq 8\delta\,,
\nonumber
\\
&
|\sin\varphi|\geq \lambda\text{ in each }I^{e,\lambda}_j,
\quad
|\sin\varphi|\leq 2\lambda\text{ in each }I^{ne,\lambda}_k\,.\label{eq:ellipt_nonellipt_int}
\end{align}
On the intervals $I^{e,\lambda}_j$ the integrand
$|u_1'|^2=|\sin(\varphi)|^2 |\varphi'|^2$ is `elliptic' because 
$|\sin \varphi|$ stays away from $0$,
while on the intervals $I^{ne,\lambda}_k$ it is `nonelliptic' because $|\sin\varphi|$ stays close to zero.
Note that two disjoint elliptic intervals are separated by a non-elliptic interval of length at least $8\delta$,  so that the slightly enlarged elliptic intervals
$I^{e,\lambda}_j+(-2\delta,2\delta)$ 
(whose left/right endpoint are that of $I^{e,\lambda}_j$ shifted left/right by $2\delta$)
are still disjoint.
And the same holds for the slightly enlarged non-elliptic intervals
$I^{ne,\lambda}_k +(-2\delta,\delta)$.

In order to obtain this decomposition of $I$, 
one can for instance 
monitor the variations of the continuous function $f(x)=|\sin\varphi(x)|$ along the interval 
$\overline I=[a,b]$.
We set $x_0=a$ and define by induction a strictly increasing sequence  $(x_\ell)$ satisfying
\begin{align*}
x_{\ell +1} &
=\max \big( 
\sup J^{ne}_\ell, \sup J^e_\ell \big)\,,
\\
\text{where }
J^{ne}_\ell&=\lbrace x > x_\ell \colon f(x) \leq  2\lambda\text{ on }[x_\ell,x)\rbrace,
\\
J^e_\ell&
=\lbrace x > x_\ell \colon f(x) \geq \lambda\text{ on }[x_\ell,x)\rbrace\,.
\end{align*}
Here we adopt the convention that the supremum of an empty interval is  $-\infty$.
If one of these two intervals is empty, the other interval must have length at least $16\delta$ due to \eqref{eq:lmb}.
And if both intervals are non-empty (which actually happens only in the first iteration), then the largest of them 
must contain a transition of $f$ between $\lambda$ and $2\lambda$, which again has length at least $16\delta$ due to \eqref{eq:lmb}.
Therefore the sequence $(x_\ell)$ defined that way satisfies 
$x_{\ell +1}-x_\ell\geq 16\delta$.
Moreover, the definition of $x_{\ell +1}$ clearly implies
that the interval $I_\ell = (x_\ell,x_{\ell+1}]$ is either elliptic (i.e. $f\geq \lambda$ on $I_\ell$) or nonelliptic (i.e. $f\leq 2\lambda$ on $I_\ell$).
There exists an integer $N\geq 0$ such that $x_N < b \leq x_{N+1}$.
If $b-x_N\geq 8\delta$ we set $I_N=(x_n,b)$, and the disjoint intervals $I_0,\ldots, I_N$ are all of length at least $8\delta$ and either elliptic or nonelliptic.
If $b-x_N< 8\delta$, 
then thanks to \eqref{eq:lmb} and since $f(x_N)\in \lbrace \lambda,2\lambda\rbrace$, 
the interval $I_N=(b-8\delta,b)$ must be either elliptic or nonelliptic, 
and we modify $I_{N-1}$ into $I_{N-1}=(x_{N-1},b-8\delta]$, which is still either nonelliptic or elliptic, and has length at least $8\delta$.
In that case again, the disjoint intervals $I_0,\ldots, I_N$ 
are all of length at least $8\delta$ and either elliptic or nonelliptic. In all cases, we have obtained \eqref{eq:ellipt_nonellipt_int}. 

Now we come back to estimating 
$F(\varphi_\delta)'-[F(\varphi)_\delta]'$,
taking advantage of that decomposition.
On any elliptic interval
$I^{e,\lambda}_j$ we have $|\sin\varphi|\geq \lambda$,
recall \eqref{eq:ellipt_nonellipt_int}.
Thanks to the Hölder regularity 
\eqref{eq:holder_varphi}, we have in fact the similar lower bound
$|\sin\varphi|\geq c\lambda$ for $c=1-2^{-3/4}>0$,
on the 
enlarged interval 
$I^{e,\lambda}_j +(-2\delta,2\delta)$.
As a consequence, we obtain
\begin{align}\label{eq:ellipt_int}
\int_{I^{e,\lambda}_j +(-2\delta,2\delta)}(\varphi')^2\, dx 
\lesssim \frac{1}{\lambda^2}
\int_{I^{e,\lambda}_j + (-2\delta,2\delta) }(F(\varphi)')^2\, dx,
\end{align}
and for any $y,z\in [-1,1]$,
\begin{align*}
&
\int_{I^{e,\lambda}_j} (\varphi(x-\delta y)-\varphi(x-\delta z))^4\, dx
\\
&
\leq \sqrt{2} [\varphi]_{1/4}^{2}\delta^{1/2} \int_{I^{e,\lambda}_j}(\varphi(x-\delta y)-\varphi(x-\delta z))^2\, dx 
\\
&
=\sqrt{2} [\varphi]_{1/4}^{2}\delta^{1/2} \int_{I^{e,\lambda}_j}
\left( \int_0^1\varphi'(x-\delta y +t \delta(y-z))\, dt \delta (y-z)\right)^2\, dx
\\
&\leq 4\sqrt{2} [\varphi]_{1/4}^{2}\delta^{1/2+2}\int_{I^{e,\lambda}_j}
 \int_0^1\left(\varphi'(x-\delta y +t \delta(y-z))\right)^2\, dt \, dx
\\
&
\leq  4\sqrt{2}
[\varphi]_{1/4}^{2}\delta^{1/2+2}
\int_{I^{e,\lambda}_j +(-2\delta,2\delta)}(\varphi')^2\, dx.
\end{align*}
Then, invoking \eqref{eq:ellipt_int}, it follows that
\[
\int_{I^{e,\lambda}_j} 
(\varphi(x-\delta y)-\varphi(x-\delta z))^4\, dx \lesssim [\varphi]_{1/4}^{2}\frac{\delta^{1/2+2}}{\lambda^2}
\int_{I^{e,\lambda}_j +(-2\delta,2\delta)}(F(\varphi)')^2\, dx.
\]
Since the intervals $I_j^{e,\lambda}+(-2\delta,2\delta)$ are disjoint, on the union $I^{e,\lambda}=\bigcup_j I_j^{e,\lambda}$ we deduce, recalling also \eqref{eq:estimFvarphidelta},
\begin{align*}
\int_{I^{e,\lambda}}
\left (F(\varphi_\delta)'-[F(\varphi)_\delta]'\right)^2
\, dx 
&
\lesssim
 [\varphi]_{1/4}^{2}\frac{\delta^{1/2}}{\lambda^2}
\int_I(F(\varphi)')^2\, dx.
\end{align*}
On the nonelliptic intervals we have simply, recalling that $F^{-1}$ is $C^{1/2}$,
\begin{align*}
&
\int_{I^{ne,\lambda}_k}(\varphi(x-\delta y)-\varphi(x-\delta z))^4\, dx
\\
&
\lesssim 
\int_{I^{ne,\lambda}_k}(F(\varphi(x-\delta y))-F(\varphi(x-\delta z)))^2\, dx
\\
&
 \lesssim \delta^2
 \int_{I^{ne,\lambda}_k +(-2\delta,2\delta)}(F(\varphi)')^2\, dx,
\end{align*}
and on their union $I^{ne,\lambda}=\bigcup_k I_{k}^{ne,\lambda}$ we obtain, using again \eqref{eq:estimFvarphidelta},
\begin{align*}
\int_{I^{ne,\lambda}}
\left (F(\varphi_\delta)'-[F(\varphi)_\delta]'\right)^2
\, dx
& 
\lesssim \int_{I^{ne,\lambda}+(-2\delta,2\delta)} (F(\varphi)')^2\, dx 
\\
&
\leq \int_{\lbrace |\sin\varphi|\leq 3\lambda\rbrace }(F(\varphi)')^2\, dx.
\end{align*}
For the last inequality we used the fact that $|\sin(\varphi)|\leq 2\lambda$ on $I^{ne,\lambda}$ and the Hölder continuity \eqref{eq:holder_varphi} of $\varphi$.
Summing the estimates on $I^{e,\lambda}$ and $I^{ne,\lambda}$ we infer
\begin{align*}
\int_I
\left (F(\varphi_\delta)'-[F(\varphi)_\delta]'\right)^2
\, dx
&
\lesssim 
[\varphi]_{1/4}^{2}\frac{\delta^{1/2}}{\lambda^2}
\int_I
(F(\varphi)')^2\, dx
\\&+ 
\int_{\lbrace |\sin\varphi|\leq 3\lambda\rbrace }(F(\varphi)')^2\, dx,
\end{align*}
hence, for any $\lambda>0$,
\begin{align*}
\limsup_{\delta\to 0}
\int_I
\left (F(\varphi_\delta)'-[F(\varphi)_\delta]'\right)^2
\, dx
\lesssim 
\int_{\lbrace |\sin\varphi|\leq 3\lambda\rbrace }(F(\varphi)')^2\, dx.
\end{align*}
Letting $\lambda\to 0$, and recalling $u=e^{i\varphi}$, we deduce
\begin{align*}
\limsup_{\delta\to 0}
\int_I
\left (F(\varphi_\delta)'-[F(\varphi)_\delta]'\right)^2
\, dx
&
\lesssim 
\int_{\lbrace \sin\varphi =0\rbrace}(F(\varphi)')^2\, dx
\\
&
=\int_{\lbrace |u_1| =1\rbrace} (|u_1|')^2\, dx=0.
\end{align*}
The last equality follows e.g. from \cite[Lemma~3.60]{leoni}.
Since $|u_1'|=|F(\varphi)'|\in L^2$, we have
 $[F(\varphi)_\delta]'\to F(\varphi)'$ strongly in $L^2$, 
and we deduce that $F(\varphi_\delta)'$ converges to $F(\varphi)'$ strongly in $L^2$, 
which implies \eqref{eq:recov1Dudelta}. 
\end{proof}

%
%
%
%
%
%
%
%
%
%
%
%

\section{A thin film limit}\label{s:thin}

In this section we prove Theorem~\ref{t:thin} about the $\Gamma$-convergence of the energy
\begin{align*}
E_{\e,\delta}(u)=
\frac 12
\int_{[-1,1]^2} 
\left(\partial_1u_{1} + \frac 1\delta \partial_2 u_2\right)^2 
+
\e \left( \partial_1 u_2 -\frac 1\delta \partial_2 u_1\right)^2\, dx,
\end{align*}
for $u\in H^1([-1,1]^2;\mathbb S^1)$
subject to the boundary conditions $u(\pm 1,x_2)=e_\pm$ for  $x_2\in (-1,1)$, and $u(x_1,-1)=u(x_1,+1)$.

We start by adapting the compensated regularity estimate of Theorem~\ref{t:reg_estim} to this thin periodic case,
thereby proving the equiboundedness and compactness statement in Theorem~\ref{t:thin}.

\begin{lemma}\label{l:thin_reg}
For any  $u\in H^1([-1,1]^2;\mathbb S^1)$
satisfying
 $u(\pm 1,x_2)=e_\pm$ for  $x_2\in (-1,1)$, and $u(x_1,-1)=u(x_1,+1)$ for  $x_1\in (-1,1)$, we have
\begin{align}\label{eq:thinbesovx1}
\int_{[-1,1]^2}|D^{te_1}u|^3 \, dx 
&
\lesssim |t|^{\frac 32}\left(1+
E_{\e,\delta}(u)^{\frac 32}\right)
\,,
\\
\label{eq:thinbesovx2}
\int_{[-1,1]^2}|D^{t e_2}u|^3\, dx
&
\lesssim 
\delta^{\frac 32}|t|^{\frac 32} \Big( 1 +E_{\e,\delta}(u)^{\frac 32} \Big)\,,
\end{align}
for all $t\in \R$.
Here, $u$ is extended to $\R^2$ by setting 
\[
u(x_1,x_2)=\begin{cases} e_+,&\mbox{ if }x_1\geq 1,\\
e_-,&\mbox{ if }x_1\leq -1\,,
\end{cases}
\] and $u(x_1,x_2 +2k)=u(x_1,x_2)$ for all $k\in\Z$.
\end{lemma}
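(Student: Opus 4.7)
The plan is to adapt the proof of Theorem~\ref{t:reg_estim} given in \textsection\ref{s:comp_reg} to the thin periodic setting, exploiting the constant boundary data in $x_1$ and the periodicity in $x_2$ to keep all constants independent of $\delta$.

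\textbf{Rescaling.} First I would pass to the rescaled map $\tilde u(x_1,x_2) = u(x_1,x_2/\delta)$ on the thin rectangle $\Omega_\delta = (-1,1)\times(-\delta,\delta)$. A direct change-of-variables computation gives $\|\dv \tilde u\|_{L^2(\Omega_\delta)}^2 = 2\delta\, E_{\e,\delta}(u)$. The constant boundary values and periodicity in the lemma statement allow $\tilde u$ to be extended to $\R^2$ as a map in $H^1_{\loc}(\R^2;\mathbb S^1)$ by setting $\tilde u\equiv e_\pm$ for $\pm x_1\geq 1$ and identifying $x_2$ modulo $2\delta$. This extended map has $\dv\tilde u\in L^2_{\loc}$, so the kinetic formulation of Lemma~\ref{l:kin} and the compensation identity of Lemma~\ref{l:comp_id} are available.

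\textbf{Estimate in the $e_1$ direction.} I would then rerun the argument of \textsection\ref{ss:pf_reg_kin} using a cutoff $\eta=\eta(x_1)$ depending only on $x_1$, with $\eta\equiv 1$ on $(-1,1)$, $\supp\eta\subset(-2,2)$, and $\|\eta'\|_\infty$ an absolute constant. Since $\eta$ is independent of $x_2$ and $\tilde u$ (hence $A^\tau$) is $2\delta$-periodic in $x_2$, the $\partial_{x_2}$ boundary contribution in the integration by parts against $\eta^2$ vanishes, while the $\partial_{x_1}$ contribution is controlled by the bounded $\|\eta'\|_\infty$. Lemmas~\ref{l:estim_A} and \ref{l:estim_I} then apply on $(-2,2)\times(-\delta,\delta)$, for which $|\Omega|\lesssim\delta$, $\|\dv\tilde u\|_{L^2}\lesssim\sqrt{\delta\, E_{\e,\delta}}$ and $\|\dv\tilde u\|_{L^1}\lesssim\delta\sqrt{E_{\e,\delta}}$. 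Tracking the $\delta$-dependence through the Young absorption step of \textsection\ref{ss:pf_reg_kin} should yield
\begin{align*}
\int_{\Omega_\delta} |D^{\tau e_1}\tilde u|^3\,dx \lesssim \delta\,\Bigl( |\tau|^{3/2}\bigl(1+E_{\e,\delta}(u)^{3/4}\bigr) + \tau^2\, E_{\e,\delta}(u)^{1/2} \Bigr).
\end{align*}
Since $\int_{[-1,1]^2}|D^{te_1}u|^3\,dy = \delta^{-1}\int_{\Omega_\delta}|D^{te_1}\tilde u|^3\,dx$ from the change of variable $x_2=\delta y_2$, this gives \eqref{eq:thinbesovx1} for $|t|$ bounded (using $(1+s)^{3/4}\leq(1+s^{3/2})$ and $t^2\lesssim |t|^{3/2}$ on bounded intervals). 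For $|t|\geq 2$ the estimate is trivial, since $|D^{te_1}u|\leq 2$ and $[-1,1]^2$ has bounded measure.

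\textbf{Estimate in the $e_2$ direction.} For \eqref{eq:thinbesovx2}, I would apply the same machinery to the rotated map $\tilde u'(x)=R\,\tilde u(R^\top x)$ on $R\Omega_\delta=(-\delta,\delta)\times(-1,1)$, which becomes $2\delta$-periodic in $x_1$ and constant in $x_2$ outside $(-1,1)$. A cutoff $\eta=\eta(x_2)$ plays the role of the previous cutoff, with the $x_1$-boundary term now killed by $x_1$-periodicity, and the same argument produces the $\delta$-dependent bound for $\int |D^{\tau e_1}\tilde u'|^3\,dx$, equal by a rotation to $\int_{\Omega_\delta}|D^{\tau e_2}\tilde u|^3\,dx$. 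Writing $D^{te_2}u(y_1,y_2)=D^{\delta t\, e_2}\tilde u(y_1,\delta y_2)$ and setting $\tau=\delta t$ produces the additional $\delta^{3/2}$ prefactor in the right-hand side of \eqref{eq:thinbesovx2}. For $|t|\geq 1$ the periodicity in $x_2$ reduces matters to $|t|\leq 1$.

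\textbf{Main obstacle.} The delicate point is that a direct application of Theorem~\ref{t:reg_estim} on $\Omega_\delta$ with a standard two-dimensional cutoff would force $\|\nabla\eta\|_\infty\sim 1/\delta$ and yield constants that blow up as $\delta\to 0$. The scheme above avoids this by using a one-dimensional cutoff in the \emph{non-periodic} variable, so $\|\nabla\eta\|_\infty$ is absolute; the anisotropy of the thin-film problem then enters only through $|\Omega|$, $\|\dv\tilde u\|_{L^2}$, and the final change of variables. Verifying that every term produced by Lemmas~\ref{l:estim_A} and \ref{l:estim_I} retains the correct $\delta$-homogeneity after Young absorption is the main bookkeeping step.
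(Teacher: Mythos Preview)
Your approach is correct but takes a longer route than the paper. Rather than rerunning the compensation argument of \textsection\ref{ss:pf_reg_kin} on the thin strip with a one-variable cutoff and tracking the $\delta$-homogeneity of every term, the paper observes that the $2\delta$-periodicity of $\tilde u$ in $x_2$ allows one to stack roughly $1/\delta$ copies of $\Omega_\delta$ into the fixed square $[-2,2]^2$, obtaining
\[
\int_{[-2,2]^2}(\dv\tilde u)^2\,dx \;\lesssim\; \frac{1}{\delta}\int_{[-1,1]\times[-\delta,\delta]}(\dv\tilde u)^2\,dx \;\leq\; 2\,E_{\e,\delta}(u)\,,
\]
with no residual $\delta$-dependence. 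Theorem~\ref{t:reg_estim} then applies as a black box on $[-1,1]^2\subset\subset[-2,2]^2$, yielding a single estimate for $\int_{[-1,1]^2}|D^h\tilde u|^3\,dx$ valid for all $h\in\R^2$ simultaneously; both \eqref{eq:thinbesovx1} and \eqref{eq:thinbesovx2} then follow by converting increments of $u$ into increments of $\tilde u$ via the same tiling and the rescaling $D^{te_2}u(x)=D^{\delta t\,e_2}\tilde u(x_1,\delta x_2)$. Your scheme has the merit of demonstrating that the kinetic argument is robust under anisotropic cutoffs, but the paper's tiling trick sidesteps entirely the $\delta$-bookkeeping you flag as the main obstacle, and it handles both coordinate directions with one invocation of Theorem~\ref{t:reg_estim} rather than two separate passes through Lemmas~\ref{l:estim_A}--\ref{l:estim_I}.
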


\begin{proof}[Proof of Lemma~\ref{l:thin_reg}]
Define $\tilde u(x_1,x_2)=u(x_1,x_2/\delta)$ 
for $(x_1,x_2)\in \R^2$, so that
$\tilde u\in H^1_{\loc}$ and
\begin{align*}
\int_{[-1,1]\times [-\delta,\delta]} (\nabla\cdot \tilde u)^2\, dx \leq 2 \delta \, E_{\e,\delta}(u).
\end{align*}
Note that, from the definition of the extension of $u$ to $\R^2$, we have  $\tilde u(x_1,x_2+2\delta)=\tilde u(x_1,x_2)$ and $\tilde u(x_1,x_2)=\tilde u(\pm 1,x_2)$ for $x_1>1$ or $x_1<-1$.
Thus we find
\begin{align*}
\int_{[-2,2]^2} (\nabla\cdot \tilde u)^2\, dx 
&
\lesssim\frac 1\delta \int_{[-1,1]\times [-\delta,\delta]} (\nabla\cdot \tilde u)^2\, dx 
\leq  2 E_{\e,\delta}(u),
\end{align*}
and, for any $p\in [1,\infty)$,
\begin{align*}
\int_{[-1,1]^2}|D^{te_1}u|^p \, dx 
&
=
\frac 1\delta \int_{[-1,1]\times [-\delta,\delta]}|D^{te_1}\tilde u|^p \, dx
\lesssim 
 \int_{[-1,1]^2}|D^{te_1}\tilde u|^p \, dx\,.
\end{align*}
Theorem~\ref{t:reg_estim} implies, for $h\in\R^2$,
\begin{align}\label{eq:thinbesovtildeu}
\int_{[-1,1]^2}|D^{h} \tilde u|^3\, dx  
&
\lesssim |h|^{\frac 32} \left( \int_{[-2,2]^2} (\nabla\cdot \tilde u)^2\, dx +1 \right)^{\frac 32}
\nonumber
\\
&
\lesssim |h|^{\frac 32} \Big( 1 +E_{\e,\delta}(u)^{\frac 32} \Big)\,.
\end{align} 
Applying this to increments $h=te_1$ in the $x_1$-direction, we obtain \eqref{eq:thinbesovx1}.
For increments in the $x_2$-direction, we have
\begin{align*}
D^{ te_2}u(x)
&
=u(x_1,x_2+ t)-u(x_1,x_2)
=\tilde u(x_1, \delta x_2 + \delta t)-\tilde u(x_1,\delta x_2)
\\
&
=D^{\delta t e_2}\tilde u(x_1,\delta x_2),
\end{align*}
hence
\begin{align*}
\int_{[-1,1]^2}|D^{te_2}u|^p\, dx
=\frac 1 \delta \int_{[-1,1]\times [-\delta,\delta]}
|D^{\delta te_2}\tilde u|^p\, dx
\lesssim \int_{[-1,1]^2} |D^{\delta te_2}\tilde u|^p\, dx\,,
\end{align*}
so \eqref{eq:thinbesovtildeu} applied to $h=\delta te_2$ implies \eqref{eq:thinbesovx2}.
\end{proof}

Lemma~\ref{l:thin_reg} 
will enable us to show that limits of bounded energy sequences for $E_{\e,\delta}$ are one-dimensional and belong to $B^{1/2}_{3,\infty}([-1,1]^2;\mathbb S^1)$.
Next we prove a lemma 
which will allow us to conclude
 that such limits actually belong to the smaller space
$B^{1/2}_{4,\infty}([-1,1];\mathbb S^1)$.

\begin{lemma}\label{l:1Dcont}
Assume $u \colon [-1,1]\to\mathbb S^1$ is continuous and $u_1\in H^1([-1,1])$.
Then $u \in B^{1/2}_{4,\infty}([-1,1];\mathbb S^1)$.
\end{lemma}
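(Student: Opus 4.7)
The idea is to adapt the proof of Proposition~\ref{p:reg1D} to the setting where only $u \in C^0$ and $u_1 \in H^1$, rather than $u \in H^1$. Since $[-1,1]$ is simply connected, $u$ admits a continuous lifting $\varphi \in C^0([-1,1];\R)$ with $u = e^{i\varphi}$. Setting $w := F(\varphi)$ with $F$ as in \eqref{eq:F}, the key step is to establish the variational inequality
\begin{align*}
|w(y) - w(x)| \leq \int_x^y |u_1'(t)|\, dt \qquad \forall\, x \leq y \in [-1,1],
\end{align*}
which substitutes for the pointwise identity $|F(\varphi)'| = |u_1'|$ used when $\varphi \in H^1$.

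To prove this key inequality, I would use the Banach indicatrix formula $\mathrm{TV}(f;[a,b]) = \int_{\R} N_f(t)\,dt$, where $N_f(t) = \#(f^{-1}(t)\cap[a,b])$. Assume without loss of generality $\varphi(x) \leq \varphi(y)$. By the intermediate value theorem, $\varphi$ attains every value $r \in [\varphi(x),\varphi(y)]$ somewhere in $[x,y]$; distinct such values $r$ with a common $\cos r = t$ yield distinct preimages of $t$ under $u_1$, so
\begin{align*}
N_{u_1}(t;[x,y]) \geq \#\big(\cos^{-1}(t) \cap [\varphi(x),\varphi(y)]\big).
\end{align*}
Integrating in $t$ and recognizing the right-hand side as $\mathrm{TV}(\cos;[\varphi(x),\varphi(y)]) = \int_{\varphi(x)}^{\varphi(y)} |\sin s|\,ds = |w(y)-w(x)|$ yields $\mathrm{TV}(u_1;[x,y]) \geq |w(y)-w(x)|$. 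Since $u_1 \in H^1 \subset W^{1,1}$, one has $\mathrm{TV}(u_1;[x,y]) = \int_x^y |u_1'|$, completing the step.

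The Besov bound then follows along the same lines as Proposition~\ref{p:reg1D}. Cauchy-Schwarz gives $|w(y)-w(x)|^2 \leq |y-x|\int_x^y |u_1'|^2\,dt$, while the local $C^{1/2}$ regularity of $F^{-1}$ combined with the trivial bound $|u(y)-u(x)| \leq 2$ gives
\begin{align*}
|u(y)-u(x)|^4 \leq C_0\, |w(y)-w(x)|^2
\end{align*}
for an absolute constant $C_0$ (splitting cases according to whether $|\varphi(y)-\varphi(x)| \leq \pi$ or $> \pi$, with the latter forcing $|w(y)-w(x)| > 2$ by the $\pi$-periodicity of $|\sin|$). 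Integrating over $x$ at fixed $h = y - x$ yields $\int |D^h u|^4\, dx \lesssim |h|^2 \|u_1'\|_{L^2}^2$, which is exactly $u \in B^{1/2}_{4,\infty}([-1,1];\mathbb S^1)$. The main obstacle is the Banach indicatrix comparison in the key step, which is required precisely because $\varphi$ need not be weakly differentiable when only $u_1 \in H^1$.
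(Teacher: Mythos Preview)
Your proposal is correct and reaches the same key inequality as the paper, namely
\[
|F(\varphi(y))-F(\varphi(x))|\leq \int_x^y |u_1'(t)|\,dt,
\]
but via a genuinely different argument. The paper proves this inequality \emph{locally} first: for each $x$ it finds a neighborhood $(x-r_x,x+r_x)$ and distinguishes the cases $\sin\varphi(x)\neq 0$ (where $F(\varphi)$ differs from $\pm u_1$ by a constant) and $\sin\varphi(x)=0$ (where a short computation handles the two one-sided branches of $F$ around a zero of $\sin$), then patches these local estimates into absolute continuity of $F(\varphi)$ with $|F(\varphi)'|\leq |u_1'|$ a.e. Your route is more direct: the Banach indicatrix comparison $N_{u_1}(t;[x,y])\geq \#(\cos^{-1}(t)\cap[\varphi(x),\varphi(y)])$ immediately yields $\mathrm{TV}(u_1;[x,y])\geq \mathrm{TV}(\cos;[\varphi(x),\varphi(y)])=|F(\varphi(y))-F(\varphi(x))|$ globally, with no case analysis. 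This is cleaner and more conceptual, at the cost of invoking a slightly less elementary tool; the paper's argument is more hands-on but requires tracking the explicit piecewise form of $F$ near zeros of $\sin$. After the key inequality, both proofs finish identically via the $C^{1/2}$ regularity of $F^{-1}$ and the Lipschitz dependence of $u$ on $\varphi$, exactly as in Proposition~\ref{p:reg1D}.
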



\begin{proof}[Proof of Lemma~\ref{l:1Dcont}]
Note that $|u_2|=\sqrt{1-u_1^2}$ 
is a composition of the $H^1$ function $u_1$ and the $C^{1/2}$ function $x\mapsto\sqrt{1-x^2}$.
Hence, by the
same argument as in Proposition~\ref{p:reg1D},
we have $|u_2| \in B^{1/2}_{4,\infty}([-1,1])$.
But this does not directly imply that 
$u_2\in B^{1/2}_{4,\infty}([-1,1])$,
and we use a slightly different argument to prove Lemma~\ref{l:1Dcont}.

Since $u$ is continuous,
one can define a continuous phase $\varphi$ such that $ u=e^{i\varphi}$ on $[-1,1]$.
Recall the function $F(t)=\int_0^t |\sin(s)|\,ds$ defined in \eqref{eq:F}.
If $\varphi$ were differentiable, 
we would have $|F(\varphi)'|=|\sin\varphi| |\varphi'| =|u_1'|\in L^2([-1,1])$, and conclude that $\varphi \in B^{1/2}_{4,\infty}([-1,1])$ since $F^{-1}$ is $C^{1/2}$, as in Proposition~\ref{p:reg1D}.

This calculation using the chain rule is only formal, because we do not know that $\varphi$ is (even weakly) differentiable.
But we do show in what follows that $F(\varphi)\in H^1([-1,1])$.
First we establish that
\begin{align}
&
\forall x\in [-1,1],\;\exists r_x>0
\text{ such that, for all }
y_1< y_2\in (x-r_x,x+r_x)\,,
\nonumber
\\
&\text{ we have }
\quad
|F(\varphi(y_1))-F(\varphi(y_2))|\leq 
\int_{y_1}^{y_2} |u_1'(y)|\, dy\,.
\label{eq:rx}
\end{align}
To prove  \eqref{eq:rx} we distinguish two cases
depending on the value of $\sin\varphi(x)$.

Let us first assume $\sin\varphi(x)\neq 0$.
There exists $r_x>0$ such that,
on the interval $(x-r_x,x+r_x)$,
the continuous function
$\varphi$ takes values in an interval where $\sin$ does not vanish. In such an interval, we have $F(\varphi(y))=C + \alpha \cos \varphi(y)$ for some constants $C\in\Z$, $\alpha\in \lbrace\pm 1\rbrace$, so $F(\varphi)=C + \alpha  u_1$. 
We deduce that $F(\varphi)\in H^1(x-r_x,x+r_x)$ and $|F(\varphi)'|=| u_1'|$ in $(x-r_x,x+r_x)$,
which implies \eqref{eq:rx}.

Let us now consider the remaining case $\sin\varphi(x)=0$.
There exists $r_x>0$ such that,
on the interval $(x-r_x,x+r_x)$,
 the continuous function $\varphi$ takes values in an interval of length less than $\pi$.
In that interval we have
\begin{align*}
F(\varphi(y))=\begin{cases}
C - \alpha\cos \varphi(y)\quad &\text{if }\varphi(y)>\varphi(x),
\\
C -2  + \alpha\cos \varphi(y)\quad & \text{if }\varphi(y)<\varphi(x),
\end{cases}
\end{align*}
for some constant $C\in\Z$, $\alpha=\cos\varphi(x)\in\lbrace \pm 1\rbrace$.
Let $y_1<y_2\in (x-r_x,x+r_x)$.
If $\varphi(y_1)<\varphi(x)<\varphi(y_2)$,
then there exists $\bar x\in (y_1,y_2)$ such that 
$\varphi(\bar x)=\varphi(x)$,
and  we deduce
\begin{align*}
 F( \varphi(y_2))-F(\varphi(y_1))
 &=-\alpha\cos \varphi(y_2)+2-\alpha\cos \varphi(y_1)
 \\
 &=   -\alpha\cos \varphi(y_2)+2\alpha\cos\varphi(\bar x)-\alpha\cos \varphi(y_1)
 \\
 &=\alpha\left(u_1(\bar x)-u_1(y_2)+u_1(\bar x)-u_1(y_1)\right).
\end{align*}
Therefore,
\begin{align*}
|F( \varphi(y_1))-F(\varphi(y_2))|
&
\leq |u_1(y_1)-u_1(\bar x)| + |u_1(y_2)-u_1(\bar x)|
\\
&
\leq \int_{y_1}^{\bar x} |u_1'|\, dy +\int_{\bar x}^{y_2}|u_1'|\, dy 
\leq \int_{y_1}^{y_2} |u_1'|\, dy\,.
\end{align*}
If $\varphi(y_1)$ and $\varphi(y_2)$ both lie on the same side of $\varphi(x)$,
the inequality \eqref{eq:rx}
 also holds because in that case 
we  have $|F(\varphi(y_2))-F(\varphi(y_1))|=|\cos \varphi(y_2)-\cos \varphi(y_1) |
=|u_1(y_2)-u_1(y_1)|$.
In all cases, we have proved \eqref{eq:rx}.

Applying  \eqref{eq:rx} 
to any 
$y_1<\cdots < y_M \in (x-r_x,x+r_x)$, we see that
\begin{align*}
\sum_{i=1}^{M-1}|F(\varphi(y_{i+1}))-F(\varphi(y_i))|
\leq \int_{y_1}^{y_M}|u_1'|\, dy\,,
\end{align*}
hence the function $F(\varphi)$ is absolutely continuous on $(x-r_x,x+r_x)$,
so it is differentiable almost everywhere.
Moreover, the inequality \eqref{eq:rx} implies that its derivative
satisfies $|F(\varphi)'|\leq |u_1'|$
 almost everywhere on $(x-r_x,x+r_x)$.
 
 This is true for any $x\in [-1,1]$,
 so we deduce
 that $F(\varphi)$ is absolutely continuous on $[-1,1]$ and
 its derivative satisfies $|F(\varphi)'|\leq |u_1'|$.
 Recalling that $u_1\in H^1([-1,1])$, this implies $F(\varphi)\in H^1([-1,1])$.
We deduce as in Proposition~\ref{p:reg1D}
 that $\varphi\in B^{1/2}_{4,\infty}(-1,1)$,
 and therefore $ u\in B^{1/2}_{4,\infty}([-1,1];\mathbb S^1)$ 
since $u=e^{i\varphi}$ is a Lipschitz function of $\varphi$.
\end{proof}

\begin{proof}[Proof of Theorem~\ref{t:thin}]
Let
 $(u^{(j)})\subset H^1([-1,1]^2;\mathbb S^1)$  
 such that
$E_{\e_j,\delta_j}(u^{(j)})\leq C$ for some $(\e_j,\delta_j)\to (0,0)$.
In light of Jensen's inequality and the $x_2$-periodicity one has
\begin{align*}
E_{\e_j,\delta_j}(u^{(j)})
&
\geq
 \int_{-1}^1 
 \bigg[
 \bigg( \frac{d\bar u^{(j)}_{ 1}}{dx_1} \bigg)^2 
 +\e  \bigg(  \frac{d\bar u^{(j)}_{ 2}}{dx_1}\bigg)^2
 \bigg]\, dx_1,
 \\
 \bar u^{(j)}(x_1)
 &
 := \frac 12\int_{-1}^1 u^{(j)}(x_1,x_2)\, dx_2.
\end{align*}
Thanks to \eqref{eq:thinbesovx1}, 
the averaged maps $\bar u^{(j)}$ are equibounded in 
$B^{1/2}_{3,\infty}([-1,1];\R^2)$.
We can extract a subsequence such that 
$\bar u^{(j)} \to \bar u$ strongly in $L^2$,
and $d\bar u^{(j)}_1/dx_1$ converges weakly in $L^2$.
Then we have $\bar u_1'\in L^2$ and
\begin{align*}
\liminf_{\e\to 0}
E_{\e,\delta}(u_\e) \geq \int_{-1}^1(\bar u_1')^2\, dx.
\end{align*}
Moreover,
Lemma~\ref{l:thin_reg}
 implies that $(u^{(j)})$
is bounded in $B^{1/2}_{3,\infty}([-1,1]^2;\mathbb S^1)$, 
and any weak limit $u$ in that space satisfies
$D^{te_2}u=0$ for all $t\in\R$.
Hence we have
$u=u(x_1)=\bar u(x_1)\in B^{1/2}_{3,\infty}([-1,1];\mathbb S^1)\subset C([-1,1];\mathbb S^1)$.
Further, applying Lemma~\ref{l:1Dcont}  we deduce that $u=\bar u\in B^{1/2}_{4,\infty}([-1,1];\mathbb S^1)$.
This proves
the lower bound part of the $\Gamma$-convergence statement of Theorem~\ref{t:thin},
as well as the equiboundedness and compactness statement.

For the upper bound, since bounded energy limits are one-dimensional, we can directly use the construction from \textsection\ref{s:1D},
concluding the proof of Theorem~\ref{t:thin}.
 \end{proof}

\begin{appendices}

\section{Traces}\label{a:traces}

In this Appendix we sketch the proof of Proposition~\ref{p:tracelim},
adapting directly the arguments in \cite{vasseur01} to our slightly different context.


\begin{proof}[Proof of Proposition~\ref{p:tracelim}]
There are two main steps: 
first, the existence of a weak trace for the indicator function $\chi$, 
and, second, the proof that this weak trace is an indicator function.
This last fact then implies that the trace is strong.

\medskip
\noindent{\it Step 1: Existence of a weak trace.} There exists $\tr(\chi)\in L^\infty(\T\times\partial\Omega)$ 
a weak-$*$ trace of the indicator function $\chi(s,x)=\mathbf 1_{e^{is}\cdot u(x)>0}$, in the following sense.
For any $x_0\in\partial\Omega$ and any $C^1$-diffeomorphism
\begin{align}\label{eq:psi_diffeo}
&\psi \colon V\to U,\quad 0\in V,\; x_0\in U\,,
\\
&\psi^{-1}(U\cap\Omega)=V\cap H_+,\quad H_+=\lbrace x\in\R^2\colon x_2>0\rbrace\,,
\nonumber
\\
&
\overline U\cap\partial\Omega =\overline{U\cap \partial\Omega}\,,
\nonumber
\\
&
\psi^{-1}( U\cap\partial\Omega)= V\cap\partial H_+=V\cap\lbrace x_2=0\rbrace\,,
\nonumber
\end{align}
we have
\begin{align*}
\chi\circ\big(\mathrm{id}_\T\otimes \psi(\cdot,\delta) \big) 
\overset{*}{\rightharpoonup} \tr(\chi)\circ
\big(\mathrm{id}_{\T}\otimes \psi(\cdot,0)\big)\,,
\end{align*}
weakly-$*$ in $L^\infty(\T\times (V\cap\partial H_+))$
as $\delta\to 0$.
\medskip

\noindent
{\it Proof of Step 1.}
Let $\omega_\delta=\psi(V\cap\lbrace x_2>\delta\rbrace)$, $\gamma_\delta=\psi(V\cap\lbrace x_2=\delta\rbrace)$.
First note that
 $u\in B^{1/2}_{3,\infty}(\omega_\delta)$,
hence classical trace properties (see e.g. \cite[\S~3.3.3]{triebel83})
imply that $u$ has a strong trace on $\gamma_\delta$,
and 
therefore $\tilde \chi_\delta:= \chi\circ (\mathrm{id}_\T\otimes \psi(\cdot,\delta)) $ 
can be interpreted as the strong trace
\begin{align*}
\tilde \chi_\delta(s,x_1)=\mathbf 1_{e^{is}\cdot u(\psi(x_1,\delta))>0}\,.
\end{align*}
We also define 
$\chi_\delta\colon\T\times (U\cap\partial\Omega)\to\lbrace 0,1\rbrace$ by setting
\begin{align*}
\chi_\delta(s,\psi(x_1,0))=\tilde\chi_\delta(s,x_1)\qquad\forall (s,x_1)\in\T\times (V\cap\partial H_+)\,.
\end{align*}
This bounded sequence in $L^\infty(\T\times U\cap\partial\Omega)$ admits a weak-$*$ converging subsequence
\begin{align*}
\chi_{\delta_j}\overset{*}{\rightharpoonup }
\chi_0\in L^\infty(\T\times (U\cap \partial \Omega))\,.
\end{align*}
Next
 we show that this weak-$*$ limit $\chi_0$ does not depend on the subsequence $\delta_j\to 0$ nor the choice of the diffeomorphism $\psi$. 
That is, different choices of $\psi$ give the same limit $\chi_0$ almost everywhere on the intersection of their domains.
As a consequence, we can set $\tr(\chi)=\chi_0$ and it is a well-defined weak-$*$ trace, thus proving Step~1.

To check this, note that
thanks to the strong trace property on $\gamma_\delta$,
  we have the Gauss-Green formula
\begin{align*}
\int_{\omega_\delta} \eta \, \dv \Phi(u) \, dx
+\int_{\omega_\delta} \langle \nabla\eta,\Phi(u) \rangle\, dx 
=\int_{\gamma_\delta} \eta\, \langle \nu_\delta,\Phi(\tr_{\gamma_\delta}(u))\rangle\, d\mathcal H^1\,,
\end{align*} 
for all $\eta\in C_c^1(U)$ and $\Phi\in\mathrm{ENT}$,
where $\nu_\delta$ is the exterior unit normal along $\gamma_\delta$, see e.g. \cite{CF99}.
Since $\Phi(u)\in L^\infty(\Omega)$, $\dv\Phi(u)\in L^2(\Omega)$,
and $|U\cap\Omega\setminus \omega_\delta|\to 0$,  we deduce
that the function
\begin{align*}
f_{\eta,\Phi}^\psi(\delta)=
\int_{\gamma_\delta}
\eta\, \langle \nu_\delta,\Phi(\tr_{\gamma_\delta}(u))\rangle\, d\mathcal H^1\,,
\end{align*}
is Cauchy as $\delta\to 0$.
Moreover, if $\psi,\psi'$ are two different diffeomorphisms
whose image contains the support of $\eta$, 
then the Gauss-Green formula applied in the corresponding sets $\omega_\delta$, $\omega'_\delta$ also implies that
\begin{align*}
f_{\eta,\Phi}^\psi(\delta)-f_{\eta,\Psi}^{\psi'}(\delta)\to 0
\quad\text{as }\delta\to 0\,.
\end{align*}
We deduce that there exists $\ell_{\eta,\Phi}\in\R$ such that
\begin{align*}
\int_{\gamma_\delta}\eta\, \langle \nu_\delta,\Phi(\tr_{\gamma_\delta}(u))\rangle\, d\mathcal H^1 \to \ell_{\eta,\Phi}\quad\text{as }\delta\to 0\,,
\end{align*}
for any choice of diffeomorphism $\psi$ whose image contains the support of $\eta$.
Applying this to the entropies $\Phi_g$ given in \eqref{eq:Phi_g},
and using that 
$\psi(\cdot,\delta)\circ\psi(\cdot,0)^{-1}\to\mathrm{id}_{U\cap\partial\Omega}$ in $C^1_{\loc}(U\cap \partial\Omega)$, we infer
\begin{align*}
\int_{\T\times(U\cap\partial\Omega)}
g(s)\eta(x)\chi_\delta(s,x)
\langle e^{is},\nu(x)\rangle\, ds d\mathcal H^1(x)\to\ell_{\eta,\Phi_g}\quad\text{as }\delta\to 0\,,
\end{align*}
for all $\eta\in C_c^1(\R^2)$ and $g\in C^0(\T)$,
%
Recalling the weak star convergence $\chi_{\delta_j}\overset{*}{\rightharpoonup}\chi_0$, this implies that the quantity
\begin{align*}
\int_{\T\times(U\cap \partial\Omega)}g(s)\eta(x)\chi_0(s,x)
\langle e^{is},\nu(x)\rangle\, ds d\mathcal H^1(x)  = \ell_{\eta,\Phi_g}\,,
\end{align*}
is independent of the sequence $\delta_j\to 0$ and of the choice of  $\psi$.
This being true for all $\eta$ and $g$, and since 
$\langle e^{is},\nu(x)\rangle \neq 0$ a.e. in $\T\times\partial\Omega$, 
we deduce that
$\chi_0(s,x)$ is independent of the sequence $\delta_j\to 0$ and that two different choices of diffeomorphisms $\psi$ give the same limit $\chi_0$ on the intersection of their domains.

\medskip
\noindent
{\it Step 2. The weak trace is an indicator function.} 
We have $\chi_0=\tr(\chi)\in\lbrace 0,1\rbrace$ a.e. on 
$\T\times\partial\Omega$.
\medskip

\noindent
{\it Proof that Step 2 implies the conclusion of Proposition~\ref{p:tracelim}.}
With the notations of Step~1, if $\chi_0$ takes values in $\lbrace 0,1\rbrace$, 
then we have $(\chi_0)^2=\chi_0$ and therefore
\begin{align*}
\int_{\T\times (U\cap\partial\Omega)}(\chi_\delta)^2 \, dsd\mathcal H^1
&
=\int_{\T\times (U\cap \partial\Omega)}\chi_\delta \, dsd\mathcal H^1
\\
&
\to
\int_{\T\times(U\cap\partial\Omega)}\chi_0 \, dsd\mathcal H^1 
=\int_{\T\times(U\cap \partial\Omega)}(\chi_0)^2 \, dsd\mathcal H^1\,,
\end{align*}
so that $\chi_\delta\to \chi_0$ strongly in $L^2(\T\times(U\cap\partial\Omega))$,
and this in turn implies that
\begin{align*}
\tr_{\gamma_\delta}(u)(\psi(x_1,\delta))=\frac 12 \int_{\T}\chi_\delta(s,\psi(x_1,0))e^{is}\, ds\,,
\end{align*}
converges strongly in $L^2(V\cap\partial H_+)$.
Covering $\partial\Omega$ 
with coordinate patches and choosing adequate diffeomorphisms $\psi$ concludes the proof of Proposition~\ref{p:tracelim}.
\qed

\medskip
\noindent
{\it Proof of Step~2.}
This is obtained via a blow-up argument in the kinetic equation \eqref{eq:kin}.
The main idea is that, 
when blowing up at a generic boundary point $x_0\in\partial\Omega$, 
the blow-up limit is an indicator function
which
satisfies the free transport equation $e^{is}\cdot\nabla\chi=0$ and has therefore a strong boundary trace.
That strong trace of the blow-up limit can then be shown to coincide with the blow-up limit of the weak trace $\chi_0$, which must therefore take values in $\lbrace 0,1\rbrace$.

To perform the blow-up argument, we fix $x_0\in\partial\Omega$,
and a diffeomorphism $\psi\colon V\to U$ as in \eqref{eq:psi_diffeo}.
Then we define $\tilde\chi\colon\T\times( V\cap H_+)\to\lbrace 0,1\rbrace$ by setting
\begin{align*}
\tilde\chi(s,x)=\chi(s,\psi(x))=\mathbf 1_{e^{is}\cdot \tilde u(x)>0}\,,\quad \tilde u(x)=u(\psi(x))\,,
\end{align*}
and from the kinetic equation \eqref{eq:kin} satisfied by $\chi$ we deduce 
\begin{align*}
[D\psi(x)^{-1}e^{is}]\cdot\nabla_x\tilde\chi =\widetilde \Theta \in L^2(V\cap H_+;\mathcal M(\T))\,.
\end{align*}
For any $x_*\in V\cap \partial H_+$ and $\e>0$ we define
\begin{align*}
\hat\chi_\e(s,\hat x)&
=\tilde\chi(s,x_* +\e\hat x) =\mathbf 1_{e^{is}\cdot \hat u_\e(\hat x)>0}\,,
\quad \hat u_\e(\hat x)=\tilde u(x_*+\e\hat x)\,,
\\
\widehat\Theta(s,\hat x)
&
=\frac{1}{\e}\widetilde\Theta(s,x_*+\e\hat x)\,,
\end{align*}
which satisfy
\begin{align*}
[D\psi(x_*+\e\hat x)^{-1}e^{is}]\cdot\nabla\hat\chi_\e
=\widehat \Theta_\e \,,
\end{align*}
in $\mathcal D'(\T\times (B_2\cap H_+))$ for small $\e>0$.
In order to simplify the expression of $D\psi(x)^{-1}$ we choose coordinates on $U$ in which $\nu(x_0)=-\mathbf e_2$,
 and fix a diffeomorphism $\psi$ in graphical form
\begin{align*}
\psi(x_1,x_2)=(x_1,h(x_1)+x_2)\,,
\end{align*}
for some $h\in C^1(V\cap \lbrace x_2=0\rbrace)$. That way, we have
\begin{align*}
D\psi(x)^{-1}v = (v_1,v_2-h'(x_1)v_1)\quad\forall v\in\R^2\,,
\end{align*}
and can therefore rewrite the above kinetic equation for $\hat\chi_\e$ as
\begin{align}\label{eq:hatchieps_kin}
[D\psi(x_*)^{-1}e^{is}]\cdot\nabla\hat\chi_\e
&
=(\cos(s)\partial_{\hat x_1} +(\sin(s)-h'(x_{*1})\cos(s))\partial_{\hat x_2})\hat\chi_\e
\nonumber
\\
&
=
\widehat \Theta_\e + \partial_{\hat x_2}\hat g_\e\,,
\\
\hat g_\e 
&
=
[ (h'(x_{*1})-h'(x_{*1}+\e\hat x_1))\cos(s) \hat\chi_\e]\,.
\nonumber
\end{align}
The terms in the right-hand side of that kinetic equation satisfy
$\hat g_\e\to 0$ uniformly on $\T\times (B_2\cap H_+)$, and
\begin{align*}
\int_{\T\times (B_2\cap H_+)}|\widehat\Theta_\e(s,\hat x)|\, ds d\hat x
&
=\frac{1}{\e}\int_{\T\times (B_{2\e}(x_*)\cap H_+)}|\widetilde\Theta(s,x)|\, ds dx
\\
&
\lesssim \|\widetilde \Theta\|_{L^2(B_{2\e}(x_*)\cap H_+;\mathcal M(\T))}\to 0\,,
\end{align*}
as $\e\to 0$.
Arguing as in \cite[Proposition~3]{vasseur01}
 relying e.g. on the velocity averaging results of \cite{PS98}, we deduce that
we can extract a (non-relabeled) sequence $\e\to 0$
such that 
\begin{align*}
\hat u_\e(\hat x)=\frac 12 \int_\T \hat\chi_\e(s,\hat x)e^{is}\, ds
\to \hat u_*(\hat x)
\quad\text{in }L^1_{\loc}(B_2\cap H_+)\,,
\end{align*}
for some $\mathbb S^1$-valued map $\hat u_*$.
Letting $\hat\chi_*(s,\hat x)=\mathbf 1_{e^{is}\cdot \hat u_*(\hat x)>0}$, and since 
\begin{align*}
\int_{\T}|\hat \chi_\e(s,\hat x)-\hat \chi_*(s,\hat x)|\, ds \lesssim |\hat u_\e(\hat x)-\hat u_*(\hat x)|\,,
\end{align*}
we deduce also that
\begin{align*}
&
\hat\chi_\e\to\hat\chi_*\quad\text{ in }L^1_{\loc}(\T\times(B_2\cap H_+))\,.
\end{align*}
Moreover, if we consider $\hat\chi_\e$ and $\hat\chi_*$ as extended by 0 in $B_1\setminus H_+$, then we have $\hat\chi_\e\to\hat\chi_*$ in $L^1_{\loc}(\T\times K)$ for any compact set $K\subset B_2\setminus \lbrace x_2=0\rbrace$.
Since $(\hat\chi_\e)$ is weakly compact in $L^2_{\loc}(B_2)$,
we also have $\hat\chi_\e\to\hat\chi_*$ weakly in $L^2_{\loc}(B_2)$.
But we have already seen that this, combined with the fact that $\hat\chi_\e$ and $\hat\chi_*$ take values into $\lbrace 0,1\rbrace$, implies strong convergence. 
Thus we have
\begin{align*}
\mathbf 1_{H_+}\hat\chi_\e \to \mathbf 1_{H_+}\hat\chi_*\quad\text{ in }L^1_{\loc}(\T\times B_2)\,.
\end{align*}
Further, recall that the right-hand side of the kinetic equation \eqref{eq:hatchieps_kin} satisfied by $\hat\chi_\e$ converges to $0$ in distributions, so that its limit $\hat\chi_*$ satisfies the free transport equation
\begin{align*}
[D\psi(x_*)^{-1}e^{is}]\cdot \nabla_{\hat x}\hat\chi_* =0\quad\text{in }\mathcal D'(B_2\cap H_+)\,.
\end{align*}
This implies that, for a.e. $s\in\T$, the function $\hat\chi_*(s,\cdot)$ 
depends only on the variable $\hat x\cdot (iD\psi(x_*)^{-1}e^{is})$ orthogonal to the direction
$D\psi(x_*)^{-1}e^{is}$.
For a.e. $s\in\T$, that variable is transverse to $\lbrace x_2=0\rbrace$, 
so that $\hat \chi_*(s,\cdot)$ has a strong $L^1_{\loc}$ trace onto $B_2\cap \lbrace x_2=0\rbrace$.
By dominated convergence,
we deduce that
 $\hat\chi_*$ 
has a local strong $\lbrace 0,1\rbrace$-valued trace $\tr(\hat\chi_*)$
on $\T\times (B_2\cap \lbrace \hat x_2=0\rbrace)$,
and in particular
\begin{align*}
\esslim_{\delta\to 0}\int_{\T\times [-1,1]}
\!\!\!
|\hat\chi_*(s,(\hat x_1,\delta))-\tr(\hat\chi_*)(s,(\hat x_1,0))|\, dsd\hat x_1 =0
\end{align*}
Finally we make the link between this trace of $\hat\chi_*$ and the weak trace $\chi_0$, 
by  considering, for any $\zeta\in C_c^1(\T\times (-1,1))$, the function 
\begin{align*}
\rho_\e(\hat x_2)
&
=\int_{\T\times [-1,1]}
\!\!\!
(\hat a_\e(s,\hat x)\hat\chi_\e(s,\hat x)-
\hat a_*(s)\hat\chi_*(s,\hat x))\zeta(s,\hat x_1)\, ds d\hat x_1\,,
\\
\hat a_\e(s,\hat x)
&
=\sin(s)-h(x_{*1}+\e\hat x_1)\cos(s),\quad
\hat a_*(s)=\sin(s)-h(x_{*1})\cos(s)\,,
\end{align*}
defined for $0<\hat x_2\leq 1$.
The weak trace property established in Step~1 ensures
\begin{align*}
&
\rho_\e(0^+)=
\lim_{\hat x_2\to 0^+}\rho_\e(\hat x_2)
\\
&
= \int_{\T\times [-1,1]}
\!\!\!
(\hat a_\e(s,\hat x)\tilde\chi_0(s,x_* +\e \hat x_1)-
\hat a_*(s)\hat\chi_*(s,\hat x_1))\zeta(s,\hat x_1)\, ds d\hat x_1\,,
\end{align*}
 where $\tilde\chi_0(s,x)=\chi_0(s,\psi(x))$.
Moreover, the kinetic equation \eqref{eq:hatchieps_kin} ensures that $\rho_\e$ is weakly differentiable, with
\begin{align*}
\partial_{\hat x_2}\rho_\e (\hat x_2)
&
=\int_{\T\times [-1,1]}
\!\!\!
\zeta(s,\hat x_1)\, \widehat\Theta_\e(s,\hat x)\,ds\, d\hat x_1 
\\
&\quad
+\int_{\T\times [-1,1]}
\!\!\!
\cos(s)(\hat\chi_\e(s,\hat x)-\mathrm{tr}(\hat\chi_*)(s,\hat x))\partial_{\hat x_1}\zeta(s,\hat x_1)\, ds d\hat x_1\,,
\end{align*}
hence
\begin{align*}
\|\rho_\e\|_{L^\infty(0,1)}
&
\lesssim \int_0^1 |\rho_\e|\,d \hat x_2 +\int_0^1 |\partial_{\hat x_2}\rho_\e|\, d\hat x_2
\\
&
\lesssim 
\|\zeta\|_\infty \|\hat a_\e -\hat a_*\|_{L^1(\T\times (B_2\cap H_+))}
\\
&
\quad
+\|\zeta\|_\infty \|\hat a_*\|_{\infty}
\|\hat\chi_\e-\hat \chi_*\|_{L^1(\T\times (B_2\cap H_+))} 
\\
&
\quad
+\|\zeta\|_\infty \|\widehat\Theta_\e\|_{L^1(\T\times (B_2\cap H_+))}
\\
&
\quad
+\|\partial_{\hat x_1}\zeta\|_\infty
\|\hat\chi_\e-\hat \chi_*\|_{L^1(\T\times (B_2\cap H_+))} \,.
\end{align*}
We infer that $\rho_\e\to 0$ uniformly on $(0,1)$ as $\e\to 0$.
Given the above expression of $\rho_\e(0^+)$ and the uniform convergence $\hat a_\e\to\hat a_*$, this implies
\begin{align*}
\lim_{\e\to 0}\int_{\T\times [-1,1]}\hat a_*(s) (\tilde\chi_0(s,x_*+\e\hat x_1)-\mathrm{tr}(\hat\chi_*)(s,\hat x_1))\zeta(s,\hat x_1)\, ds d\hat x_1 =0\,,
\end{align*}
for all $\zeta\in C_c^1(\T\times (-1,1))$.
Recall that this is valid for any $x_*\in V\cap H_+$.
The blow-up function $\hat\chi_*$ depends on $x_*$ and on the sequence $\e\to 0$, and its strong trace $\mathrm{tr}(\hat\chi_*)$ takes values in $\lbrace 0,1\rbrace$.
Further, for almost every choice of $x_*\in V\cap H_+$ we 
have
\begin{align*}
\int_{\T\times [-1,1]}|\tilde\chi_0(s,x_*+\e\hat x_1)-\tilde\chi_0(s,x_*)|\, ds d\hat x_1 \to 0\,,
\end{align*}
as $\e\to 0$. 
(This can be interpreted as almost every $x_*$ being a Lebesgue point of $x\mapsto \chi_0(\cdot,x)\in L^1(\T)$, and proved in the same way as the usual Lebesgue differentiation theorem. See also \cite[Lemma~3]{vasseur01} for a proof of this fact up to extracting a subsequence, which is sufficient for our purpose here.)
Combining this with the above and the fact that $\hat a_*(s)=\sin(s)-h(x_{*1})\cos(s) \neq 0$ for a.e. $s\in\T$, we conclude that
$\tilde\chi_0(s,x_*)\in\lbrace 0,1\rbrace$ for a.e. $(s,x_*)\in \T\times (V\cap\partial H_+)$,
hence $\chi_0=\tilde\chi_0\circ (\mathrm{id}_{\T}\otimes \psi(\cdot,0)^{-1})$ is an indicator function on $\T\times (\partial\Omega\cap U)$.
\end{proof}

\section{A symmetry question}\label{a:sym}


In the  non-thin periodic case,  
is it true that any minimizer $u$ of $E_\e$ in $[-1,1]^2$
subject to the boundary conditions $u(\pm 1,x_2)=e_\pm$ and $u(x_1,-1)=u(x_1,+1)$,
depends only on $x_1$ ?
In this Appendix we answer this question affirmatively.

Denote by 
$u^{\mathrm{1D}}_\e(x_1) =e^{i\varphi_\e^{\mathrm{1D}}(x_1)}$ 
the one-dimensional minimizer of the energy $E_\e^{\mathrm{1D}}$ restricted to maps which depend only on $x_1$, 
and $\mathfrak e_\e^{\mathrm{1D}}$ the value of its energy.
Denote also by $\varphi_\pm$ the boundary values of the phase $\varphi_\e^{\mathrm{1D}}$, which are such that $e_\pm =e^{i\varphi_\pm}$. 
Since
\begin{align*}
E_\e^{\mathrm{1D}}(e^{i\varphi})
&
=\int_{-1}^1
(\sin^2\varphi +\e \cos^2\varphi)(\varphi')^2\, dx
=\int_{-1}^1 \left([F_\e(\varphi)]'\right)^2\, dx,
\\
\text{where }
F_\e(t)
&
=\int_0^t \sqrt{\sin^2 s +\e\cos^2 s}\, ds,
\end{align*}
we have that $\varphi_\e^{\mathrm{1D}}$ solves
\begin{align*}
\frac{d^2}{dx_1^2}\big[ F_\e(\varphi_\e^{\mathrm{1D}})\big] =0\,.
\end{align*}
It follows that the derivative of $F_\e(\varphi_\e^{\mathrm{1D}})$ with respect to $x_1$ is constant, and 
integrating on $(-1,1)$ we find the value of this constant:
\begin{align*}
\frac{d}{dx_1}\big[ F_\e(\varphi_\e^{\mathrm{1D}})\big] =\frac{F_\e(\varphi_+)-F_\e(\varphi_-)}{2}\,.
\end{align*}
Hence the value of the minimal 1D energy is
\begin{align*}
\mathfrak e_\e^{\mathrm{1D}} &
=  \int_{-1}^1 \left(\frac{d}{dx_1}\big[ F_\e(\varphi_\e^{1D})\big]\right)^2\, dx_1
=\frac 12 \big(F_\e(\varphi_+)-F_\e(\varphi_-)\big)^2\,,
\end{align*}
and $\varphi_\e^{\mathrm{1D}}$ solves
\begin{align*}
\frac{d\varphi_\e^{\mathrm{1D}}}{dx_1}
=\frac{\tau\sqrt{\mathfrak e_\e^{\mathrm{1D}}/2}}{F_\e'(\varphi_\e^{\mathrm{1D}})}\,,
\qquad\tau =\sign(\varphi_+-\varphi_-)\,.
\end{align*}
Guided by this equation,
we use the fact that 
$F_\e'(\varphi)^2=\sin^2\varphi +\e\cos^2\varphi$
 to rewrite the energy of any two-dimensional configuration $u=e^{i\varphi}$ with $\varphi(\pm 1,x_2)=\varphi_\pm$, as
\begin{align*}
2 E_\e(e^{i\varphi})
&
= \iint 
\Big(-\sin\varphi\, 
\Big(\partial_1\varphi -
\frac{\tau\sqrt{\mathfrak e_{\e}^{\mathrm{1D}}/2}}{F_\e'(\varphi)} 
+
\frac{\tau\sqrt{\mathfrak e_{\e}^{\mathrm{1D}}/2}}{F_\e'(\varphi)} 
\Big)
+\cos\varphi\,\partial_2\varphi
\Big)^2
\\
&\hspace{3em}
+\e
\Big(\cos\varphi\,
\Big(\partial_1\varphi -
\frac{\tau\sqrt{\mathfrak e_{\e}^{\mathrm{1D}}/2}}
{F_\e'(\varphi)}
+
\frac{\tau\sqrt{\mathfrak e_{\e}^{\mathrm{1D}}/2}}
{F_\e'(\varphi)}
\Big)
+\sin\varphi\,\partial_2\varphi
\Big)^2\, dx
\\
&
=
\iint
F_\e'(\varphi)^2
\Big(\partial_1\varphi -
\frac{\tau\sqrt{\mathfrak e_{\e}^{\mathrm{1D}}/2}}
{F_\e'(\varphi)}
+
\frac{\tau\sqrt{\mathfrak e_{\e}^{\mathrm{1D}}/2}}
{F_\e'(\varphi)}
\Big)^2
\\
&
\hspace{3em}
+(\cos^2\varphi +\e\sin^2\varphi)(\partial_2\varphi)^2
\\
&
\hspace{3em}
+2(\e-1)\cos\varphi\,
\sin\varphi\,
\Big(\partial_1\varphi -
\frac{\tau\sqrt{\mathfrak e_{\e}^{\mathrm{1D}}/2}}
{F_\e'(\varphi)}
+
\frac{\tau\sqrt{\mathfrak e_{\e}^{\mathrm{1D}}/2}}
{F_\e'(\varphi)}
\Big)\partial_2\varphi
\,\, dx\,.
\end{align*}
Expanding the integrand gives
\begin{align*}
&
2\big(E_\e(e^{i\varphi})-\mathfrak e_\e^{\mathrm{1D}}\big)
\\
&
=
\iint
F_\e'(\varphi)^2\Big(\partial_1\varphi 
-\frac{\tau\sqrt{\mathfrak e_\e^{\mathrm{1D}}/2}}{ F_\e'(\varphi)}\Big)^2
+(\cos^2\varphi +\e\sin^2\varphi)(\partial_2\varphi)^2
\\
&
\hspace{3em}
+2(\e-1)\cos\varphi\,
\sin\varphi\,
\Big(\partial_1\varphi -
\frac{\tau\sqrt{\mathfrak e_{\e}^{\mathrm{1D}}/2}}
{F_\e'(\varphi)}
\Big)\partial_2\varphi
\\
&\hspace{3em}
+
2\tau \sqrt{\mathfrak e_\e^{\mathrm{1D}}/2}\Big( F_\e'(\varphi)\partial_1\varphi 
-\tau\sqrt{\mathfrak e_\e^{\mathrm{1D}}/2}\Big)
\\
&
\hspace{3em}
+
2(\e - 1) 
\tau\sqrt{\mathfrak e_{\e}^{\mathrm{1D}}/2}
\frac{\cos\varphi\,\sin\varphi}{F_\e'(\varphi)}\partial_2\varphi
\,\, dx\,.
\end{align*}
The last line integrates to zero by $x_2$-periodicity,
and so does the next-to-last line, because
\begin{align*}
\int_{-1}^1 F_\e'(\varphi)\partial_1\varphi\, dx_1
=\int_{-1}^1 \partial_1 [F_\e(\varphi)]\, dx_1
=F_\e(\varphi_+)-F_\e(\varphi_-) =\tau\sqrt{2\mathfrak e_\e^{\mathrm{1D}}}\,.
\end{align*}
Thus we obtain, recalling also that $F_\e'(t)^2=\sin^2 t +\e\cos^2 t$,
\begin{align*}
&
2\big(E_\e(e^{i\varphi})-\mathfrak e_\e^{\mathrm{1D}}\big)
\\
&
=
\iint
(\sin^2\varphi +\e\cos^2\varphi)
\Big( \partial_1\varphi 
-\frac{\tau\sqrt{\mathfrak e_\e^{\mathrm{1D}}/2}}{F_\e'(\varphi)}\Big)^2
+(\cos^2\varphi +\e\sin^2\varphi)(\partial_2\varphi)^2
\\
&
\hspace{3em}
+2(\e-1)\cos\varphi\,
\sin\varphi\,
\Big(\partial_1\varphi -
\frac{\tau\sqrt{\mathfrak e_{\e}^{\mathrm{1D}}/2}}
{F_\e'(\varphi)}
\Big)\partial_2\varphi
\,\, dx
\\
&
=
\iint
\Big(\frac{1+\e}{2}-\frac{1-\e}{2}\cos(2\varphi)\Big)
\Big( \partial_1\varphi 
-\frac{\tau\sqrt{\mathfrak e_\e^{\mathrm{1D}}/2}}{F_\e'(\varphi)}\Big)^2
\\
&
\hspace{3em}
+\Big(\frac{1+\e}{2}+\frac{1-\e}{2}\cos(2\varphi)\Big)(\partial_2\varphi)^2
\\
&
\hspace{3em}
+(\e-1)
\sin(2\varphi)\,
\Big(\partial_1\varphi -
\frac{\tau\sqrt{\mathfrak e_{\e}^{\mathrm{1D}}/2}}
{F_\e'(\varphi)}
\Big)\partial_2\varphi
\,\, dx
\end{align*}
For any $C,S\in\R$ such that $C^2+S^2=1$, 
the quadratic form on $\R^2$ given by
\begin{align*}
q(X,Y)
&
=
((1+\e)-(1-\e)C)X^2
\\
&\quad
+((1+\e)+(1-\e)C)Y^2
\\
&\quad
+2(\e-1)SXY\,,
\end{align*}
has determinant $\det( q) = 4\e$ and trace $\tr (q) = 2(1+\e)$, hence it satisfies
\begin{align*}
q(X,Y)\geq 2\min(1,\e) (X^2+Y^2)\,.
\end{align*}
Applying this to the integrand in the above expression of the energy difference,
we deduce that
\begin{align*}
&
E_\e(e^{i\varphi})-\mathfrak e_\e^{\mathrm{1D}}
\geq \frac{\min(1,\e)}{2}
\iint
(\partial_2\varphi)^2
+
\bigg( \partial_1\varphi 
-\frac{\tau\sqrt{\mathfrak e_\e^{\mathrm{1D}}/2}}{F_\e'(\varphi)}\bigg)^2
\, dx\,,
\end{align*}
for any  $H^1$ function $\varphi$ 
satisfying the boundary conditions 
$\varphi(\pm 1,x_2)=\varphi_\pm$
and
$\varphi(x_1,-1)=\varphi(x_1,+1)$. The assertion made in this appendix is thus proved.

\end{appendices}

\small

\bibliographystyle{acm}
\bibliography{high_anis}

\end{document}